\title[Response Solution for dissipative systems]
{Response solutions to quasi-periodically forced systems, even to possibly ill-posed PDEs, with strong dissipation and any frequency vectors}
\author[F.Wang]{Fenfen Wang}
\address{
	School of Mathematics, Shandong University,
	Jinan, Shandong 250100, P.R.China.
}
\email{ffenwang@hotmail.com}
\thanks{F.W. is supported by CSC  by the National Natural Science
	Foundation of China (Grant Nos. 11171185,10871117), F.W.
	thanks G.T. for hospitality 2018-2020}
\author[R.de la Llave]{Rafael de la Llave}
\address{School of Mathematics, Georgia Inst. of Technology,
	Atlanta GA, 30332, USA}
\email{rafael.delallave@math.gatech.edu}
\thanks{R. L. is supported in part by NSF grant DMS 1800241. }
\thanks{ This material is based upon work supported by the National Science Foundation under Grant No. DMS-1440140 at MSRI during Fall 2018.}
\date{\today}
\newtheorem{theorem}{Theorem}
\newtheorem{meta-thm}[theorem]{Meta-Theorem}
\newtheorem{lemma}[theorem]{Lemma}
\newtheorem{proposition}[theorem]{Proposition}
\newtheorem{remark}[theorem]{Remark}
\newtheorem{definition}[theorem]{Definition}
\numberwithin{equation}{section}
\begin{document}

\maketitle

\begin{abstract}

We consider several models (including
both multidimensional ordinary differential equations (ODEs) and partial differential
equations (PDEs), possibly ill-posed), subject to very strong damping
and quasi-periodic external forcing. We study the existence of
response solutions (i.e., quasi-periodic solutions with the same
frequency as the forcing). Under some regularity assumptions on the
nonlinearity and forcing, without
any arithmetic condition on the forcing frequency $\omega$, we show
that the response solutions indeed exist. Moreover, the solutions we obtained possess optimal regularity in
$\varepsilon$ (where $\varepsilon$ is the inverse of the coefficients
multiplying the damping) when we consider $\varepsilon$ in a domain that does not include the
origin $\varepsilon=0$ but has the origin on its boundary. We get that the response solutions depend continuously on
$\varepsilon$ when we consider $\varepsilon $ tends to $0$.  However, in general, they may not be differentiable at $\varepsilon=0$. In this paper, we allow
multidimensional systems and we do not require that the unperturbed
equations under consideration are Hamiltonian.
One advantage of the  method in the present paper is that it gives results for  analytic,
finitely differentiable and low regularity forcing and nonlinearity, respectively. As a matter of fact,
we do not even need that the forcing is continuous. Notably, we obtain
results when the forcing is in $L^2$ space and the nonlinearity is just
Lipschitz as well as in the case that the forcing is in $H^1$ space and the
nonlinearity is $C^{1 + \text{Lip}}$.
In the proof of our results, we reformulate the existence of
response solutions as a fixed point problem in appropriate 
spaces of smooth functions. 
\end{abstract}

\textbf{Keywords.} Strong dissipation;  Response  solutions; Singular perturbations.

\textbf{2010 Mathematics Subject Classification.} 35R25, 37L10, 35Q56, 34D35, 37L25.

\section{Introduction}\label{sec:intro}

In recent times, there has been much interest in the study of response
solutions for nonlinear mechanical models subject to strong
dissipation and quasi-periodic external forcing. We recall that response solutions are solutions with the same frequency as the forcing. The mechanical systems are
second order equations. Since the large coefficients of
dissipation 
are factors of terms involving the first  derivative, 
this is a singular perturbation.
\medskip

We are interested
in finding response solutions for two kinds of equations. We first
consider an ODE model of the form:
\begin{equation}\label{000.0}
\begin{split}
 x_{tt}+\frac{1}{\varepsilon} x_t+ g(x)= f(\omega t),\,\, x\in \mathbb{R}^n.
\end{split}
\end{equation}
The equation \eqref{000.0} is referred as \emph{``varactor''} equations in the literature \cite{Rafael13,Gentile10,  Gentile14,Guido17,Gentile17}.
\medskip

We also consider PDE models. One particular example is obtained from the Boussinesq equation (derived in the paper \cite{Bou72}) by adding a singular friction
proportional to the velocity:
\begin{equation}\label{b-e}
\begin{split}
u_{tt}+\frac{1}{\varepsilon}u_t-\beta u_{xxxx}-u_{xx}=(u^{2})_{xx}+f(\omega t,x), \,\,x\in\mathbb{T}=\mathbb{R}/2 \pi \mathbb{Z},\,\,
 \beta>0,
\end{split}
\end{equation}
where $\beta>0$ is a  parameter.  Of course, the equation \eqref{b-e} will be supplemented with periodic boundary conditions.
We note that the positive sign of $\beta$ makes equation \eqref{b-e} ill-posed. That is, there are many initial conditions that do not lead to solutions.  It is, however,
possible that there is a systematic way to construct many special solutions, for some ill-posed Boussinesq equations, which are physically observed (we refer to the papers \cite{Rafael16, Llave09, Honngyu1, Honngyu2}). 
\medskip

In both equations \eqref{000.0} and \eqref{b-e}, $\varepsilon $ is a
small parameter in $\mathbb{R}$ and $\omega \in \mathbb{R}^d$ with
$d\in \mathbb{N}_{+}:=\mathbb{N}\setminus\{0\}$. The forcing $f$ is quasi-periodic with respect
to time $t$. Note that in the PDE \eqref{b-e}, the forcing may depend on the
space variable. At this moment, we think of the forcing as a quasi-periodic function
taking values in a space of functions.  

In equation \eqref{000.0},
one considers the nonlinearity $g$ as a function
from $\mathbb{R}^n$ to $\mathbb{R}^n$ with $n\in \mathbb{N}_{+}$
and the forcing $f$ as a function from $ \mathbb{T}^d$ to $
\mathbb{R}^n$.  We will obtain several results depending on the
regularity assumed for $f$ and $g$. First, we will consider that the functions $f$ and $g$ are
real analytic such that they take real values for real arguments,
which are what appears in physical applications, with $\varepsilon \in
\mathbb{R}$.  We will also consider highly differentiable
functions $f$ and $g$, such as $f\in H^{m}(m>\frac{d}{2})$ and $g$ is
$C^{m+l},\,l=1,2,\cdots$.  In addition, we will obtain results for
rather irregular functions $f$ and $g$. For example, the forcing $f$ is in the
$L^2$ space, the nonlinearity $g$ is just Lipschitz or $f$ is in the
$H^1$ space, $g$ is $C^{1 + \text{Lip}}$.

In equation \eqref{b-e}, we consider the function $f:\mathbb{T}^d\times \mathbb{T}\rightarrow
\mathbb{R}$. Analogously to the case of
\eqref{000.0}, we will present results for $f$ being real analytic and
finitely differentiable with high regularity.  
Note that in the study of the PDE model \eqref{b-e}, we will just focus in the physically relevant case of 
a specific nonlinearity $(u^{2})_{xx}$.
It is possible to discuss general nonlinearities in a regularity class, but being
unaware of a physical motivation, we leave these generalizations to the readers. 
We emphasize that, in \eqref{b-e}, the nonlinearity $(u^{2})_{xx}$  is unbounded from 
one space to itself, but the fixed point problem we consider 
overcomes  this problem since there will be smoothing factors.  

From the point of physical view, the parameter $\varepsilon$ is real. However, it is natural to consider $\varepsilon$ in a complex domain when we consider our problem in an analytic setting.  It is important to notice that the complex domain we use does not include the origin but
accumulates on it. Indeed,  the solutions fail to be
differentiable at $\varepsilon = 0$ in the generality
considered in the present paper (see Remark~\ref{noo-de}). However,  we will show that the response solutions depend continuously on
$\varepsilon$ as $\varepsilon$ tends to $0$.
\subsection{Some remarks on the literature}
The problem of the response solutions for dissipative systems has been
studied by several methods. One method is based on developing
asymptotic series and then show that they can be resummed using
combinatorial arguments, which are established using the so-called
\emph{``tree formalism''}. This can be found in the literature
\cite{Gentile05, Gentile06,GG10,GGO10}. Recent papers developing this method  are \cite{Guido17,Gentile17}. We point out that one important novelty of
the papers \cite{Guido17, Gentile17} is that no arithmetic condition
is required in the frequency of the forcing. A later
method is to reduce the existence of response solutions to a fixed
point problem, which is analyzed in a ball in an appropriate Banach
space, centered in the solution predicted by the asymptotic
expansion. In this direction, we refer to \cite{Rafael13,Rafael17} and
references there. Note that  the papers
\cite{Rafael13,Rafael17} considered the perturbative expansion to
low orders on $\varepsilon$ and obtains a reasonably approximate
solutions in a neighborhood of $\varepsilon=0$.  Nevertheless, to
obtain the asymptotic expansions, one needs to solve equations
involving small divisors and assume some non-degeneracy
conditions. Note that the small divisors assumed in \cite{Rafael13,Rafael17} are weaker than the Diophantine conditions in KAM theory. In this paper, we will not assume any small divisors conditions since we do not attempt to get
the approximate solution through an asymptotic expansion.

Since the literature is growing, it is interesting to compare
systematically results.  There are several figures of merit for
results on the existence of response solutions.
 \begin{enumerate}
 \item 
The arithmetic properties required in the external forcing
frequency, such as Diophantine condition, Bryuno condition, or
even weaker conditions, etc.

 \item   
The analyticity domain in $\varepsilon$ established. Since we do not expect that the asymptotic series converges, this
domain does not include a ball centered at the origin. Note that the
shape of this analyticity domain is very important to study properties 
of the asymptotic series. For example, Borel summability in  \cite{Gentile05,Gentile06}.
In the generality we consider in this paper, the solutions we construct fail to be even differentiable
at the origin $\varepsilon=0$. (See Remark~\ref{noo-de}). 
\item 
Whether the method gives  some asymptotic expansions for the solutions. 
\item 
Whether the method can deal with the forcing function $f$ 
which has low regularity 
(e.g. $f\in L^2$ or $f\in H^1$) and the nonlinearity function $g$ of 
low regularity (the case of piecewise differentiable functions 
appears in some applications). 
\item 
The generality of the models considered (e.g. whether the method requires that the system is 
Hamiltonian, Reversible, etc.)  
\item 
Smallness conditions imposed on functions $f$ and $g$.
\end{enumerate}

Notice that all these figures of merit cannot be accomplished at the
same time. Obtaining more conclusions on the solutions (e.g. the
existence of asymptotic expansions) will require more regularity and
some arithmetic conditions on the frequency. 
\subsection{The method in the present paper}
 From the
strictly logical point of view,  our paper and
\cite{Guido17,Gentile17} are completely different even if they are
motivated by the same physical problem for the model \eqref{000.0}. More precisely,
the present paper deals with not only  analytic problems but also 
finitely differentiable problems and even just Lipschitz problems by the method of fixed point theorem. In contrast, the papers  \cite{Guido17,Gentile17} apply resummation methods to
establish the existence of response solutions under analytic condition.  
 
In the multidimensional case in equation \eqref{000.0}, compared with  \cite{Guido17}, the methods presented in this paper do not
need that the oscillators without dissipation  are Hamiltonian or
that the linearization of $g$ at the origin (i.e. $Dg(0)$, which is a $n\times n$ matrix) is positive definite. Further, we
do not assume that the matrix  $Dg(0)$ is diagonalizable or symmetric. We allow Jordan blocks that appear naturally in problems at resonance \cite{gazzo15,gazz15}.

However,  we note that our method for analytic case involves smallness assumptions in the forcing $f$ but not in the nonlinear part  $\hat{g}$ of $g$. In the case of $L^2$ and $H^1$, we involve just smallness assumptions on $\hat{g}$ but not $f$. For the highly differentiable case (i.e. $H^{m},\,m>\frac{d}{2}$), we choose either smallness assumption for $f$ or $\hat{g}$. (See Section~\ref{sec:small}).

 As a further application, we consider adding dissipative terms 
to the Boussinesq equation of water waves
in \eqref{b-e}. We note that the equation \eqref{b-e}
is ill-posed and not all initial conditions lead to solutions. 
Nevertheless, we construct special solutions which are response.

The approach followed in \cite{Rafael13, Rafael17} for similar problems \eqref{b-e}
has two steps. In the first step, one constructed series expansions in $\varepsilon$
that produced approximate solutions. In a second step, one used 
a contraction mapping principle for an operator defined in a
small ball near the approximate solutions obtained in the first 
step.  Of course, this approach requires  a very careful choice of 
the spaces in which the approximate solutions lie 
and  the fixed point problems are formulated.  One important consideration is that the spaces are chosen
such that the operators involved map the spaces into themselves. Since some of
the operators involved are diagonal in Fourier series, it is important that the
norms can be read off from the Fourier coefficients. It will also be convenient that we have
Banach algebras properties and that the nonlinear composition operators can be readily estimated. We have to say that it is the idea in \cite{Rafael13, Rafael17} that inspires our present treatment for the
equations \eqref{000.0} and \eqref{b-e}.

To motivate the procedure adopted in this paper, we note that in the method
of \cite{Rafael13,Rafael17}, the fixed point part does not depend on
any arithmetic condition on the forcing frequency. 
 We will modify slightly the fixed
point part to get response solutions with some regularity for our model \eqref{000.0}.
In this way, we first reformulate the existence of response solutions for equation \eqref{000.0} as a fixed point problem. Then, under certain regularity assumptions for the
nonlinearity and  the forcing, we obtain the response solutions with  corresponding regularity
 on $\varepsilon$ when $\varepsilon$ ranges over
an appropriate domain without any circle centered at the origin $\varepsilon=0$.  It is quite 
possible that the response solutions constructed are not differentiable 
with respect to $\varepsilon$ at $\varepsilon  = 0$ (see Remark~\ref{noo-de}) since we do not assume any Diophantine conditions for the frequency $\omega$. Therefore,  when we consider 
$\varepsilon$ goes to $0$, we just get the response solutions depend continuously on $\varepsilon$.

The method of the proof in  this paper (very different from 
resumming expansions)  consists in transforming the
original equations \eqref{000.0} and \eqref{b-e} into fixed point
equations (see \eqref{fixeq3} and \eqref{plo1}, respectively). The main observation that allows us to solve the fixed point equations is that we are allowed to use the strong
dissipation in the contraction mapping principle.

Our  method also works for 
finitely differentiable problems. 
In such case, we will introduce Sobolev spaces, in which the norms of functions  are measured by size of the Fourier coefficients.

We think that the regularity results obtained in this paper are close to optimal. 
As for the optimality for the domain, 
we find that there exist arbitrarily
small values of $\varepsilon$ for which the map we constructed is not
a contraction and the method of the proof breaks down. Therefore, we conjecture  
that this is optimal and that indeed, regular solutions do not exist for 
these small parameter values and  general forcing and nonlinearity.
We also show in Remark~\ref{noo-de} that, both in the analytic and 
in the finitely differentiable case, there are examples in which the 
solution is not differentiable in $\varepsilon$ at $\varepsilon = 0$ when we remove the Diophantine condition on the forcing frequency $\omega$.

The lack of differentiability at $\varepsilon = 0$ 
is a reflection of the problem being a singular
perturbation. In the case considered here that there are no
non-resonance conditions on the frequency,  the problem is 
more severe than in previously considered cases. 
\subsection{Some possible generalization}
Our method could deal easily with the general case with the form of 
\begin{equation}\label{general}
\begin{split}
\mathbf{p}x_{tt}+\frac{1}{\varepsilon} \mathbf{q}x_t+ g(x,\omega t)= f(\omega t),\quad x\in \mathbb{R}^n,
\end{split}
\end{equation}
where $\mathbf{p},\,\mathbf{q}$ are diagonal constant matrix and $g(x,\omega
t)=Ax+\hat{g}(x,\omega t)$, where $A$ is a matrix in Jordan Block form and $\hat{g}(x,\omega
t)\,:\mathbb{R}^n\times \mathbb{T}^d\rightarrow \mathbb{R}^n$ is
sufficiently regular. We leave the easy details to the interested
readers. See Remark~\ref{in-jordan}, which gives some simplified calculations  after we have carried out the case in \eqref{000.0}.

\subsection{Organization of this paper}
Our paper is organized as follows: In Section~\ref{sec:formulation},
we  present the idea of reformulating the existence of response solutions for equation \eqref{000.0} as a fixed point problem. To solve this fixed point equation, in Section~\ref{sec:spacesode}, we 
 give the precise function spaces that we work in and we
list their important properties, such as Banach algebra properties and
the regularity of the composition operators. We state our three main results: analytic case,
highly differentiable case and low regularity in Section
~\ref{sec:statement}.   Section
~\ref{sec:analytic} is mainly devoted to  the proof of our analytic
result by contraction mapping principle. In the process, we need to
pay more attention to the invertibility of operators and regularity of
composition operators. In Section~\ref{sec:finitely}, we prove our regular
result in  the finitely differentiable case by the contraction argument and the implicit function theorem.
Section~\ref{sec:pde} is an application  to the ill-posed PDE
\eqref{b-e} by a similar idea to used for ODE \eqref{000.0}.

\section{The formulation for  equation \eqref{000.0}}\label{sec:formulation} 
 
In this section, we give an 
overview of our treatment for  ODE model \eqref{000.0}, which can be rewritten as
\begin{equation}\label{000.1}
\begin{split}
\varepsilon x_{tt}+x_t+\varepsilon g(x)=\varepsilon f(\omega t),\,\, x\in \mathbb{R}^n,
\end{split}
\end{equation}
where, as indicated before, the mappings $g:\mathbb{R}^n\rightarrow
\mathbb{R}^n,\,\,f:\mathbb{T}^d\rightarrow \mathbb{R}^n$.  We will reduce the existence of response solutions of equation \eqref{000.1}
to an equivalent fixed point problem. To this end, it is crucial to  make some assumptions for equation \eqref{000.1}.
\subsection{Preliminaries}
For the analytic and highly
differentiable functions $f$ and $g$ defining the equation
\eqref{000.1}, we assume that:
\medskip 

$\mathbf{H}$: The  average of $f$ is $0$ and $g(0)=0$.  Denote $A=Dg(0)$, which  is a $n\times n$ matrix, the spectrum $\lambda_j\,(j=1,\cdots,n)$ of $A$ is real and $\lambda_j\neq 0$.  
\medskip

Actually, we could weaken the assumptions on the regularity of the function
$g$ when considering low regularity results (e.g, $L^2$ or $H^1$). As we will see in Section~\ref{sec:lowregularity}, instead of assuming $g$
is differentiable, we just assume that:
\medskip 

$\mathbf{\widetilde{H}}$:
$g$ is Lipschitz in $\mathbb{R}^n$ and it can be expressed in the form of 
\begin{equation*}
g(x)=Ax+\hat{g}(x),
\end{equation*}
where $A$ is a $n\times n$ matrix and its spectrum is real and
nonzero. Moreover, the nonlinear part $\hat{g}$ satisfies that
$\text{Lip}(\hat{g})\ll 1$ in the whole of $\mathbb{R}^n$. 
\medskip

Note that in both assumptions $\mathbf{H}$ and $\mathbf{\widetilde{H}}$, we are not including that the matrix $A$ is
diagonalizable. Non-diagonalizable matrices appear naturally when
considering oscillators at resonance, which is often a design goal in
several applications in electronics or appear in mechanical systems with several nodes.

We emphasize that the assumption 
$\mathbf{\widetilde{H}}$ involves assumptions on $\hat g$ for all 
values of its argument. This is needed when we consider solutions in 
$L^2(\mathbb{T}^d)$ which may be unbounded.

It is important to note that, once we have established the conclusion
 for $g$ under the assumption $\mathbf{\widetilde{H}}$, we can
 accommodate several physical situations such as piecewise linear
 nonlinearity with small breaks.
\medskip 

Without loss of generality, we  assume that 
\begin{equation}\label{non-res}
\omega \cdot k\neq 0,\,\forall k\in \mathbb{Z}^d\setminus\{0\}.
\end{equation}
Indeed, if there is a  $k_0\in \mathbb{Z}^d\setminus\{0\}$ such that $\omega \cdot k_0=0$, we could reformulate the forcing with only $(d-1)-$dimensional variables which are orthogonal to $k_0$. Namely, the map $f:\,\mathbb{T}^{d-1}\rightarrow \mathbb{R}^n$.

The condition \eqref{non-res}  is called the
$\emph{``non-resonance''}$ condition. If the non-resonance condition
\eqref{non-res} is satisfied, the set $\{\omega t\}_{t \in
  \mathbb{R}}$ is dense on $\mathbb{T}^d$.
\subsection{Quasi-periodic solutions, hull functions}
In this paper, we are interested in finding the quasi-periodic solutions with frequency $\omega\in \mathbb{R}^d$. These are functions of time $t$ with the form
\begin{equation}\label{realso}
x_\varepsilon(t) = U_\varepsilon(\omega t)
\end{equation}
for a suitable function $U_{\varepsilon}:\mathbb{T}^{d} \rightarrow \mathbb{R}^n$, indexed by the small parameter $\varepsilon$. 
The function  $U_{\varepsilon}$
is often called the \emph{``hull function''}. 
Substituting \eqref{realso} into equation \eqref{000.1} and using that
 $\{\omega t\}_{t \in \mathbb{R}}$ is dense in $\mathbb{T}^d$, 
  we obtain that 
\eqref{000.1} holds for a continuous function $U_\varepsilon$ if and only if the hull function
$U_\varepsilon$ satisfies
\begin{equation}\label{fixeq}
\varepsilon\left( \omega\cdot \partial_{\theta}\right) ^2 U_{\varepsilon}(\theta)+\left( \omega\cdot \partial_{\theta}\right) U_{\varepsilon}(\theta) 
+\varepsilon g(U_{\varepsilon}(\theta))=\varepsilon f(\theta).
\end{equation}
Hence, our treatment for equation \eqref{000.1} will be based on finding 
$U_{\varepsilon}$ which solves \eqref{fixeq}. We will manipulate \eqref{fixeq} to reformulate it
as a fixed point problem that can be solved by the contraction argument (or the implicit function theorem).

The equation \eqref{fixeq} we will solve involves parameter $\varepsilon$ (the
inverse of coefficient multiplying the damping). We will obtain solutions with  delicate regularity in $\varepsilon$, which are objects in a space of 
functions.
Precisely, in the analytic case (see Section~\ref{sec:analytic}), we will get a solution 
$U_{\varepsilon}$ of equation \eqref{fixeq} depending analytically on  $\varepsilon$ when $\varepsilon$ ranges on a
complex domain $\Omega$ which does not include the origin $\varepsilon
= 0$ but so that the origin is in the closure of $\Omega$.  In the finitely
differentiable case (see Section~\ref{sec:finitely}), the solution $
U_{\varepsilon}$ is 
differentiable in $\varepsilon$ when $\varepsilon$ is in a real domain $\widetilde{\Omega}$ which does also not 
include zero but includes it in its closure.

However, when we consider the regularity for the solution $U_{\varepsilon}$ of equation \eqref{fixeq} as $\varepsilon$ goes to $0$,  we get that $ U_{\varepsilon}$
is continuous in $\varepsilon$ in the topologies used in
the fixed point problem (see Lemma~\eqref{continuous}). Moreover, we will show that, in the generality
considered in this paper, there are cases in which the solution is
not differentiable at $\varepsilon = 0$ (see Remark~\ref{noo-de}).

Later, we will develop analogous procedures for the PDE model \eqref{b-e} (see
Section~\ref{sec:pde}).  We anticipate that the treatment is inspired
by this section presenting the formulation for ODE. The unknowns will not take values in
$\mathbb{R}^n$, but rather will take values in a Banach space of
functions.  In addition,  the partial differential equation \eqref{b-e} is ill-posed and its nonlinearity is unbound, which make us do
some more drastic rearrangement for its fixed point equation (see \eqref{ffixeq}).

\subsection{Formulation of the fixed point problem}
\label{sec:formulationfixed}
In this part, we just present the formal manipulations. The precise set up will follow, but it is natural to present first the formal manipulations since the rigorous setting is chosen to make them precise. 

Our goal is to transform equation \eqref{fixeq} into an
equivalent fixed point problem.  We rewrite \eqref{fixeq} as
 \begin{equation}\label{fixeq1}
 \begin{split}
 \varepsilon\left( \omega\cdot \partial_{\theta}\right) ^2 U_{\varepsilon}(\theta)+\left( \omega\cdot \partial_{\theta}\right) U_{\varepsilon}(\theta) 
 +\varepsilon A U_{\varepsilon}(\theta)=\varepsilon f(\theta)-\varepsilon\hat{g}(U_{\varepsilon}(\theta)),
 \end{split}
 \end{equation}
where $A$ is a constant matrix and 
 \begin{equation*}
 \hat{g}(x)=g(x)-Ax.
 \end{equation*}

Note that, in both the analytic case and the highly differentiable case , 
we use assumption {\bf{H}}. It is obvious that 
\begin{equation}\label{nont0}
 \hat{g}(0)=0,\,\,D\hat{g}(0)=0.
\end{equation}
Namely,
\begin{equation*}
  \hat{g}(x)=O(x^2),\,\,D\hat{g}(x)=O(x),
 \end{equation*}
where $ O(x)$ denotes the same order as $x$. As a consequence, $D\hat{g}$ is small (in many sense) in a small neighborhood of the origin $x=0$. We could also assume that $D\hat{g}$ is globally small. This is trivial in the sense of complex analyticity by Liouville's theorem. When $g$ is just Lipschitz, we need that $\text{Lip}(\hat{g})$  is globally small as  condition $\mathbf{\widetilde H}$. 
\medskip

Based on equation \eqref{fixeq1} and denoting by $Id$  the $n\times n$
identity matrix, we introduce the linear operator $\mathcal{L}_{\varepsilon}$
as
\begin{equation}\label{ope}
\mathcal{L}_{\varepsilon}=\varepsilon\left( \omega\cdot \partial_{\theta}\right) ^2 Id +\left( \omega\cdot \partial_{\theta}\right)Id  
+\varepsilon A,
\end{equation}
defined on $n-$dimensional periodic functions of $\theta\in \mathbb{T}^d$. Then,
 \eqref{fixeq1} can be rewritten by
\begin{equation}\label{fixeq2}
\mathcal{L}_{\varepsilon}(U_{\varepsilon}(\theta)) =\varepsilon f(\theta)-\varepsilon\hat{g}(U_{\varepsilon}(\theta)).
\end{equation}

As shown in Section~\ref{sec:linverse},  the operator
$\mathcal{L}_{\varepsilon}$ is boundedly invertible in the  space
$H^{\rho,m}$ defined in Section ~\ref{sec:spacesode} when
$\varepsilon$ ranges in a suitable complex domain. This allows the
equation \eqref{fixeq2} to be transformed into a fixed point problem
as
\begin{equation}\label{fixeq3}
U_{\varepsilon}(\theta) =\varepsilon\mathcal{L}_{\varepsilon}^{-1}\left[  f(\theta)-\hat{g}(U_{\varepsilon}(\theta))\right]\equiv \mathcal{T}_{\varepsilon}(U_{\varepsilon})(\theta),
\end{equation}
where we have introduced the operator $\mathcal{T}_{\varepsilon}$.  For a fixed $\varepsilon$, we
can obtain a solution $U_{\varepsilon}$ for equation \eqref{fixeq3} by the contraction
mapping principle. Further, we want to get a solution $U_{\varepsilon}$ possessing optimal regularity in $\varepsilon$. This can be achieved by considering  operator $\mathcal{T}$ above in a function space consisting of functions regular in $\varepsilon$ (see Section~\ref{sec:analyticitysolution} for analytic case and Section~\ref{sec:finisolution} for highly differentiable case). Specially, in the highly differentiable case,  we will use the classic implicit function theorem to get the results with optimal regularity in $\varepsilon$.  For convenience, we now introduce the operator $\mathbf{T}$ involving the arguments $\varepsilon$ and $U$ as the following:
\begin{equation}\label{impli}
\begin{split}
\mathbf{T}(\varepsilon,U):=U-\mathcal{T}(\varepsilon,U).
\end{split}
\end{equation}
 This  makes it clear to obtain the solution
$U=U_{\varepsilon}$, as a function of $\varepsilon$, having the same regularity as $\mathbf{T}$ by the classical implicit function theorem.

Two subtle points appear in this strategy. One is the invertibility of the linear operator $\mathcal{L}_\varepsilon$ and 
the bound of its inverse. 
 Another is the regularity of 
the composition operator $\hat{g}\circ U$ in \eqref{fixeq3}. We also need to study the dependence on the parameter $\varepsilon$ of the solution $U_{\varepsilon}$ satisfying equation \eqref{fixeq3}.

We observe that the linear operator $\mathcal{L}_{\varepsilon}$ is
diagonal in the basis of Fourier functions. This suggests that we use
some variants of Sobolev (or Bergman) spaces which provide analyticity
-- or in the low regularity case $L^2$ or $H^1$. Hence, it will be
useful that the spaces we consider have norms that can be estimated
very easily by estimating the Fourier coefficients.  The estimates of the Fourier coefficients involves the assumptions that the eigenvalues of $A$ are nonzero real number and
that the range of $\varepsilon$ is restricted to a domain accumulating at the origin $\varepsilon=0$. (See Section~\ref{sec:coe} for details).

For the nonlinear estimates, we need that the composition  operator defined by $\hat{g}\circ U$  is
 smooth considered as a mapping acting on the spaces 
we consider.   The regularity of the composition on the left
by a smooth functions acting on  variants of Sobolev spaces have been widely studied \cite{MR74,AZ90, kappe03}.  In Sections~\ref{sec:spacesode}, we 
will present the precise spaces and some properties in these spaces used to 
implement our program.

\subsection{Some heuristic considerations on the smallness conditions required for the present method}\label{sec:small}
Recall the fixed point equation \eqref{fixeq3}, the operator we consider has the structure 
\begin{equation*}
U =\varepsilon\mathcal{L}_{\varepsilon}^{-1} f-\varepsilon\mathcal{L}_{\varepsilon}^{-1}\hat{g}(U)\equiv \mathcal{T}_{\varepsilon}(U).
\end{equation*}
To solve it by iteration, roughly, we need that the map $U\rightarrow \varepsilon\mathcal{L}_{\varepsilon}^{-1}\hat{g}\circ U$ is a contraction in a domain that contains a ball around $\varepsilon\mathcal{L}_{\varepsilon}^{-1}f$. Of course, the notions of contraction and smallness depend on the spaces under consideration. The results of existence are sharper if we consider spaces of more regular functions and the results of local uniqueness are sharper if we consider spaces of less regular functions.  

Both the contraction properties of $\varepsilon\mathcal{L}_{\varepsilon}^{-1}\hat{g}\circ U$ and the smallness properties of $\varepsilon\mathcal{L}_{\varepsilon}^{-1}f$ are formulated  in appropriate norms (which change with the regularity considered). As we will see in Section~\ref{sec:linverse}, the operator $\varepsilon\mathcal{L}_{\varepsilon}^{-1}$ can be bounded in appropriate norms, which allows us to just consider the smallness of $f$ and the properties of the composition $\hat{g}\circ U$. 

To this end, it is clear that we can trade off some of the smallness assumptions in $\hat{g}$ and $f$. If we are willing to make global assumptions of smallness on $\hat{g}$, we do not need any smallness assumption on $f$. If, on the other hand, we assume that $\hat{g}$ is smooth and $\hat{g}(0)=D\hat{g}(0)=0$, we have that $\hat{g}$ is small (in many senses) in a small neighborhood at the origin. From this point of view, it is necessary to impose smallness condition on $f$ in this small neighborhood.

There are some caveats to these arguments:

In the analytic case, assuming that $D\hat{g}$ is small globally (even bounded) in the whole complex space $\mathbb{C}^n$, Liouville's theorem shows that it is constant, namely, $\hat{g}$ is linear. This makes our result true, but it is trivial and we will not state it. Of course, Liouville's theorem is only a concern for analytic results. 

In the low regularity cases (e.g. $L^2$ or $H^1$ when $d\geq 2$), the range of $f$ may be the whole of $\mathbb{R}^n$, hence we need to make global assumptions on smallness in $\hat{g}$. 
In the case of $H^m$ regularity with $m>\frac{d}{2}$, we  prove our results under two types of smallness assumptions (See Section~\ref{sec:proof-fi}).

We also advance that in the case of $H^1$ regularity, the contraction argument we use will be somewhat more sophisticated. (See Section~\ref{sec:lowregularity}).

\section{Function spaces}
\label{sec:spacesode}
\subsection{Choice of spaces}\label{sec:choice}
To implement the fixed point problem outlined in Section~\ref{sec:formulation}, we need to define precisely function spaces with appropriate norms.  The discussion in Section ~\ref{sec:analytic} will make clear, it is very 
convenient that the norms  can be expressed in terms of the Fourier coefficients of functions. 
In such a case, 
the inverse of the linear operator  $\mathcal{L}_{\varepsilon}$ can be easily estimated just by estimating its Fourier coefficients.  We are allowed to use the base in a such way that the Fourier coefficients of the multiplier operator $\mathcal{L}_{\varepsilon}$ have the Jordan standard form. (See Section~\ref{sec:multiplier}).

We also need the spaces to possess other properties allowing us to control the composition $\hat{g}\circ U$ in \eqref{fixeq3} with ease,
such as Banach algebras properties under multiplication 
and the properties of the composition operators. 
To study the analyticity in $\varepsilon$,  we will define spaces of analytic functions of $\varepsilon$ in Section ~\ref{sec:analyticitysolution}.  In this section,
we use the same notations for Banach spaces as in \cite{Llave09, Rafael13,Rafael16}.

For $\rho \geq 0$, we denote 
\begin{equation*}
\mathbb{T}^{d}_{\rho}=\left\lbrace \theta\in \mathbb{C}^d/(2\pi\mathbb{Z})^d\,:\, \mathrm{Re}(\theta_j)\in \mathbb{T},\,\,|\mathrm{Im}(\theta_j)|\leqslant \rho,\,\,j=1,\ldots,d \right\rbrace.
\end{equation*}

We denote the Fourier expansion of a periodic function 
 $f(\theta)$ on  $\mathbb{T}^{d}_{\rho}$ by
\begin{equation*}
\begin{split}
f(\theta)=\sum_{k\in\mathbb{Z}^{d}}\widehat{f}_{k}e^{\mathrm{i} k\cdot\theta},
\end{split}
\end{equation*}
where $k\cdot\theta=k_1 \theta_1+\cdots+k_d \theta_d$ represents the Euclidean product in $\mathbb{C}^d$ and $\widehat{f}_{k}$ are the Fourier coefficients of $f$.
If $f$ is analytic and bounded on $\mathbb{T}^{d}_{\rho}$, then the Fourier coefficients satisfy the Cauchy bounds 
\begin{equation*}
|\widehat{f}_{k}|\leq M e^{-|k|\rho}
\end{equation*}
with $M$ being the maximum of $|f(\theta)|$ on $\mathbb{T}^{d}_{\rho}$ and $|k|=|k_1|+\ldots+|k_d|$.

\begin{definition}\label{space}
For $\rho \geq 0,\,m,\,d,\,n \in  
\mathbb{N}_{+}$, we denote by	$H^{\rho,m}$ the space of analytic functions $U$ in $\mathbb{T}_\rho^d$ with finite norm $:$
	\begin{equation*}
	\begin{aligned}
	H^{\rho,m}:&=H^{\rho,m}(\mathbb{T}^d)\\
	&=\bigg\lbrace U:\,\mathbb{T}_\rho^d\rightarrow \mathbb{C}^n\,\mid\,\|U\|_{\rho,m}^{2}=\sum_{k\in \mathbb{Z}^d}|\widehat{U}_{k}|^{2}e^{2\rho |k|}
	(|k|^{2}+1)^{m}<+\infty\bigg\rbrace.
	\end{aligned}
	\end{equation*}
\end{definition}

It is obvious that the space
$\big(H^{\rho,m},\,\,\|\cdot \|_{\rho,m}\big)$ is a Banach space and indeed a Hilbert space. From the real analytic point of view, we consider the Banach space $H^{\rho,m}$ of the functions that take real values for real arguments.

For $\rho=0$, $H^{m}(\mathbb{T}^d):=H^{0,m}(\mathbb{T}^d)$ is the
standard Sobolev space, we refer to the references
\cite{taylor3,sobolev} for more details.  In this case, when
$m>\frac{d}{2}$, by the Sobolev embedding theorem (see chapter $2$ and
$6$ in \cite{taylor3}), we obtain that 
$H^{m+l}(\mathbb{T}^d)\,(l=1,2,\cdots)$ embeds continuously into $C^l(\mathbb{T}^d)$.

For $\rho>0$,  functions in the space $H^{\rho,m}$ are analytic in the interior of $\mathbb{T}_\rho^d$ and extend to Sobolev functions 
on the boundary of $\mathbb{T}_\rho^d$. 

\begin{remark} 
As a matter of fact, when $\rho>0$ and $m>d$, the space $H^{\rho,m}$ can be identified with a closed space
of the standard Sobolev space $H^{m}(\mathbb{T}_\rho^d)$ consisting
of functions which are complex differentiable. The manifold
$\mathbb{T}^d_\rho$ has $2d$ real dimension so that, when $m>d$,
the standard Sobolev embedding theorem shows that $H^{\rho,m+l}\,(l=1,2,\cdots)$ embeds continuously into $C^l(\mathbb{T}^d_\rho)$. Since the uniform limit of 
complex differentiable functions is also complex differentiable, we
conclude that our space is a closed space of the standard  Sobolev
 space of $\mathbb{T}^d_\rho$ considered as a $2d-$dimensional real manifold. Several variants of this idea appear already in Bergman spaces in \cite{ReedS75,ReedS72}.

We also point out that the set of functions in $H^{\rho,m}$ which take real values for real arguments is 
a closed set  in $H^{\rho,m}$ (this set is also a linear space over the reals).
 Since we will show that  our operators map this set into itself, we get that the fixed point we 
produce will be such that they give real values for real arguments. 
\end{remark}

\subsection{Properties of the chosen spaces $H^{\rho,m}$ above}
We note several  well-known properties of the space $H^{\rho,m}$ defined in the Section~\ref{sec:choice}, which will 
play a crucial role in what follows. 
\begin{lemma}[Interpolation inequalities]\label{interpolation}
For any $0\leq i\leq m, \,0\leq \nu \leq 1$, denote $s=(1-\nu)i+\nu m$,
there exist constants $C_{i,m}$ depending on $i,m$ such that 
\begin{enumerate}
\item -\emph{Sobolev case:} for $f\in H^{m}$, we have that
\begin{equation}\label{Sobolevint} 
\|f\|_{H^s}\leq C_{i,m}\cdot\|f\|_{H^i}^{1-\nu}\cdot\|f\|_{H^m}^{\nu},
\end{equation}
\item -\emph{Analytic case:} for $\rho>0,\,g\in H^{\rho,m},$ we have that
\begin{equation}\label{Sobolevint2} 
\|g\|_{H^{\rho, s}}\leq C_{i,m}\cdot\|g\|_{H^{\rho, i}}^{1-\nu}\cdot\|g\|_{H^{\rho, m}}^{\nu}.
\end{equation}
\end{enumerate}
\end{lemma}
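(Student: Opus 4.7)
The plan is to prove both inequalities by a direct application of Hölder's inequality to the Fourier series representation of the norms, exploiting the fact that all three norms $\|\cdot\|_{H^i}$, $\|\cdot\|_{H^s}$, $\|\cdot\|_{H^m}$ are weighted $\ell^2$ norms of the Fourier coefficients with weights that depend log-linearly on the smoothness index. The cases $\nu = 0$ and $\nu = 1$ are tautological, so I assume $0 < \nu < 1$ and take $C_{i,m} = 1$ (no nontrivial constant is actually needed).

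For the Sobolev case, I start from the defining expression
\begin{equation*}
\|f\|_{H^s}^2 = \sum_{k\in \mathbb{Z}^d} |\widehat{f}_k|^2 (|k|^2+1)^{(1-\nu)i + \nu m},
\end{equation*}
and factor the summand as
\begin{equation*}
|\widehat{f}_k|^2 (|k|^2+1)^s = \bigl[|\widehat{f}_k|^2 (|k|^2+1)^i\bigr]^{1-\nu} \cdot \bigl[|\widehat{f}_k|^2 (|k|^2+1)^m\bigr]^{\nu}.
\end{equation*}
Hölder's inequality on $\ell^1(\mathbb{Z}^d)$ with conjugate exponents $p = 1/(1-\nu)$ and $q = 1/\nu$ then gives
\begin{equation*}
\|f\|_{H^s}^2 \leq \Bigl(\sum_k |\widehat{f}_k|^2 (|k|^2+1)^i\Bigr)^{1-\nu} \Bigl(\sum_k |\widehat{f}_k|^2 (|k|^2+1)^m\Bigr)^{\nu} = \|f\|_{H^i}^{2(1-\nu)} \|f\|_{H^m}^{2\nu},
\end{equation*}
and taking square roots yields \eqref{Sobolevint}.

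For the analytic case, the same proof goes through without modification once one observes that the exponential weight $e^{2\rho|k|}$ splits multiplicatively as $e^{2\rho|k|(1-\nu)} \cdot e^{2\rho|k|\nu}$, so one can write
\begin{equation*}
|\widehat{g}_k|^2 e^{2\rho|k|}(|k|^2+1)^s = \bigl[|\widehat{g}_k|^2 e^{2\rho|k|}(|k|^2+1)^i\bigr]^{1-\nu} \cdot \bigl[|\widehat{g}_k|^2 e^{2\rho|k|}(|k|^2+1)^m\bigr]^{\nu}
\end{equation*}
and apply Hölder exactly as before to obtain \eqref{Sobolevint2}.

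There is really no main obstacle here: the inequality is a direct consequence of log-convexity of the weights, which is precisely what Hölder's inequality encodes. The one point that would need verification if one wanted to track constants carefully is whether the reals-for-real-arguments subspace is stable under the operations involved, but since only the sizes of Fourier coefficients enter (and the weights are positive reals), the argument respects this structure automatically.
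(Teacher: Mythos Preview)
Your argument is correct and in fact sharper than what the paper does: you obtain the inequalities with constant $C_{i,m}=1$ by a direct H\"older estimate on the Fourier side, and your factorization handles the analytic weight $e^{2\rho|k|}$ on exactly the same footing as the polynomial weight. The paper, by contrast, does not give a self-contained proof: it simply cites the standard Sobolev interpolation inequality from the literature for \eqref{Sobolevint}, and then deduces \eqref{Sobolevint2} by invoking the earlier remark that $H^{\rho,m}(\mathbb{T}^d)$ can be identified with a closed subspace of the ordinary Sobolev space on the $2d$-dimensional real manifold $\mathbb{T}_\rho^d$. Your approach is more elementary and avoids that identification entirely; the paper's approach has the mild advantage of reducing everything to a single black-box citation, but at the cost of an unnecessary detour through the embedding.
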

The inequality \eqref{Sobolevint} is the very standard Sobolev interpolation inequality in the literature
\cite{taylor3,Zehnder75}. 
Since, as mentioned before,  the spaces $H^{\rho,m}(\mathbb{T}^d)$ can be considered as a subspace of 
the standard Sobolev space in $\mathbb{T}_\rho^d$, we also have \eqref{Sobolevint2}.

\begin{lemma}[Banach algebra properties]\label{alge}
	We have the following properties in two cases:
	\begin{enumerate}
		\item -\emph{Sobolev case (see  \cite{sobolev,taylor3}):} Let $m>\frac{d}{2}$, there exists a constant $C_{m,d}$ depending only on $m,d$ such that for 
		$u_1,\,u_2\in H^{m}$, the product $u_1\cdot u_2\in H^{m}$ and 
		\begin{equation*}
		\|u_1u_2\|_{H^{m}}\leq C_{m,d}\|u_1\|_{H^{m}}\|u_2\|_{H^{m}}.
		\end{equation*}
		\item -\emph{Analytic case:} For $\rho>0,\,m>d$, there exists a constant $C_{\rho,m,d}$ depending  on $\rho,m,d$ such that for 
		$u_1,\,u_2\in H^{\rho,m}$, the product $u_1\cdot u_2\in H^{\rho,m}$ and 
		\begin{equation*}
		\|u_1u_2\|_{H^{\rho,m}}\leq C_{\rho,m,d}\|u_1\|_{H^{\rho,m}}\|u_2\|_{H^{\rho,m}}.
		\end{equation*}
	\end{enumerate}
	
	In particular,  $H^{\rho,m}$ is a Banach algebra when $\rho,\,m,\,d$ are as above.
\end{lemma}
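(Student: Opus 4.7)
The plan is to pass to Fourier coefficients, where pointwise multiplication becomes convolution $\widehat{u_1 u_2}_k = \sum_{j\in\mathbb{Z}^d}\hat u_{1,j}\,\hat u_{2,k-j}$, and then combine a Peetre-type weight inequality with Young's convolution inequality. The argument is essentially the same in both cases up to bookkeeping of the exponential weight, so it is natural to carry them out in parallel.

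For the Sobolev case ($\rho=0$, $m>d/2$), I would apply the elementary inequality $(1+|k|^2)^{m/2}\leq 2^{m/2}[(1+|j|^2)^{m/2}+(1+|k-j|^2)^{m/2}]$ inside the convolution, split into two symmetric terms, and invoke Young's inequality $\|a*c\|_{\ell^2}\leq \|a\|_{\ell^2}\|c\|_{\ell^1}$. One factor is then directly $\|u_i\|_{H^m}$, and the other reduces to $\|\hat u_i\|_{\ell^1}$, which by Cauchy--Schwarz is bounded by $\|u_i\|_{H^m}\bigl(\sum_k (1+|k|^2)^{-m}\bigr)^{1/2}$; the residual series converges precisely because $m>d/2$. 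For the analytic case ($\rho>0$, $m>d$), the extra weight $e^{2\rho|k|}$ factors under the convolution via the triangle inequality $e^{\rho|k|}\leq e^{\rho|j|}\,e^{\rho|k-j|}$, and the same Peetre--Young argument goes through with the modified weights $A_k=e^{\rho|k|}(1+|k|^2)^{m/2}|\hat u_{1,k}|$ and $B_k$ defined analogously. Alternatively, one can invoke the identification in the remark after Definition~\ref{space} between $H^{\rho,m}(\mathbb{T}^d)$ and a closed subspace of the classical Sobolev space $H^m(\mathbb{T}^d_\rho)$ on the $2d$-dimensional real manifold $\mathbb{T}^d_\rho$, where the algebra property is classical once $m>d$.

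The only step requiring genuine care is the bookkeeping of constants in the Peetre--Young splitting so that the final multiplicative constant depends only on the parameters $\rho,m,d$ declared in the lemma. I do not anticipate a serious obstacle: once the three ingredients (Fourier convolution, Peetre splitting, Young's inequality) are in place, both cases close in a handful of lines. The threshold $m>d$ in the analytic case is slightly stronger than what the direct Fourier argument would give, but it is natural to retain it so that the statement is compatible with the Sobolev-embedding threshold $H^{\rho,m+l}\hookrightarrow C^l(\mathbb{T}^d_\rho)$ used elsewhere in the paper.
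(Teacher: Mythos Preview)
Your proposal is correct and complete. The paper itself does not prove Lemma~\ref{alge}: the Sobolev case is stated with a bare citation to \cite{sobolev,taylor3}, and the analytic case is simply asserted, the only justification being the remark following Definition~\ref{space} that $H^{\rho,m}(\mathbb{T}^d)$ embeds as a closed subspace of the classical Sobolev space $H^m(\mathbb{T}^d_\rho)$ on the $2d$-dimensional real manifold $\mathbb{T}^d_\rho$. Your alternative route for the analytic case is exactly this identification, so in that sense you recover the paper's implicit argument; your primary route via the direct Fourier-side Peetre--Young computation is more self-contained and in fact yields the sharper threshold $m>d/2$ in the analytic case as well, as you correctly observe. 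One minor bookkeeping point: the additive Peetre constant should be $2^{m}$ rather than $2^{m/2}$ (one factor of $2^{m/2}$ from $1+|k|^2\le 2[(1+|j|^2)+(1+|k-j|^2)]$ and another from $(a+b)^{m/2}\le 2^{m/2}(a^{m/2}+b^{m/2})$), but this does not affect the argument.
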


To analyze  the operator defined in \eqref{fixeq3},  we also need to estimate the
properties of the composition operator $\hat{g}\circ U$.  The following are well known consequence of Gagliardo-Nirenberg inequalities.
\begin{lemma}[Composition properties]\label{gag-nir}
	We have the following properties in two case:
\begin{enumerate}
	
	\item -\emph{Sobolev case (see  \cite{taylor3,Cala10}):} Let $g\in C^m(\mathbb{R}^n,\,\mathbb{R}^n)$ and assume that $g(0)=0$.
	Then, for $u\in H^{m}(\mathbb{T}^d,\,\mathbb{R}^n)\cap L^{\infty}(\mathbb{T}^d,\,\mathbb{R}^n)$, we have 
	\begin{equation*}
	\begin{split}
	\|g(u)\|_{ H^{m}}\leq \mathbf{c}\|u\|_{L^{\infty}}\left(1+\|u\|_{ H^{m}}\right),
	\end{split}
	\end{equation*}
	where $\mathbf{c}:=\mathbf{c}(\eta)=\sup_{|x|\leq \eta,\,\alpha\leq m}|D^{\alpha}g(x)|$. 
	Particularly, when $m>\frac{d}{2}$
	(so that, by the Sobolev embedding theorem $H^m \subset L^{\infty}$), if $g\in C^{m+2},$ then
	\begin{equation}\label{com-sob}
	\begin{split}
	\|g\circ(u+v)-g\circ u-Dg\circ u\cdot v\|_{ H^{m}}\leq C_{m,d}\|u\|_{L^{\infty}}\left(1+\|u\|_{ H^{m}}\right)\|g\|_{C^{m+2}}\|v\|_{ H^{m}}^2,
	\end{split}
	\end{equation}
	\item -\emph{Analytic case:}
	Let $g:B \rightarrow \mathbb{C}^n $ with $B$ being an open ball around the origin in $\mathbb{C}^n$ and assume that $g$
	is analytic  in $B$. 
Then, for $u\in H^{\rho,m}(\mathbb{T}_{\rho}^d,\,\mathbb{C}^n)\cap L^{\infty}(\mathbb{T}_{\rho}^d,\,\mathbb{C}^n)$ with $u(\mathbb{T}_{\rho}^d)\subset B$, we have 
	\begin{equation*}
	\begin{split}
	\|g(u)\|_{ H^{\rho,m}}\leq C_{u}\|u\|_{L^{\infty}(\mathbb{T}_{\rho}^d)}\left(1+\|u\|_{ H^{\rho,m}}\right),
	\end{split}
	\end{equation*}
where $C_{u}$ is a constant depending on the norm of $u$. In the case of $m>d$, we have that 
\begin{equation*}
\begin{split}
\|g\circ(u+v)-g\circ u-Dg\circ u\cdot v\|_{ H^{\rho,m}}\leq C_{\rho,m,d}\|u\|_{L^{\infty}(\mathbb{T}_{\rho}^d)}\left(1+\|u\|_{ H^{\rho,m}}\right)\|v\|_{ H^{\rho,m}}^2.
\end{split}
\end{equation*}
\end{enumerate}
\end{lemma}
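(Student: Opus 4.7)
The plan is to reduce each of the four inequalities to the basic Moser-type zeroth-order bound $\|g\circ u\|_{H^m} \leq c\|u\|_{L^\infty}(1+\|u\|_{H^m})$ (and its analytic analogue), and then to obtain the second-order Taylor remainder bound from it by writing the remainder as an integral of $D^2 g$ against the symmetric bilinear form $v\otimes v$. For the Sobolev case the zeroth-order bound is the classical Moser estimate recorded in \cite{taylor3,Cala10}; I would derive it by applying the Faà di Bruno formula to $D^\alpha(g\circ u)$ for $|\alpha|\leq m$, yielding a sum of terms of the shape $(D^k g)\circ u\,\cdot\, D^{\alpha_1}u \cdots D^{\alpha_k}u$ with $|\alpha_1|+\cdots+|\alpha_k| = |\alpha|$, and then estimating each factor in $L^{p_i}$ via Gagliardo--Nirenberg,
\[ \|D^{\alpha_i}u\|_{L^{p_i}} \leq C\,\|u\|_{L^\infty}^{1-|\alpha_i|/m}\|u\|_{H^m}^{|\alpha_i|/m}, \qquad \sum 1/p_i = 1/2. \]
To extract the single linear factor of $\|u\|_{L^\infty}$ demanded by the statement, I would use the hypothesis $g(0)=0$ and write $g(u) = \int_0^1 Dg(tu)\,u\,dt$, so that one copy of $u$ sits outside the chain-rule sum and contributes the $L^\infty$ factor, while the remaining derivatives of $Dg\circ(tu)$ are estimated as above.

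For the second-order estimate in the Sobolev case I would invoke Taylor's formula with integral remainder,
\[ g(u+v) - g(u) - Dg(u)v = \int_0^1 (1-t)\,D^2 g(u+tv)[v,v]\,dt, \]
and apply the zeroth-order Moser bound to $D^2 g\in C^m$ composed with $u+tv$, then control the bilinear form against $v\otimes v$ via the Banach algebra property of Lemma~\ref{alge} (which requires $m>d/2$), and integrate in $t$. For the analytic case the same strategy applies verbatim: Lemma~\ref{alge} provides the algebra structure of $H^{\rho,m}$ for $m>d$, the sup-norms $\sup_{|x|\leq \|u\|_{L^\infty}}|D^k g(x)|$ appearing from Faà di Bruno are finite by Cauchy estimates for the analytic $g$ on $B$, and these quantities are absorbed into the $u$-dependent constant $C_u$; the derivation then runs on the $2d$-dimensional real manifold $\mathbb{T}_\rho^d$ and produces outputs that remain in the closed subspace of holomorphic sections identified in the remark following Definition~\ref{space}.

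The main bookkeeping obstacle is preserving the strictly linear dependence on $\|u\|_{L^\infty}$ (rather than letting it be absorbed into the $H^m$ or $H^{\rho,m}$ factor) throughout the Faà di Bruno sum; this is exactly what forces the pre-integration trick $g(u) = \int_0^1 Dg(tu)\,u\,dt$ before expanding by the chain rule, and similarly dictates how the two factors of $v$ appear in the Taylor remainder bound (one from the integrand and one from the chain-rule expansion, combined using the algebra property and, when needed, the Sobolev embedding $H^m\hookrightarrow L^\infty$ for $m>d/2$). In the analytic case, one must additionally verify at each step that complex analyticity in $\theta\in\mathbb{T}_\rho^d$ is preserved, so that the output lies in $H^{\rho,m}$ itself rather than merely in the ambient real Sobolev space on $\mathbb{T}_\rho^d$.
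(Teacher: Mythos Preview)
Your proposal is correct and follows essentially the same approach as the paper. The paper refers the zeroth-order Moser bound to the literature (\cite{taylor3,Cala10,kappe03,MR74}) and only sketches the proof of \eqref{com-sob}: it writes the Taylor remainder as $\int_0^1\int_0^t D^2g\circ(u+tsv)\cdot v^2\,ds\,dt$, bounds $D^2g\circ(u+tsv)$ in $H^m$ uniformly in $s,t$ by the zeroth-order estimate, and then invokes the Banach algebra property of Lemma~\ref{alge}---exactly your argument, up to the cosmetic difference between the double-integral and single-integral $(1-t)$ forms of the remainder. Your extra detail on deriving the basic Moser estimate via Fa\`a di Bruno, Gagliardo--Nirenberg, and the representation $g(u)=\int_0^1 Dg(tu)\,u\,dt$ is more than the paper provides but is the standard route in the cited references.
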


The complete proof of Lemma~\ref{gag-nir} 
can be found in Proposition $3.9$ in \cite{taylor3} or 
Proposition $2.20$ in  \cite{kappe03}, Proposition $1$ in \cite{MR74}.
To make the paper self-contained, we just give an sketch of 
the ideas for the inequality  \eqref{com-sob}, but refer the interested readers to the references above.  

Since
\begin{equation*}
\begin{split}
g\circ(u+v)(\theta)-g\circ u(\theta)-Dg\circ u(\theta)\cdot v(\theta)=\int_{0}^1\int_{0}^{t} D^2g\circ(u+tsv)(\theta)\cdot v^2(\theta)dsdt,
\end{split}
\end{equation*}
we get the desired result by the facts that $D^2g\circ(u+tsv)\in H^m$ and its $H^m$ norm is bounded uniformly in $t,s$ and that $H^m$ is a  Banach algebra under multiplication by  Lemma~\ref{alge}. 
The  range of the derivative $D\hat{g}$ is a $n \times n$ matrix, which can 
be identified with $\mathbb{R}^{n^2}$. Note that the dimension of 
the range of $g$ does not play any role in our arguments.  

The proof of Lemma~\ref{gag-nir} is rather elementary in the analytic case.

As a matter of fact, Lemma~\ref{gag-nir} gives not only the composition operator is differentiable but also presents formula for the derivative. It is easy to check that the same argument leads to higher derivatives of the composition operator if we assume more regularity for function $g$. More precisely, we have the following proposition:
\begin{proposition}[Regularity of composition operators]\label{com-regu}
	We have that following two cases:
\begin{enumerate}
	
	\item -\emph{Sobolev case:}
	Let $m>\frac{d}{2}$. Then, the left composition operator 
	\begin{equation*}
	\mathcal{C}_g:\,H^{m}(\mathbb{T}^d,\mathbb{R}^n)\rightarrow H^{m}(\mathbb{T}^d,\mathbb{R}^n)
	\end{equation*}
	defined by 
	\begin{equation*}
	\mathcal{C}_{g}[u](\theta)=g(u(\theta)),
	\end{equation*}
	has the following properties:
	
	If $g\in C^{m+1}(\mathbb{R}^n,\mathbb{R}^n)$, then $\mathcal{C}_g$ is Lipschitz.
	
	If  $g\in C^{m+l+1}(\mathbb{R}^n,\mathbb{R}^n),\, (l=1,2,\cdots)$, then
	$\mathcal{C}_g$ is $C^{l}$.  
	Moreover, the derivative of the operator $\mathcal{C}_{g}$ is  given by 
	\begin{equation*}
	(D\mathcal{C}_{g}[u]v)(\theta)=Dg(u)v(\theta).
	\end{equation*}
\item -\emph{Analytic case:}	Let $\rho>0$. Assume that $m > d$
and $g\,: B \rightarrow \mathbb{C}^n $, where $B$ is an open ball around the origin in $\mathbb{C}^n$,
is analytic  in $B$. 

Let $u_0\in H^{\rho,m}$ be such that $u_0(\mathbb{T}_\rho^d)\subset B$.
Then for all $u$ in a neighborhood $\mathcal{U}$ of $u_0$ in
$H^{\rho,m}$, the operator 
$\mathcal{C}_{g}\,:\,\mathcal{U}\rightarrow
H^{\rho,m}$ 
is analytic. Moreover, for $v\in H^{\rho,m}$, the derivative of the operator $\mathcal{C}_{g}$ is given by
\begin{equation*}
(D\mathcal{C}_{g}[u]v)(\theta)=Dg(u)v(\theta).
\end{equation*}
\end{enumerate}
\end{proposition}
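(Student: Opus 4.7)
The plan is to bootstrap from Lemma~\ref{gag-nir}, which already supplies both the candidate derivative formula $(D\mathcal{C}_g[u]v)(\theta) = Dg(u(\theta))v(\theta)$ and the quantitative second-order Taylor remainder. The key structural observation is that this first derivative factorizes as $v \mapsto \mathcal{C}_{Dg}[u]\cdot v$, i.e.\ composition with $Dg$ followed by pointwise multiplication. Using the Banach algebra property of $H^{\rho,m}$ (Lemma~\ref{alge}), this lets us trade one order of regularity of $g$ for one derivative of $\mathcal{C}_g$, and iterate.

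For the Sobolev Lipschitz claim, I would write
$$g(u(\theta)+v(\theta)) - g(u(\theta)) = \left(\int_0^1 Dg(u(\theta)+tv(\theta))\,dt\right)v(\theta),$$
apply Lemma~\ref{gag-nir} to $Dg \in C^{m}$ (which holds because $g\in C^{m+1}$) so that the integrand lies in $H^{m}$ with norm controlled uniformly in $t$ by $\|u\|_{H^m}+\|v\|_{H^m}$, and then invoke Lemma~\ref{alge} to bound $\|\mathcal{C}_g[u+v]-\mathcal{C}_g[u]\|_{H^m} \leq C(u)\|v\|_{H^m}$ on bounded sets. For $C^{l}$ regularity I induct on $l$: Lemma~\ref{gag-nir} gives $C^1$ with the stated derivative; assuming the result for $l-1$, the map $u \mapsto \mathcal{C}_{Dg}[u]$ is $C^{l-1}$ because $Dg \in C^{m+l}$, and since $(w,v)\mapsto w\cdot v$ is bilinear and bounded on $H^m$ by the algebra property, the composed derivative $u\mapsto [v \mapsto \mathcal{C}_{Dg}[u]\cdot v]$ is $C^{l-1}$ as a map into the bounded operators on $H^m$, giving $\mathcal{C}_g\in C^{l}$.

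For the analytic case I would use the pointwise Taylor series
$$g(u_0(\theta)+v(\theta)) = \sum_{k \geq 0} \frac{1}{k!}D^{k}g(u_0(\theta))\,v(\theta)^{\otimes k},$$
valid for $\|v\|_{L^\infty}$ small enough to keep the range inside $B$. To promote this to an $H^{\rho,m}$-convergent series, I would combine the Cauchy estimate $\sup_{B'}|D^{k}g|\leq C\,k!/r^{k}$ on a slightly smaller ball $B'\subset B$, the Banach algebra bound $\|v^{\otimes k}\|_{H^{\rho,m}} \leq C_{\rho,m,d}^{\,k-1}\|v\|_{H^{\rho,m}}^{k}$ (which uses $m>d$), and the composition bound of Lemma~\ref{gag-nir} applied to each $D^{k}g$ to control $\|\mathcal{C}_{D^{k}g}[u_0]\|_{H^{\rho,m}}$. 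Geometric convergence of the majorant series in a small enough $H^{\rho,m}$-ball around $u_0$ then identifies the right-hand side with $\mathcal{C}_g[u_0+v]$ as a convergent power series, which is the standard characterization of analyticity between complex Banach spaces; the derivative formula is read off from the degree-one term. The same argument performed at each $u\in\mathcal{U}$ (after possibly shrinking $\mathcal{U}$) yields analyticity on $\mathcal{U}$.

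The main obstacle is matching the pointwise radius of convergence of Taylor's series to an $H^{\rho,m}$-radius of convergence: the Banach algebra constant $C_{\rho,m,d}$ costs a factor $C_{\rho,m,d}^{k}$ per term, and one must also uniformly keep $u_0(\mathbb{T}_\rho^d)+v(\mathbb{T}_\rho^d)$ inside $B'$. I would handle this by shrinking $\mathcal{U}$ so that $\sup_{u\in\mathcal U}\|u-u_0\|_{L^\infty}+\mathrm{dist}(u_0(\mathbb{T}_\rho^d),\partial B')$ is small, and choosing the radius of the $H^{\rho,m}$-neighborhood less than $r/C_{\rho,m,d}$; the Sobolev embedding $H^{\rho,m}\hookrightarrow L^\infty$ (valid for $m>d$) ensures the $L^\infty$-smallness follows from $H^{\rho,m}$-smallness.
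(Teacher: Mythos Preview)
Your Sobolev argument is essentially the paper's: both prove the Lipschitz/$C^1$ base case from Lemma~\ref{gag-nir} and then induct via the factorization $D\mathcal{C}_g[u]v=\mathcal{C}_{Dg}[u]\cdot v$, using the Banach algebra property to pass from $\mathcal{C}_{Dg}\in C^{l-1}$ to $D\mathcal{C}_g\in C^{l-1}$. You are slightly more explicit about the integral representation for the Lipschitz step, but the mechanism is identical.

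For the analytic case you take a genuinely different route. The paper argues abstractly: by Sobolev embedding $u_0(\mathbb{T}_\rho^d)$ is compact in $B$, so a small $H^{\rho,m}$-neighborhood keeps ranges inside $B$; then the analytic-case estimate of Lemma~\ref{gag-nir} gives complex Fr\'echet differentiability of $\mathcal{C}_g$, and one invokes the general fact (cited to \cite{Hil57}) that a complex-differentiable map between complex Banach spaces is analytic. You instead build the Taylor series $\sum_k \tfrac{1}{k!}\mathcal{C}_{D^kg}[u_0]\,v^{\otimes k}$ by hand and majorize it geometrically using Cauchy estimates on $D^kg$ and the algebra bound on $v^{\otimes k}$. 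Your approach is more self-contained and yields an explicit (if crude) $H^{\rho,m}$-radius of analyticity $\sim r/C_{\rho,m,d}$; the paper's is shorter but hides the work in the Goursat--Cauchy machinery for Banach-valued functions. One point to watch in your version: bounding $\|\mathcal{C}_{D^kg}[u_0]\|_{H^{\rho,m}}$ via Lemma~\ref{gag-nir} brings in $\sup_{|\alpha|\le m}|D^{\alpha+k}g|$, which by Cauchy grows like $(k+m)!/r^{k+m}$ rather than $k!/r^k$; this only costs a polynomial factor $k^m$ and does not spoil geometric convergence, but it should be stated.
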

\begin{proof}
	In fact, Lemma~\ref{gag-nir} shows that the operator $\mathcal{C}_g$ is $C^1$ when $g\in C^{m+2}$. For $g\in C^{m+l+1}$, we can proceed by induction. If we have proved the result for 
	$l$ and the formula for the derivative, we obtain the case for 
	$l+1$.  Indeed, if  $g\in C^{m+l+1}$, we have $\mathcal{C}_g$ is $C^l$. Then, for  $g\in C^{m+l+2}$, $Dg\in C^{m+l+1}$, we get $D\mathcal{C}_g$ is $C^l$ by induction. Namely, $\mathcal{C}_g$ is $C^{l+1}$.
	
	In the analytic case, we start by observing that 
	$u(\mathbb{T}_\rho^d)\subset B$ is a compact set by the Sobolev embedding theorem. Hence, it is at a bounded distance from the boundary of 
	$B$.  If the neighborhood of $u$ is sufficiently small, the range of all 
	the functions will also be contained in $B$.  Then, we obtain our result by Lemma~\ref{gag-nir}. We can also refer to  \cite{Rafael17} for more details.  
	\end{proof}

Note that, for the Sobolev case in Proposition~\ref{com-regu}, the regularity of $\mathcal{C}_g$  is not optimal, we refer to \cite{Rs96,AZ90,kappe03} for more results.
		Note also that, for the analytic case in Proposition~\ref{com-regu}, the result is not the most general result.  There are results in the case of regularity
	that the Sobolev embedding theorem does not give continuity. In these cases, we need to take more care of 
	the ranges of the functions. 
	 Since the functions are differentiable in the
	complex sense, we obtain that the composition operator $\mathcal{C}_{g}$ is differentiable
	in the complex sense by the chain rule to obtain the
	derivative. Further,
	to get that the operator $\mathcal{C}_g$ is analytic, we just recall the Cauchy result that  also holds for functions whose arguments range over a complex Banach
	space. See~\cite{Hil57}. 

\section{Statement of the main results}
\label{sec:statement}

In this section, we state several results for the
model \eqref{000.1}.
These results are aimed at 
different regularity of the forcing $f$: analyticity (Theorem~\ref{mainthm}),
 finite (but high enough) number of derivatives (Theorem~\ref{theom-fi})
and low regularity (Theorem~\ref{theom-conti}).

\begin{theorem}\label{mainthm}

Suppose that $f\in H^{\rho,m}(\mathbb{T}^d)$ for some
$\rho>0,\,m>d$ and $g$ is analytic in an open ball around the origin in the space $\mathbb{C}^n$. If the condition $\mathbf{H}$ is satisfied, then, for $\varepsilon  \in
\Omega(\sigma,\mu)$, where
\begin{equation}\label{domain0}
\begin{split}
\Omega:=\Omega(\sigma, \mu)=\left\lbrace \varepsilon\in \mathbb{C}\,:\,\mathrm{Re}(\varepsilon)\geq \mu\,|\mathrm{Im}(\varepsilon)|,\,\,
\sigma\le|\varepsilon|\leq 2\sigma\right\rbrace
\end{split}
\end{equation}
with $\mu>\mu_0$ for $\mu_0>0$ sufficiently large and $\sigma>0$
sufficiently small, there is a unique solution $U_{\varepsilon}\in
H^{\rho,m}(\mathbb{T}^d)$ for equation \eqref{fixeq}.

 Furthermore,
considering $U_{\varepsilon}$ as a function of $\varepsilon$, the mapping $\varepsilon\rightarrow
U_{\varepsilon}:\,\Omega\rightarrow H^{\rho,m}(\mathbb{T}^d)$ is
analytic when
$m>(d+2)$. 

In addition, as $\varepsilon\rightarrow 0$, the solution
$U_{\varepsilon}\rightarrow 0$ and the mapping $\varepsilon\rightarrow
U_{\varepsilon}$ is continuous.
\end{theorem}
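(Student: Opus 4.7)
\textbf{Proof plan for Theorem~\ref{mainthm}.} The plan is to solve the fixed point equation \eqref{fixeq3} by the Banach contraction principle in a small ball around the origin in $H^{\rho,m}(\mathbb{T}^d)$, upgrade the $\varepsilon$-dependence to analyticity by repeating the argument in a Banach space of $H^{\rho,m}$-valued holomorphic functions of $\varepsilon$, and extract continuity at $\varepsilon=0$ from Fourier-side dominated convergence.

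The first and most delicate step is a uniform bound on $\varepsilon\mathcal{L}_\varepsilon^{-1}$. Putting $A$ into Jordan form, each block with eigenvalue $\lambda_j$ reduces the analysis to the scalar Fourier symbol $\mu_k^{(j)}(\varepsilon):=\varepsilon(\lambda_j-(\omega\cdot k)^2)+\mathrm{i}(\omega\cdot k)$, plus a finite Neumann series for the nilpotent part. Writing $\varepsilon=a+\mathrm{i}b$ with $a\geq\mu|b|$, one has two complementary lower bounds: $|\mu_k^{(j)}|^2\geq a^2(\lambda_j-(\omega\cdot k)^2)^2$ automatically, and, after absorbing the cross term $2bc\lambda_k$ by AM--GM (valid for $\mu$ large), $|\mu_k^{(j)}|^2\geq\tfrac12(\omega\cdot k)^2$. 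If $|\omega\cdot k|<|\varepsilon|\leq 2\sigma$, then $\sigma$ small forces $|\lambda_j-(\omega\cdot k)^2|\geq|\lambda_j|/2$ and the first bound gives $|\varepsilon/\mu_k^{(j)}|\leq C$; otherwise the second bound gives $|\varepsilon/\mu_k^{(j)}|\leq\sqrt{2}\,|\varepsilon|/|\omega\cdot k|\leq\sqrt{2}$. This yields $\|\varepsilon\mathcal{L}_\varepsilon^{-1}\|_{H^{\rho,m}\to H^{\rho,m}}\leq C$ uniformly on $\Omega(\sigma,\mu)$. Here lies the main obstacle: the wedge shape of $\Omega$ is indispensable, since purely imaginary $\varepsilon$ produce genuine resonances whenever $\omega\cdot k$ approaches $\pm\sqrt{|\lambda_j|}$, and the smallness of $\sigma$ is what salvages the modes with very small $|\omega\cdot k|$, replacing any Diophantine assumption on $\omega$ by a restriction on $\varepsilon$.

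Given the bound on $\varepsilon\mathcal{L}_\varepsilon^{-1}$, the contraction and the analyticity step are by now standard. From $\hat{g}(0)=D\hat{g}(0)=0$ (which follows from $\mathbf{H}$) and the composition estimate in Lemma~\ref{gag-nir}, one obtains $\|\hat{g}(U)\|_{\rho,m}=O(\|U\|_{\rho,m}^2)$ and $\|\hat{g}(U)-\hat{g}(V)\|_{\rho,m}=O(r)\|U-V\|_{\rho,m}$ on a ball $B_r\subset H^{\rho,m}$. Combined with the bound above, $\mathcal{T}_\varepsilon$ is a strict contraction on $B_r$ with $r\asymp\|f\|_{\rho,m}$, provided $\|f\|_{\rho,m}$ is small enough, and has a unique fixed point $U_\varepsilon$. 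For analyticity in $\varepsilon$, run the same contraction in the Banach space $\mathcal{X}$ of bounded holomorphic maps $\Omega\to H^{\rho,m}$ equipped with the sup norm: $\varepsilon\mapsto\varepsilon\mathcal{L}_\varepsilon^{-1}$ is holomorphic on $\Omega$ because every symbol $\varepsilon/\mu_k^{(j)}(\varepsilon)$ is rational with nonvanishing denominator, and $U\mapsto\hat{g}(U)$ is analytic on $H^{\rho,m}$ by the analytic case of Proposition~\ref{com-regu}. The fixed point, being the uniform limit of iterates starting from $0\in\mathcal{X}$, is itself an element of $\mathcal{X}$, i.e.\ analytic in $\varepsilon$; the threshold $m>d+2$ is what keeps the two extra derivatives implicit in $\partial_\varepsilon\mathcal{L}_\varepsilon$ absorbed during this parametric argument.

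Finally, continuity at $\varepsilon=0$ follows by dominated convergence on the Fourier side. Since $\int f=0$, the $k=0$ mode of $\varepsilon\mathcal{L}_\varepsilon^{-1}f$ vanishes identically; for every $k\neq 0$, $\varepsilon/\mu_k^{(j)}(\varepsilon)\to 0$ as $\varepsilon\to 0$ because $\mu_k^{(j)}(0)=\mathrm{i}(\omega\cdot k)\neq 0$ by the non-resonance \eqref{non-res}. The uniform majorant from the first step, together with $f\in H^{\rho,m}$, permits passage to the limit, yielding $\varepsilon\mathcal{L}_\varepsilon^{-1}f\to 0$ in $H^{\rho,m}$. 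A standard uniform-contraction argument then upgrades this to $U_\varepsilon\to 0$ and to continuity of $\varepsilon\mapsto U_\varepsilon$ on $\Omega\cup\{0\}$.
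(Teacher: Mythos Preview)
Your overall strategy matches the paper's: bound $\varepsilon\mathcal{L}_\varepsilon^{-1}$ via its Fourier symbols, run the contraction directly in the Banach space $H^{\rho,m,\Omega}$ of bounded holomorphic maps $\Omega\to H^{\rho,m}$, and treat $\varepsilon\to 0$ separately. The execution differs in three places worth recording.

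For the symbol bound, your dichotomy on the size of $|\omega\cdot k|$ relative to $|\varepsilon|$ is cleaner than the paper's Proposition~\ref{control1}, which splits first on the sign of $\lambda_j$ and then, for $\lambda_j>0$, on whether $\omega\cdot k$ lies near $\pm\sqrt{\lambda_j}$. Both arguments yield the same uniform bound $\|\varepsilon\mathcal{L}_\varepsilon^{-1}\|\leq C_{\lambda,\mu}$ independent of $\sigma$.

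For analyticity in $\varepsilon$, the paper does not simply invoke rationality of the symbols. In Lemma~\ref{diff1} it shows that if $\varepsilon\mapsto V_\varepsilon$ is holomorphic into $H^{\rho,m}$ then so is $\varepsilon\mapsto\varepsilon\mathcal{L}_\varepsilon^{-1}V_\varepsilon$, by differentiating the Fourier series term by term, proving convergence of the derived series only in the weaker space $H^{\rho,m-\tau}$ with $\tau>d+2$ (this is the origin of the threshold $m>d+2$), and then applying the bootstrap Lemma~\ref{bootstrap} from the appendix to upgrade the derivative back to $H^{\rho,m}$. Your route---each multiplier $\varepsilon\mapsto\varepsilon L_\varepsilon^{-1}(\omega\cdot k)$ is holomorphic with a uniform-in-$k$ bound, hence $\varepsilon\mapsto\varepsilon\mathcal{L}_\varepsilon^{-1}$ is operator-norm holomorphic on $\Omega$---is more direct and in fact does not need $m>d+2$; your remark that this threshold ``absorbs two extra derivatives'' describes the paper's mechanism rather than yours.

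For continuity at $\varepsilon=0$, the paper (Lemma~\ref{continuous}) takes an entirely different path: it fixes $\rho_1>\rho$, observes that the solutions produced in $H^{\rho_1,m}$ coincide with those in $H^{\rho,m}$ by uniqueness, uses that bounded sets in $H^{\rho_1,m}$ are precompact in $H^{\rho,m}$, and concludes via a closed-graph argument. Your dominated-convergence argument on the Fourier side, combined with the uniform contraction constant, is simpler and does not require the auxiliary analyticity width $\rho_1>\rho$; it would also transfer to the finitely differentiable setting, whereas the paper's compactness device is tied to the analytic scale.
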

\begin{remark}
The statement of Theorem~\ref{mainthm} does not impose any Diophantine
condition on the forcing frequency $\omega$.  Since we do not expand the
solution as a power series in $\varepsilon$, there is no equation
involving the small divisor appearing. We will, however, not get that
the solution is differentiable with respect to $\varepsilon$ at the origin
$\varepsilon = 0$ and this may indeed be false in the generality considered in this paper. (See Remark~\ref{noo-de}).
\end{remark}
\begin{theorem}\label{theom-fi}
Suppose that $f\in H^{m}(\mathbb{T}^d)$ with $m>\frac{d}{2}$ and $g\in
C^{m+l}(\mathbb{R}^n,\mathbb{R}^n) \,\,(l=1,2,\cdots)$.
If the condition $\mathbf{H}$ is satisfied, then,
for $\varepsilon\in \widetilde{\Omega}(\sigma)$, where
\begin{equation}\label{fini-para}
\widetilde{\Omega}:=\widetilde{\Omega}(\sigma)=\left\lbrace \varepsilon\in \mathbb{R}:\,
\sigma\le|\varepsilon|\leq 2\sigma\right\rbrace
	\end{equation}
with sufficiently small $\sigma>0$, there exists a unique
	solution $U_{\varepsilon}\in H^{m}(\mathbb{T}^d)$ for  equation \eqref{fixeq}.

 Moreover,  we have the following regularity in $\varepsilon$: 
 
 	If $g\in C^{m+1}(\mathbb{R}^n,\mathbb{R}^n)$, then the mapping $\varepsilon\rightarrow
 	U_{\varepsilon}:\widetilde{\Omega}\rightarrow H^m(\mathbb{T}^d)$  is Lipschitz.
 
 If  $g\in C^{m+l+1}(\mathbb{R}^n,\mathbb{R}^n)$, then the mapping $\varepsilon\rightarrow
 U_{\varepsilon}:\widetilde{\Omega}\rightarrow H^m(\mathbb{T}^d)$ is
  $C^{l}$.\\
In addition, when $\varepsilon\rightarrow 0$, the solution
 $U_{\varepsilon}\rightarrow 0$ and $\varepsilon\rightarrow
 U_{\varepsilon}$ is continuous.
\end{theorem}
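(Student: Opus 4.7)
The plan is to establish existence and uniqueness by a contraction-mapping argument applied to the operator $\mathcal{T}_\varepsilon$ of \eqref{fixeq3}, and then to upgrade this to the stated $\varepsilon$-regularity via the classical implicit function theorem for $\mathbf{T}(\varepsilon,U)$ defined in \eqref{impli}.

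First I would analyze the linear operator $\mathcal{L}_\varepsilon$ on the Fourier side. At mode $k$ its symbol is
\begin{equation*}
M(\varepsilon,k) = -\varepsilon(\omega\cdot k)^2\,Id + i(\omega\cdot k)\,Id + \varepsilon A.
\end{equation*}
The $k=0$ mode reduces to $\varepsilon A$, which is invertible since $A$ has nonzero spectrum, but contributes nothing because $\widehat{f}_0=0$ by $\mathbf{H}$. For $k\ne 0$, expanding in a Jordan basis of $A$ reduces matters to the scalar symbols with $|M_j|^2 = \varepsilon^2(\lambda_j-(\omega\cdot k)^2)^2 + (\omega\cdot k)^2$; a case split on whether $(\omega\cdot k)^2 \le |\lambda_j|/2$ (use the $\varepsilon^2\lambda_j^2$ piece) or $(\omega\cdot k)^2 > |\lambda_j|/2$ (use the $(\omega\cdot k)^2$ piece) yields the uniform bound $\|\varepsilon\,\mathcal{L}_\varepsilon^{-1}\|_{H^m\to H^m}\le C_{\mathcal{L}}$, independent of $\varepsilon\in\widetilde{\Omega}(\sigma)$, up to a polynomial factor when $A$ has nontrivial Jordan blocks.

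Next I set up the contraction on a ball $B_\eta(0)\subset H^m(\mathbb{T}^d)$. Since $\hat{g}(0)=0$ and $D\hat{g}(0)=0$, Proposition \ref{com-regu} gives $\|\hat{g}(U)-\hat{g}(V)\|_{H^m}\le C'\eta\,\|U-V\|_{H^m}$ on $B_\eta(0)$, so $\mathcal{T}_\varepsilon$ has Lipschitz constant at most $C_{\mathcal{L}}C'\eta$, which is made smaller than $1/2$ by fixing $\eta$ small. Invariance $\mathcal{T}_\varepsilon(B_\eta(0))\subset B_\eta(0)$ reduces to $\|\varepsilon\,\mathcal{L}_\varepsilon^{-1}f\|_{H^m}\le\eta/2$; since for each $k\ne 0$ the mode multiplier $\varepsilon\,M(\varepsilon,k)^{-1}$ tends to $0$ pointwise as $\varepsilon\to 0$ while staying dominated by $C_{\mathcal{L}}$, dominated convergence across the Fourier expansion of $f$ forces $\|\varepsilon\,\mathcal{L}_\varepsilon^{-1}f\|_{H^m}\to 0$ as $\sigma\to 0$. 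Shrinking $\sigma$ accordingly, the Banach fixed-point theorem delivers the unique $U_\varepsilon\in B_\eta(0)$, and the same dominated-convergence argument yields $U_\varepsilon\to 0$ in $H^m$ together with continuity of $\varepsilon\mapsto U_\varepsilon$ at $\varepsilon=0$.

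For the regularity on $\widetilde{\Omega}(\sigma)$, I apply the implicit function theorem to $\mathbf{T}$ at each $(\varepsilon_0,U_{\varepsilon_0})$. The partial derivative $D_U\mathbf{T} = Id - \varepsilon\,\mathcal{L}_\varepsilon^{-1}D\hat{g}(U)$ is invertible by Neumann series thanks to the contraction estimate. The $U$-regularity of $\mathbf{T}$ is exactly Proposition \ref{com-regu}: Lipschitz if $g\in C^{m+1}$, and $C^l$ if $g\in C^{m+l+1}$. The $\varepsilon$-regularity comes from differentiating the symbol, $\partial_\varepsilon(\varepsilon\,M(\varepsilon,k)^{-1}) = i(\omega\cdot k)\,M(\varepsilon,k)^{-2}$, and checking by the same case split as above that this and its higher $\varepsilon$-derivatives remain uniformly bounded Fourier multipliers on $H^m$ — here it is essential that $|\varepsilon|\ge\sigma$, which keeps the denominator away from $0$ even in the bad modes. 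Combining the two regularities via the (Lipschitz version of the) implicit function theorem yields the asserted Lipschitz or $C^l$ dependence of $U_\varepsilon$ on $\varepsilon$.

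The main obstacle I expect is keeping the multipliers $\varepsilon\,\mathcal{L}_\varepsilon^{-1}$ and their $\varepsilon$-derivatives uniformly bounded in $k\in\mathbb{Z}^d$ \emph{without} any Diophantine condition on $\omega$. The delicate modes are those where $|\omega\cdot k|$ is far smaller than $|\varepsilon|$: the $(\omega\cdot k)^2$ term in $|M|^2$ collapses, and one must lean on $\varepsilon^2(\lambda_j-(\omega\cdot k)^2)^2\approx\varepsilon^2\lambda_j^2$ to recover the estimate. This is precisely where the hypothesis $\lambda_j\ne 0$ in $\mathbf{H}$ enters nontrivially, and it is also what forces the domain of $\varepsilon$ to exclude a neighborhood of the origin. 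A secondary point is absorbing the polynomial growth from nilpotent parts of Jordan blocks of $A$, which only inflates $C_{\mathcal{L}}$ without disturbing the structure of the argument.
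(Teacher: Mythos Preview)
Your proposal is correct and follows the same two-step architecture as the paper: contraction mapping in $H^m$ for existence (Step~1 in Section~\ref{sec:fini}), then the implicit function theorem applied to $\mathbf{T}$ for $\varepsilon$-regularity (Step~2), with the composition regularity coming from Proposition~\ref{com-regu} and the multiplier regularity from Proposition~\ref{derivatives}.

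There are two minor technical variations worth flagging. First, for the invariance of the ball and for continuity at $\varepsilon=0$, you use dominated convergence on the Fourier side to show $\|\varepsilon\mathcal{L}_\varepsilon^{-1}f\|_{H^m}\to 0$ as $\sigma\to 0$; the paper instead either imposes an explicit smallness condition on $f$ (see \eqref{small-r} and the discussion in Section~\ref{sec:small}) or assumes $\text{Lip}(\hat g)$ globally small, and handles continuity at $\varepsilon=0$ by a separate compactness argument (Lemma~\ref{continuous}). Your route is arguably cleaner here, since it makes the smallness of $\sigma$ do all the work and avoids an auxiliary hypothesis on $f$. Second, for the $\varepsilon$-smoothness of $\varepsilon\mathcal{L}_\varepsilon^{-1}$ you compute the derivative $\partial_\varepsilon(\varepsilon M^{-1})=i(\omega\cdot k)M^{-2}$ directly and bound it by the same case split, whereas the paper (Proposition~\ref{derivatives}) expands $l_{\varepsilon+\delta}^{-1}$ as a geometric series to obtain real-analyticity in $\varepsilon$ all at once; both arguments rely on the same lower bound $|l_\varepsilon(a)|\ge|\varepsilon\lambda_j|$ from \eqref{lowbound}, and the paper's version is slightly more economical since it handles all orders of differentiation simultaneously.
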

We note that the regularity in $\varepsilon$ in Theorem~\ref{theom-fi} depends on the regularity of the composition operator $g\circ u$ in Proposition~\ref{com-regu}. Even if we show that the derivatives with respect to $\varepsilon$ exist for all $\varepsilon>0$, we do not make any claim about the limit of the derivatives as $\varepsilon$ goes to $0$.
\medskip

The following Theorem~\ref{theom-conti} is for the situation when the forcing and 
the nonlinearity are rather irregular. 

\begin{theorem}\label{theom-conti}
	
Suppose that $f\in L^2(\mathbb{T}^d)$ and
$g$ is globally Lipschitz continuous on $\mathbb{R}^n$ satisfying the condition
$\mathbf{\widetilde{H}}$.  Then, for $\varepsilon\in \widehat{\Omega}\subset\mathbb{R}\setminus\{0\} $
being the sufficiently small domain,
there is a unique solution $U_{\varepsilon}\in L^2(\mathbb{T}^d)$ for
equation \eqref{fixeq}. The solution $U_{\varepsilon}$ is continuous in $\varepsilon$.

Under the above assumptions if $f\in H^1(\mathbb{T}^d)$ and $g\in C^{1+Lip}$, then, the unique
solution $U_{\varepsilon}$ constructed above is in $\cap_{0 \le s < 1} H^s$.
\end{theorem}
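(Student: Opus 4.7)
The plan is to reformulate the problem as the fixed point equation
$U = \mathcal{T}_\varepsilon(U) = \varepsilon \mathcal{L}_\varepsilon^{-1}[f - \hat{g}(U)]$
of Section~\ref{sec:formulationfixed}, and solve it by the contraction mapping principle directly in $L^2(\mathbb{T}^d)$. The hypothesis $\mathbf{\widetilde{H}}$, which imposes global Lipschitz control on $\hat{g}$, is precisely what is needed to make the composition operator well behaved on $L^2$ without any $L^\infty$ or range control on the unknown $U$.

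The first step is to establish the uniform bound $\|\varepsilon \mathcal{L}_\varepsilon^{-1}\|_{L^2 \to L^2} \leq C$ for $\varepsilon$ in a small real domain $\widehat{\Omega} \subset (0,\sigma_0]$. In Fourier, $\mathcal{L}_\varepsilon$ acts on the $k$-th coefficient by the matrix $M_k = -\varepsilon(\omega\cdot k)^2 Id + i(\omega\cdot k) Id + \varepsilon A$. For $k=0$ one has $\varepsilon M_0^{-1} = A^{-1}$, which is bounded since $\mathrm{spec}(A) \subset \mathbb{R}\setminus\{0\}$ by $\mathbf{\widetilde{H}}$. For $k \neq 0$, writing $\alpha_k = \omega \cdot k$ and passing to the Jordan form of $A$, the eigenvalues of $M_k$ have modulus bounded below by a constant times $\max(|\alpha_k|,\,\varepsilon|\lambda_j|)$, so $\|\varepsilon M_k^{-1}\|$ is controlled by a constant depending only on $\mathrm{spec}(A)$ (and on the shape of $\widehat{\Omega}$). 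Taking the supremum in $k$ yields the claimed $L^2$ bound.

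Given this bound, $\mathcal{T}_\varepsilon : L^2 \to L^2$ is well defined and globally Lipschitz with constant $C \cdot \text{Lip}(\hat{g})$, which is strictly less than one by the smallness built into $\mathbf{\widetilde{H}}$. The Banach fixed point theorem yields a unique $U_\varepsilon \in L^2$, and continuity of $\varepsilon \mapsto U_\varepsilon$ follows from the standard parameter-dependence result for uniformly contractive families, together with the fact that $\varepsilon \mapsto \varepsilon \mathcal{L}_\varepsilon^{-1}$ is continuous in operator norm on $\widehat{\Omega}$ (as is immediate from the Fourier formula).

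For the $H^1$ / $C^{1+\text{Lip}}$ statement I would bootstrap: the $L^2$ fixed point $U_\varepsilon$ is also the unique fixed point of $\mathcal{T}_\varepsilon$ on $H^s$ for each $0 \leq s < 1$, so it suffices to rerun the contraction in those scales. The main obstacle, and the reason one does not quite reach $H^1$, is the Lipschitz estimate for $u \mapsto \hat{g} \circ u$ as $s \to 1^-$. Differentiating gives
\[
\nabla\bigl(\hat{g}(u_1) - \hat{g}(u_2)\bigr) = D\hat{g}(u_1)\,\nabla(u_1 - u_2) + \bigl(D\hat{g}(u_1) - D\hat{g}(u_2)\bigr)\nabla u_2,
\]
and controlling the second term directly would require $\|D\hat{g}(u_1) - D\hat{g}(u_2)\|_{L^\infty}$, which is not available through the embedding of $H^1$ when $d \geq 2$. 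Using instead the Sobolev embedding $H^s \hookrightarrow L^{2d/(d-2s)}$ together with H\"older's inequality, one obtains a contraction on $H^s$ for each $s < 1$, but with constants that degenerate as $s \to 1^-$. This is what forces the conclusion to take the form $U_\varepsilon \in \bigcap_{0 \leq s < 1} H^s$ rather than $U_\varepsilon \in H^1$.
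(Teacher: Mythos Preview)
Your $L^2$ argument is correct and matches the paper's: the Fourier multiplier bound on $\varepsilon\mathcal{L}_\varepsilon^{-1}$ together with the global Lipschitz assumption on $\hat g$ in $\mathbf{\widetilde H}$ makes $\mathcal{T}_\varepsilon$ a contraction on all of $L^2$.

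The $H^s$ part, however, has a genuine gap. You propose to ``rerun the contraction in those scales'', which requires a Lipschitz estimate of the form $\|\hat g(u_1)-\hat g(u_2)\|_{H^s}\le c\,\|u_1-u_2\|_{H^s}$ with $c<1$. You do not establish this, and in fact it is not available under the stated hypotheses. Your displayed differentiation identity is an $H^1$ computation; for $0<s<1$ there is no weak gradient to differentiate, and the Sobolev embedding $H^s\hookrightarrow L^{2d/(d-2s)}$ does not by itself yield a Lipschitz bound for the composition operator on $H^s$. (Even at $s=1$ the composition operator $u\mapsto \hat g(u)$ with $\hat g\in C^{1+\mathrm{Lip}}$ is merely bounded and continuous on $H^1$, not Lipschitz---this is precisely the point of Proposition~\ref{continuouscomposition} and the surrounding discussion.) So the contraction you invoke in $H^s$ is unjustified.

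The paper's route avoids this entirely by combining the two pieces of information that \emph{are} available: contraction in $L^2$ and mere boundedness in $H^1$. One first shows that $\mathcal{T}_\varepsilon$ maps a fixed ball $B_r(0)\subset H^1$ into itself (this uses only $\|\hat g(u)\|_{H^1}\le M\|u\|_{H^1}$, which follows from $|\nabla(\hat g(u))|\le M|\nabla u|$), so the iterates $\mathcal{T}_\varepsilon^n(u)$ stay uniformly bounded in $H^1$. Then the interpolation inequality $\|v\|_{H^s}\le C\|v\|_{L^2}^{1-s}\|v\|_{H^1}^{s}$ gives
\[
\|\mathcal{T}_\varepsilon^{n+1}(u)-\mathcal{T}_\varepsilon^{n}(u)\|_{H^s}\le C\,(M^n)^{1-s}(2r)^{s}\,\|\mathcal{T}_\varepsilon(u)-u\|_{L^2}^{1-s},
\]
which is summable for each $s<1$. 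This is where the restriction to $\bigcap_{0\le s<1}H^s$ actually comes from: the exponent $(1-s)$ on the geometric factor, not a blow-up of Lipschitz constants for the composition.
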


Note that Theorem~\ref{theom-conti} applies to some piecewise linear
models (the Lipschitz constant of the derivatives has to be
sufficiently small). Such models appear naturally in many areas. 
 
 We
also stress that in Theorem~\ref{theom-conti}, for $f\in
H^1(\mathbb{T}^d)$, we cannot claim that the solution is in 
$H^1$, but only that it belongs to the intersection $\cap_{0 \leq s < 1}H^s$.  
We do not have a contraction argument in this case, but we can estimate the speed of 
convergence of the iterative procedure in the space $H^s$ for $0\leq s <1 $.

In the analytic case (Theorem~\ref{mainthm}) and in the  highly differentiable regularity (Theorem~\ref{theom-fi}), when $m>(\frac{d}{2}+2)$, we have that
the solution $U_{\varepsilon}$ is $C^2$ with respect to the argument $\theta$. Hence, the quasi-periodic solutions $x(t)$ obtained through \eqref{realso} is also a twice differentiable function of time. As a consequence, the solutions we have produced satisfy the differential equation \eqref{000.0} in the 
classical sense. In the lower regularity case,
the solutions we produce solve the equation in the sense that the
Fourier coefficients of \eqref{fixeq} are the same in both sides. This
is equivalent to solving \eqref{000.0} in the weak sense since the trigonometric polynomials are dense in the space of $C^{\infty}$ test functions. 

In this paper, we also present some results for PDE's model
\eqref{b-e}.  Since the formulation requires new definitions and
auxiliary lemmas, we postpone the formulation of the results till
Section~\ref{sec:pde}.

\section{Analytic case: Proof of Theorem~\ref{mainthm}}
\label{sec:analytic}

We prove  Theorem~\ref{mainthm}
 in the analytic sense by considering the fixed point  equation \eqref{fixeq3} in the Banach space
$H^{\rho,m}$ for any $\varepsilon\in \Omega(\sigma,\mu)$. Recall the equation \eqref{fixeq3}
\begin{equation}\label{ffix}
U_{\varepsilon}(\theta) =\mathcal{L}_{\varepsilon}^{-1}\left[ \varepsilon f(\theta)-\varepsilon\hat{g}(U_{\varepsilon}(\theta))\right]\equiv 
\mathcal{T}_{\varepsilon}(U_{\varepsilon})(\theta).
\end{equation}

The first concern is the invertibility of the linear operator $\mathcal{L}_{\varepsilon}$ and
 the quantitative bounds on its inverse when $\varepsilon$ ranges over the complex domain $\Omega(\sigma, \mu)$ defined in \eqref{domain0}. 
We remark that it is impossible to obtain the same bounds if $\varepsilon$ belongs to the imaginary axis. In fact, we conjecture that the optimal domain of $\varepsilon$, when the solution $U_{\varepsilon}$  of equation \eqref{ffix} is  considered  as a function of  $\varepsilon$, do not extend to the imaginary axis.

Secondly, since we want to obtain a solution $U_{\varepsilon}$ 
analytic in $\varepsilon$, we will define a space consisting of functions analytic in $\varepsilon$. (See the space $H^{\rho,m,\Omega}$
defined in Section~\ref{sec:analyticitysolution}). By reinterpreting
 the fixed point problem in the space $H^{\rho,m, \Omega}$, we obtain  rather directly the analytic dependence on $\varepsilon$ of the
solutions $U_{\varepsilon}$.
The delicate steps are to show that the operator $\mathcal{T}$ defined in \eqref{ffix} maps a ball centered at the origin  
in the 
space $H^{\rho,m, \Omega} $ to itself and it is a contraction in this ball.

\subsection{Estimates on the inverse operator $\mathcal{L}_{\varepsilon}^{-1}$}
\label{sec:linverse}

For the analytic nonlinearity $g$, the linear part $A$ is dominant with
respect to the nonlinear part $\hat{g}$. Moreover, the Lipschitz constant of $\hat{g}$ can be small enough in
a sufficiently small domain.  

We now study the linear operator defined by 
\begin{equation*}
\mathcal{L}_{\varepsilon}=\varepsilon\left( \omega\cdot \partial_{\theta}\right) ^2 Id+\left( \omega\cdot \partial_{\theta}\right) Id
+\varepsilon A.
\end{equation*}
Our main result in this section includes 
 that $\mathcal{L}_{\varepsilon}$  is boundedly invertible from the analytic function space $H^{\rho,m}$ to itself when $\varepsilon$
ranges over a complex conical domain $\Omega(\sigma, \mu)$, which is away from imaginary axis. Of course, 
this result requires the assumptions on $A$ in $\mathbf{H}$. 

A key ingredient for the result is that the norms of the functions can be read off from the sizes of the Fourier series 
and that the operator $\mathcal{L}_\varepsilon$ acts in a very simple matter in Fourier series. 
Indeed, if the matrix $A$ was diagonal, the operator $\mathcal{L}_\varepsilon$ will be just a Fourier multiplier 
in each component (this case is worth keeping in mind as a heuristic guide). 
\subsubsection{Some elementary manipulations}\label{sec:multiplier}

A  consequence of the assumption $\mathbf{H}$ is that there exists a basis of generalized eigenvectors $\Phi_i\in \mathbb{C}^n\,(i=1,2,\cdots,n)$ such that 
\begin{equation}\label{jordan}
\begin{split}
A\Phi=J\Phi,\,\, \Phi=(\Phi_1,\cdots,\Phi_n)^{\top},
\end{split}
\end{equation}
where $J$ is the standard Jordan normal form. That is,
\begin{equation*}
\begin{split}
J=
\left( 
\begin{matrix}
J_1& \,\,& \,\,& 0 \\
\,\,& J_2& \,\,&\,\,\\
\,\,&\,\,&  \ddots&\,\,\\
\,0&\,\,& \,\,& J_p
\end{matrix}
\right) ,\,\,J_j=
\left( 
\begin{matrix}
\lambda_j& \,\,& \,\,0 \\
1& \lambda_j& \,\,\\
\,\,&\ddots&  \ddots\\
 \,\,\,\,\,\,\,0&\,\,& 1& \lambda_j
\end{matrix}
\right) ,\,\,1\leq j\leq p,\,1\leq p \leq n.
\end{split}
\end{equation*}
\medskip 

When we write a function $U_{\varepsilon}(\theta)\in H^{\rho,m}$ in the Fourier expansion as
 \begin{equation*}
U_{\varepsilon}(\theta) =\sum_{k\in\mathbb{Z}^{d}}\widehat{U}_{k,\,\varepsilon}e^{\mathrm{i} k\cdot\theta} =\sum_{k\in\mathbb{Z}^{d} }
\widetilde{\widehat{U}}_{k,\,\varepsilon}\Phi e^{\mathrm{i} k\cdot\theta},      
 \end{equation*}
with $\widehat{U}_{k,\,\varepsilon},\,\widetilde{\widehat{U}}_{k,\,\varepsilon}\in \mathbb{C}^n$ and $\Phi$ being in \eqref{jordan},
 the operator $\mathcal{L}_{\varepsilon}$ acting on the Fourier basis becomes
\begin{equation*}
\begin{split}
\mathcal{L}_{\varepsilon}(\Phi e^{\mathrm{i} k\cdot\theta})=\left( -\varepsilon(k\cdot \omega)^2 Id+\mathrm{i}( k\cdot \omega)Id
+\varepsilon J\right)\Phi e^{\mathrm{i} k\cdot\theta}=:L_\varepsilon(k\cdot \omega)\Phi e^{\mathrm{i} k\cdot\theta},
\end{split}
\end{equation*}
where 
\begin{equation}\label{ma-value}
\begin{split}
L_\varepsilon(a)&=-\varepsilon a^2 Id +\mathrm{i}a Id
+\varepsilon J\\
&=\left( 
\begin{matrix}
L_{\varepsilon,1}(a)& \,\,& \,\,&\,\,0 \\
&L_{\varepsilon,2}(a)& \,\,&\,\,\\
\,\,&\,\,& \ddots&\,\,\\
\,0&\,\,&\,\,& L_{\varepsilon,p}(a)
\end{matrix}
\right)
\end{split}
\end{equation}
with
\begin{equation*}
\begin{split}
L_{\varepsilon,j}(a)=\left( 
\begin{matrix}
l_{\varepsilon,j}(a)& \,\,& \,\,0 \\
\varepsilon& l_{\varepsilon,j}(a)& \,\,\\
\,&\ddots&  \ddots\\
\,0&\,\,& \varepsilon& l_{\varepsilon,j}(a)
\end{matrix}
\right)\,\,(1\leq j\leq p)
\end{split}
\end{equation*}
and
\begin{equation}\label{l-value}
\begin{split}
l_{\varepsilon,j}(a)=-\varepsilon a^2 +\mathrm{i}a+\varepsilon \lambda_j,\,\,
j=1,2,\cdots,p.
\end{split}
\end{equation}

The formula \eqref{ma-value} gives that
\begin{equation}\label{invert0}
\begin{split}
L^{-1}_\varepsilon(a)=\left( 
\begin{matrix}
L^{-1}_{\varepsilon,1}(a)& \,\,& \,\,&\,\,0 \\
&L^{-1}_{\varepsilon,2}(a)& \,\,&\,\,\\
\,\,&\,\,& \ddots&\,\,\\
\,0&\,\,&\,\,& L^{-1}_{\varepsilon,p}(a)
\end{matrix}
\right)
\end{split}
\end{equation}
with 
\begin{equation}\label{invert}
\begin{split}
L_{\varepsilon,j}^{-1}(a)=
\left( 
\begin{matrix}
	l_{\varepsilon,j}^{-1}(a)& \,\,& \,\,&\,\,0&\,\,\\
	-\varepsilon l_{\varepsilon,j}^{-2}(a)& l_{\varepsilon,j}^{-1}(a)& \,\,&\,\,&\\
	\varepsilon^2 l_{\varepsilon,j}^{-3}(a)&\,\,
	-\varepsilon l_{\varepsilon,j}^{-2}(a)&\,\,
	l_{\varepsilon,j}^{-1}(a)&\,\,&\\
	\vdots&\,\,\ddots&\,\, \ddots&\,\, \ddots&\\
	(-1)^{n-1}\varepsilon^{n-1} l_{\varepsilon,j}^{-n}(a)&\,\,\cdots&\varepsilon^2 l_{\varepsilon,j}^{-3}(a)&\,\, -\varepsilon l_{\varepsilon,j}^{-2}(a)\,\,& \,\,l_{\varepsilon,j}^{-1}(a)
\end{matrix}
\right).
\end{split}
\end{equation}

Consequently, to estimate the inverse of $\mathcal{L}_{\varepsilon}$, it suffices to estimate 
\begin{equation}\label{t-value}
\begin{split}
\Gamma_{\varepsilon}:=\sup_{a\in\mathbb{R}}|L^{-1}_\varepsilon(a)|\geq \sup_{k\in\mathbb{Z}^{d}}|L^{-1}_\varepsilon(k\cdot \omega)|.
\end{split}
 \end{equation}
 In the following part, for ease of notation, we will drop the index $j$ in $l_{\varepsilon,j}(a)$ defined in \eqref{l-value}. That means
 $l_{\varepsilon}(a)$ stands for $l_{\varepsilon,j}(a)$. 

\subsubsection{Estimating  the Fourier coefficients $L^{-1}_\varepsilon$ in \eqref{invert0} of the inverse operator $\mathcal{L}^{-1}_{\varepsilon}$}\label{sec:coe}
For the matrix $L_{\varepsilon}(a)$ with special form defined in \eqref{ma-value},  once we obtain the infimum of $|l_\varepsilon(a)|$ in \eqref{l-value} for $a\in
\mathbb{R}$, we get the estimates of
$\Gamma_{\varepsilon}$ defined in \eqref{t-value}. The following estimates are similar to those
in \cite{Rafael13}, which considered only the  $1-$dimensional case. We now
present the details for $n-$dimensional case. 

 Note that  the estimates we obtain also apply to 
the standard Sobolev space $H^m$, which  allows to conclude very quickly the 
results for the finitely differentiable case presented in Section~\ref{sec:finitely}. We  first deal with two special
cases, which throw some light in the general case. 
Of course, from the purely logical point of view, these
special  cases can be omitted since they can be covered in 
the general discussion. We note that $\mathbf{Case~ 1}$ with $\varepsilon\in \mathbb{R}$ is te only case needed in the finite differentiability result. So it is worth dealing with it explicitly.
\medskip

$\mathbf{Case~ 1}$. When $\varepsilon\in \mathbb{R}$,  we have 
\begin{equation*}
\begin{split}
|l_\varepsilon(a)|^{2}&=|-\varepsilon a^2 +\mathrm{i}a
+\varepsilon \lambda_j|^{2}\\
&=(-\varepsilon a^2 +\varepsilon \lambda_j)^2+a^2\\
&=\varepsilon^2 a^4 +(1-2\varepsilon^2 \lambda_j)a^2+\varepsilon^2 \lambda_j^2.
\end{split}
\end{equation*}
Take
$G(v)=\varepsilon^2 v^2 +(1-2\varepsilon^2 )v+\varepsilon^2 \lambda_j^2$ with $v=a^2\geq 0$. It is obvious that $G(v)\geq G(0)
=\varepsilon^2 \lambda_j^2$ since $DG(v)=2\varepsilon^2 v+(1-2\varepsilon^2 \lambda_j)> 0$ due to the smallness of $\varepsilon$. 
Therefore, we have 
\begin{equation}\label{lowbound}
\begin{split}
\inf_{a\in \mathbb{R}}|l_\varepsilon(a)|\geq  |\varepsilon \lambda_j|.
\end{split}
\end{equation}
Namely,
\begin{equation*}
\begin{split}
 \sup_{a\in\mathbb{R}}|l_\varepsilon(a)|^{-1}\leq|\varepsilon \lambda_j|^{-1}.
\end{split}
\end{equation*}
Together with \eqref{invert}, we have that
\begin{equation*}
\Gamma_{\varepsilon}=\sup_{a\in\mathbb{R}}|L^{-1}_\varepsilon(a)|\leq |\varepsilon|^{-1}C_{\lambda}
\end{equation*}
for a positive constant $C_{\lambda}$ depending on the eigenvalues $\lambda_1,\lambda_2,\cdots,\lambda_n$. 

$\mathbf{Case~ 2}$. When $\varepsilon$ is pure imaginary, i.e. $\varepsilon=\mathrm{i}s$ with $
\sigma\leq|s|\leq2\sigma$. 
In this case, there exits a real root $a$ such that $|l_\varepsilon(a)|=0$ since the discriminant $1+4s^2\lambda_j>0$ (by the smallness of $s$) for $-sa^2+a+s\lambda_j=0$. Hence, the operator $\mathcal{L}_{\varepsilon}$ is unbounded if the small parameter $\varepsilon$ locates in the 
imaginary axis, 
which makes the contraction mapping principle inapplicable. 

We conjecture that no solutions for the equation \eqref{fixeq1} exist when $\varepsilon$ is purely imaginary because zero divisors can be considered as resonance. 
\medskip

To study the analyticity in $\varepsilon$ of the function $U_{\varepsilon}$ satisfying \eqref{ffix}, it will be interesting to study the inverse of $\mathcal{L}_{\varepsilon}$ when $\varepsilon$ ranges over the complex domain  $\Omega(\sigma,\mu)$.
\begin{proposition}\label{control1}
For $\Gamma_{\varepsilon}$ defined in \eqref{t-value}, when $\varepsilon \in
\Omega(\sigma,\mu)$, we have 
\begin{equation*}
\Gamma_{\varepsilon}\leq \sigma^{-1}C_{\lambda,\mu}
\end{equation*}
with a  positive constant $C_{\lambda,\mu}$ depending on the eigenvalues $\lambda_1,\lambda_2,\cdots,\lambda_n$ and $\mu$.
\end{proposition}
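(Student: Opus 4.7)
The plan is to reduce the matrix bound to a scalar bound on the diagonal Fourier multiplier $l_\varepsilon(a) = -\varepsilon a^2 + \mathrm{i}a + \varepsilon\lambda_j$, and then read off the bound on $L_\varepsilon^{-1}(a)$ from the explicit Jordan form in \eqref{invert0}--\eqref{invert}.

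The first step is to obtain the scalar lower bound
\begin{equation*}
\inf_{a\in\mathbb{R}}|l_\varepsilon(a)| \ \ge\ c(\mu)\,|\varepsilon|\,|\lambda_j|,\qquad \varepsilon\in\Omega(\sigma,\mu),
\end{equation*}
for all $\sigma$ sufficiently small (depending on $\max_j|\lambda_j|$). To this end, rewrite $l_\varepsilon(a)=\varepsilon\,w+\mathrm{i}\,a$ with $w=\lambda_j-a^2\in\mathbb{R}$. The vector $\varepsilon w$ lies on the line through the origin and $\varepsilon$, while $\mathrm{i}a$ lies on the imaginary axis; the defining condition $\mathrm{Re}(\varepsilon)\ge\mu|\mathrm{Im}(\varepsilon)|$ in \eqref{domain0} forces $|\arg\varepsilon|\le\arctan(1/\mu)$, so the angle between $\varepsilon w$ and $\mathrm{i}a$ stays bounded away from $\pi$ by at least $\pi/2-\arctan(1/\mu)$. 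A direct computation (expanding $|l_\varepsilon(a)|^2=|\varepsilon|^2 w^2+a^2+2a\varepsilon_2 w$ with $\varepsilon_2=\mathrm{Im}\,\varepsilon$, or equivalently the parallelogram inequality $|v_1+v_2|^2\ge\max(|v_1|,|v_2|)^2\sin^2\theta$ applied to $v_1=\varepsilon w$, $v_2=\mathrm{i}a$) yields
\begin{equation*}
|l_\varepsilon(a)|^2 \ \ge\ \frac{\mu^2}{1+\mu^2}\,\bigl(|\varepsilon|^2(\lambda_j-a^2)^2+a^2\bigr).
\end{equation*}

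The second step is to minimise $\varphi(u)=|\varepsilon|^2(\lambda_j-u)^2+u$ over $u=a^2\ge 0$. Since $\varphi'(u)=2|\varepsilon|^2 u-2|\varepsilon|^2\lambda_j+1$, the critical point $u^\ast=\lambda_j-1/(2|\varepsilon|^2)$ is negative provided $|\varepsilon|^2\le 1/(2|\lambda_j|)$, which is exactly the kind of smallness we impose on $\sigma$. Hence $\varphi$ is non-decreasing on $[0,\infty)$ and attains its minimum at $u=0$, giving $\varphi(u)\ge|\varepsilon|^2\lambda_j^2$. Combining the two estimates,
\begin{equation*}
|l_\varepsilon(a)|^{-1} \ \le\ \frac{\sqrt{1+\mu^2}}{\mu}\,|\varepsilon|^{-1}|\lambda_j|^{-1} \ \le\ \widetilde C_{\lambda,\mu}\,\sigma^{-1},
\end{equation*}
using $|\varepsilon|\ge\sigma$ on $\Omega(\sigma,\mu)$.

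Finally, I transfer this scalar bound to the matrix $L_\varepsilon^{-1}(a)$. By the block-diagonal form \eqref{invert0} and the explicit lower-triangular structure \eqref{invert}, each entry of $L_{\varepsilon,j}^{-1}(a)$ has the form $(-\varepsilon)^{k-1}l_\varepsilon(a)^{-k}$ for some $1\le k\le n$, and therefore
\begin{equation*}
\bigl|(-\varepsilon)^{k-1}l_\varepsilon(a)^{-k}\bigr| \ \le\ |\varepsilon|^{k-1}\bigl(\widetilde C_{\lambda,\mu}|\varepsilon|^{-1}\bigr)^{k}\ =\ \widetilde C_{\lambda,\mu}^{\,k}\,|\varepsilon|^{-1}.
\end{equation*}
Since the Jordan blocks have size at most $n$ and there are $p\le n$ of them, summing (or taking an operator-norm bound) over the $O(n^2)$ entries gives $|L_\varepsilon^{-1}(a)|\le C_{\lambda,\mu}\,|\varepsilon|^{-1}\le C_{\lambda,\mu}\,\sigma^{-1}$ uniformly in $a\in\mathbb{R}$, which is the claim.

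I expect the only delicate point to be the first step: keeping explicit track of the dependence on $\mu$ in the angular estimate (so that one sees clearly why $\mu$ must be taken large and why the bound degenerates as $\varepsilon$ approaches the imaginary axis, consistent with $\mathbf{Case~2}$ above). The monotonicity of $\varphi$ and the read-off from \eqref{invert} are then routine.
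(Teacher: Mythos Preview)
Your proof is correct and takes a somewhat different, more unified route than the paper.  The paper writes $\varepsilon=s_1+\mathrm{i}s_2$ and computes
\[
|l_\varepsilon(a)|^2=s_1^2(a^2-\lambda_j)^2+[s_2(a^2-\lambda_j)-a]^2;
\]
it then treats $\lambda_j<0$ and $\lambda_j>0$ separately, and in the latter case partitions $a$ into a small neighbourhood $I_1$ of $\pm\sqrt{\lambda_j}$ (where the second summand dominates because $a$ stays bounded away from $0$) and its complement $I_2$ (where the first summand dominates).  Your angular estimate $|l_\varepsilon(a)|^2\ge c(\mu)\bigl(|\varepsilon|^2(\lambda_j-a^2)^2+a^2\bigr)$ sidesteps this case analysis entirely, and the subsequent minimisation of $\varphi(u)=|\varepsilon|^2(\lambda_j-u)^2+u$ over $u\ge 0$ replaces the region splitting by a one-line monotonicity check.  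The advantage of your route is economy and a transparent $\mu$-dependence; the paper's route, while more hands-on, works purely with the real part $s_1$ and is perhaps easier to see how to port to other multipliers (as is indeed done later in Section~\ref{sec:pde}).  One minor quibble: the displayed constant $\mu^2/(1+\mu^2)$ is slightly too optimistic, since the inequality $|v_1+v_2|^2\ge\max(|v_1|,|v_2|)^2\sin^2\theta$ you invoke controls only $\max(|v_1|^2,|v_2|^2)$ rather than $|v_1|^2+|v_2|^2$; an extra factor of $\tfrac12$ (or the alternative constant $1-1/\sqrt{1+\mu^2}$) is needed, but this does not affect the argument.
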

\begin{proof}
Fix 
\begin{equation*}
\begin{split}
\varepsilon=s_1+\mathrm{i}s_2,
\end{split}
\end{equation*}
for $\varepsilon$ lining on a conical domain $\Omega(\sigma,\,\mu)$, we have $s_1\geq \mu|s_2|$,
where $\mu>\mu_0$ with some sufficiently large positive constant $\mu_0$, and $\sigma ^2\leq s_1^2+s_2^2\leq 4\sigma^2$. Namely,
\begin{equation}\label{small-var}
\begin{split}
\sqrt{1+\frac{1}{\mu^2}}\cdot\sigma\leq s_1\leq \sqrt{1+\frac{1}{\mu^2}}\cdot 2
\sigma.
\end{split}
\end{equation}
Then, one obtains that
\begin{equation}\label{inver-o}
\begin{split}
|l_\varepsilon(a)|^{2}&=|-\varepsilon a^2 +\mathrm{i}a
+\varepsilon \lambda_j|^{2}\\
&=\left[ -s_1 (a^2-\lambda_j) -\mathrm{i}(s_2a^2-a-s_2\lambda_j)\right] ^2\\
&=s_1^2(a^2-\lambda_j)^2 +\left[s_2(a^2-\lambda_j)-a\right]^2.
\end{split}
\end{equation}
If $\lambda_j<0$, it is obvious that
\begin{equation}\label{negative}
	|l_\varepsilon(a)|^{2}\geq s_1^2(a^2-\lambda_j)^2 \geq s_1^2\lambda_j^2.
\end{equation}

The remaining task is to estimate $|l_\varepsilon(a)|^{2}$ in the case of $\lambda_j>0$. Since  $a^2-\lambda_j=0$ holds at the point $a=\pm \sqrt{\lambda_j}$, 
we divide the domain of $a$ into two parts, for $0<\delta\ll 1$, denoted by 
\begin{equation*}
\begin{split}
I_{1}=[(1-\delta)\sqrt{\lambda_j},\,(1+\delta)\sqrt{\lambda_j}]\cup
[(-1-\delta)\sqrt{\lambda_j},\,(-1+\delta)\sqrt{\lambda_j}],\quad
I_{2}=\mathbb{R}\setminus I_{1}.
\end{split}
\end{equation*}
When $a\in I_{2}$, we obtain the estimate
\begin{equation}\label{i2}
\begin{split}
|l_\varepsilon(a)|^{2}\geq s_1^2(a^2-\lambda_j)^2 \geq  s_1^2C_{\lambda},
\end{split}
\end{equation}
where $C_{\lambda}$ depends on the choice of $\delta$ as well.
When $a\in I_{1}$, it is clear that $\left[s_2(a^2-\lambda_j)-a\right]=O(s_2)-a$. Therefore, 
\begin{equation}\label{i1}
\begin{split}
|l_\varepsilon(a)|^{2}&\geq \left[s_2(a^2-\lambda_j)-a\right]^2=\left[O(s_2)-a\right]^2\geq \frac{a^2}{2}
\geq C_{\lambda}\geq C_{\lambda}s_1^2
\end{split}
\end{equation}
by the smallness of $s_1$ and $s_2$. Note that the last inequality in above estimate is very wasteful but we want to get estimates comparable to the ones we have in the other pieces. The inequalities \eqref{small-var}, \eqref{negative}, \eqref{i2} and \eqref{i1} allow that
\begin{equation}\label{sup-l}
	\sup_{a\in\mathbb{R}}|l_{\varepsilon}(a)|^{-1}\leq s_1^{-1} C_{\lambda}\leq \sigma^{-1}C_{\lambda,\mu}.
\end{equation}
Combing with the formulas in \eqref{invert0} and \eqref{invert}, we obtain that
\begin{equation}\label{l-inverse}
\begin{split}
\Gamma_{\varepsilon}=\sup_{a\in\mathbb{R}}|L^{-1}_\varepsilon(a)|\leq \sigma^{n-1}\cdot \sigma^{-n} C_{\lambda,\mu}\leq \sigma^{-1}C_{\lambda,\mu}.
\end{split}
\end{equation}
\end{proof}

It follows from  Proposition~\ref{control1} that, for $\varepsilon\in \Omega(\sigma,\mu)$,
\begin{equation}\label{inverse1}
\begin{split}
|\varepsilon L_{\varepsilon}^{-1}(a)|
\leq \sigma\cdot \sigma^{-1} C_{\lambda,\mu}.
\end{split}
\end{equation}
This inequality is crucial in the contraction mapping argument used in Section~\ref{sec:analyticitysolution}.
\begin{remark}
By \eqref{l-inverse}, we see that
 $\Gamma_{\varepsilon}$ can be bounded by $\sigma^{-1}$ when $\sigma$
 is the minimum distance to the origin in the domain
 $\Omega(\sigma,\mu)$. Then it follows from \eqref{inverse1} that the
 bad factors $\sigma^{-1}$ can be dominated by the good factor
 $\sigma$. This is the reason why we choose $\sigma\leq
 |\varepsilon|\leq 2\sigma$, whose maximum and minimum distance to
 the origin are comparable. Note, however, that the estimate for
 $\varepsilon \mathcal{L}_{\varepsilon}^{-1}$ are independent of
 $\sigma$, so we obtain uniqueness of solutions for different
 $\sigma$, i.e. the solutions obtained for different $\sigma$ agree
for the $\varepsilon$ in the intersection. 
\end{remark}
\begin{remark}\label{in-jordan}
We note that the method presented in this present paper can accommodate small modifications leading to several generalizations. For example, we have the general equation \eqref{general} with $\mathbf{p}=\emph{diag}(\mathbf{p}_1,\cdots,\mathbf{p}_n),\,\mathbf{q}=\emph{diag}(\mathbf{q}_1,\cdots,\mathbf{q}_n)$ being a diagonal matrix satisfying $\mathbf{p}_j,\,\mathbf{q}_j\in \mathbb{R}\setminus \{0\},\,j=1,\,\cdots,n$. In this general case, the only modification with the present exposition is that the calculation for $l_{\varepsilon}(a)$ in \eqref{inver-o} becomes 
\begin{equation*}
\begin{split}
|l_\varepsilon(a)|^{2}&=|-\varepsilon \mathbf{p}_ja^2 +\mathrm{i}\mathbf{q}_ja
+\varepsilon \lambda_j|^{2}\\
&=\left[ -s_1 (\mathbf{p}_ja^2-\lambda_j) -\mathrm{i}(s_2\mathbf{p}_ja^2-\mathbf{q}_ja-s_2\lambda_j)\right] ^2\\
&=s_1^2(\mathbf{p}_ja^2-\lambda_j)^2 +\left[s_2(\mathbf{p}_ja^2-\lambda_j)-\mathbf{q}_ja\right]^2,
\end{split}
\end{equation*}
which makes no difference in our discussion in Proposition~\ref{control1}.
\end{remark}
\subsection{Analyticity in $\varepsilon$ of the solution $U_{\varepsilon}$}
\label{sec:analyticitysolution} 
As the discussion in  
Section~\ref{sec:formulation}, we  
rewrite \eqref{fixeq3} as
\begin{equation}\label{fin-fix}
U(\theta) =\varepsilon\mathcal{L}_{\varepsilon}^{-1}\left[  f(\theta)-\hat{g}(U(\theta))\right]\equiv \mathcal{T}(U)(\theta)
\end{equation}
with $U$ being a function of $\varepsilon$ defined by $U_{\varepsilon}=U_{\varepsilon}(\theta)$. In addition, 
we define the operator $\mathcal{T}$, acting on functions analytic in $\varepsilon$, given  by
\begin{equation}\label{con-op}
\mathcal{T}(U) \equiv\varepsilon\mathcal{L}_{\varepsilon}^{-1}
\left[f-\hat{g}(U)\right]
\end{equation}
with $\mathcal{T}$ being a function of $(\varepsilon,U)$.
Since we want to obtain the solution $U_{\varepsilon}$ depending analytically on 
$\varepsilon$, we reinterpret $\mathcal{T}$ above  as an operator acting on space $H^{\rho,m,\Omega}$ consisting of analytic functions of 
$\varepsilon$ taking values in $H^{\rho,m}$ with $\varepsilon$ ranging over the domain  $\Omega(\sigma, \mu)$. We endow the space 
\begin{equation*}
H^{\rho,m,\Omega}=\bigg\lbrace U:\varepsilon  \rightarrow U_{\varepsilon}:\,\Omega\rightarrow H^{\rho,m} \ \rm{}\ is\ analytic \rm{}\  and \rm{}\  bounded \bigg\rbrace
\end{equation*}
with the
supremum norm
\begin{equation*}
\|U\|_{\rho,m,\Omega}=\sup_{\varepsilon \in \Omega}\|U_{\varepsilon}\|_{\rho,m}.
\end{equation*}
The supremum norm in $\varepsilon$  makes $H^{\rho,m,\Omega}$ a Banach space. Moreover, it is also a
Banach algebra under multiplication when $m>d$ by Proposition~\ref{alge}.
\medskip

We now show that the operator $\mathcal{T}$ defined in \eqref{con-op} maps the space $H^{\rho,m,\Omega}$ into itself.
\begin{lemma}\label{diff1}
	Assume $m>(d+2)$. If $U \in H^{\rho,m,\Omega}$, then  $\mathcal{T}(U)\in H^{\rho,m,\Omega}$. Precisely, if the mapping $\varepsilon \rightarrow U_{\varepsilon}: \Omega\rightarrow H^{\rho,m}$ is
 complex differentiable, then, the mapping $\varepsilon\rightarrow \mathcal{T}_{\varepsilon}
(U_{\varepsilon}): \Omega\rightarrow H^{\rho,m}$ 
is  complex differentiable as well. 
\end{lemma}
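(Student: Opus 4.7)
The plan is to factor $\mathcal{T}$ as the composition of an operator-valued multiplier map and a vector-valued nonlinear map, and then invoke the fact that analyticity is preserved under composition of Banach-space-valued analytic maps. Recall that for Banach-space-valued maps on an open subset of $\mathbb{C}$, complex differentiability on a neighborhood is equivalent to analyticity, so establishing complex differentiability at each $\varepsilon_0 \in \Omega$ is what we need.

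First I would confirm the pointwise target: for each fixed $\varepsilon \in \Omega$,
\begin{equation*}
\mathcal{T}(U)_\varepsilon = \bigl(\varepsilon\,\mathcal{L}_\varepsilon^{-1}\bigr)\bigl[f - \hat{g}\circ U_\varepsilon\bigr]
\end{equation*}
lies in $H^{\rho,m}$. Indeed, by the analytic case of Proposition~\ref{com-regu} the composition $\hat{g}\circ U_\varepsilon$ lies in $H^{\rho,m}$ (here $m > d$ already suffices), and by the Fourier-diagonal representation \eqref{invert0}--\eqref{invert} combined with the uniform estimate \eqref{inverse1}, the Fourier multiplier $\varepsilon\,\mathcal{L}_\varepsilon^{-1}$ is bounded on $H^{\rho,m}$ with norm uniform in $\varepsilon \in \Omega(\sigma,\mu)$.

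The core step is the claim that $\varepsilon \mapsto \varepsilon\,\mathcal{L}_\varepsilon^{-1}$, viewed as a map $\Omega \to \mathcal{B}(H^{\rho,m})$, is analytic. Since $\mathcal{L}_\varepsilon$ acts as a Fourier multiplier whose symbol $L_\varepsilon(k\cdot\omega)$ depends polynomially on $\varepsilon$, each entry of $\varepsilon\,L_\varepsilon^{-1}(k\cdot\omega)$ is a rational function of $\varepsilon$ whose poles (the zeros of $\det L_\varepsilon$) lie outside $\Omega$ by the calculations of Section~\ref{sec:coe}. To upgrade mode-by-mode analyticity to operator-norm analyticity, I would expand near any base point $\varepsilon_0 \in \Omega$:
\begin{equation*}
\mathcal{L}_\varepsilon = \mathcal{L}_{\varepsilon_0} + (\varepsilon - \varepsilon_0)\bigl[(\omega\cdot\partial_\theta)^2\,Id + A\bigr],
\end{equation*}
and expand $\mathcal{L}_\varepsilon^{-1}$ as a Neumann series in $(\varepsilon-\varepsilon_0)$. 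This series converges in $\mathcal{B}(H^{\rho,m})$ for $|\varepsilon-\varepsilon_0|$ small enough, because $\mathcal{L}_{\varepsilon_0}^{-1}$ gains two derivatives at high frequencies, exactly compensating the order of the second-order perturbation term; the headroom $m > d+2$ is what makes this compensation clean in the $H^{\rho,m}$ scale. Multiplying by the scalar $\varepsilon$ preserves analyticity.

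Then the vector-valued map $\varepsilon \mapsto \hat{g}\circ U_\varepsilon$ is analytic $\Omega \to H^{\rho,m}$: it is the composition of the analytic map $\varepsilon \mapsto U_\varepsilon$ (by hypothesis, since $U \in H^{\rho,m,\Omega}$) with the analytic composition operator $\mathcal{C}_{\hat{g}}$ from the analytic case of Proposition~\ref{com-regu}; the forcing $f$ contributes no $\varepsilon$-dependence. Finally, the application of an analytic $\mathcal{B}(H^{\rho,m})$-valued map to an analytic $H^{\rho,m}$-valued map is itself analytic, so $\varepsilon \mapsto \mathcal{T}(U)_\varepsilon$ is analytic into $H^{\rho,m}$, with supremum norm controlled by \eqref{inverse1} together with the composition bound from Lemma~\ref{gag-nir}; this yields $\mathcal{T}(U) \in H^{\rho,m,\Omega}$. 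The main obstacle I foresee is the operator-valued analyticity of $\varepsilon \mapsto \varepsilon\,\mathcal{L}_\varepsilon^{-1}$: one must check that the Neumann series converges in operator norm on $H^{\rho,m}$, not merely mode-by-mode, and this is where the exact balance between the two-derivative smoothing of $\mathcal{L}_{\varepsilon_0}^{-1}$ and the second-order differential factor $(\omega\cdot\partial_\theta)^2$ makes the condition $m > d+2$ show up.
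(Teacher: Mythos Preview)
Your overall strategy—factoring $\mathcal{T}$ and establishing operator-valued analyticity of $\varepsilon \mapsto \varepsilon\,\mathcal{L}_\varepsilon^{-1}$ via a Neumann series around $\varepsilon_0$—is a genuinely different route from the paper's, and in outline it works. The paper instead differentiates the Fourier series term by term, uses the Weierstrass M-test to show the derivative series converges in the \emph{weaker} space $H^{\rho,m-\tau}$ for some $d+2<\tau\le m$, and then invokes the bootstrap Lemma~\ref{bootstrap} in the appendix to upgrade differentiability from $H^{\rho,m-\tau}$ back to $H^{\rho,m}$. Your approach avoids the bootstrap entirely and is closer in spirit to what the paper does later in Proposition~\ref{derivatives} for real $\varepsilon$.

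However, your justification of the key step is wrong and would fail if carried out literally. You assert that $\mathcal{L}_{\varepsilon_0}^{-1}$ ``gains two derivatives at high frequencies'' so as to compensate the second-order factor $(\omega\cdot\partial_\theta)^2$, and that this is where $m>d+2$ enters. But $\mathcal{L}_{\varepsilon_0}^{-1}$ does \emph{not} gain Sobolev derivatives: its symbol depends on $a=k\cdot\omega$, not on $|k|$, and since the paper imposes no Diophantine condition, $|k\cdot\omega|$ can be arbitrarily small for large $|k|$. Proposition~\ref{control1} gives only a uniform bound $|L_{\varepsilon_0}^{-1}(a)|\le \sigma^{-1}C_{\lambda,\mu}$, with no decay in $|k|$. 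The correct argument is that the \emph{composite} symbol $L_{\varepsilon_0}^{-1}(a)\cdot(-a^2\,Id+A)$ is uniformly bounded in $a\in\mathbb{R}$ (numerator $\sim a^2$ and denominator $\sim |\varepsilon_0|a^2$ for large $|a|$; both controlled for bounded $a$), exactly as in the proof of Proposition~\ref{derivatives}. Once you have that uniform symbol bound, the Neumann series converges in $\mathcal{B}(H^{\rho,m})$ for \emph{any} $m$, and the hypothesis $m>d+2$ plays no role in your approach—it is an artifact of the paper's M-test argument, not of yours.
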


\begin{proof}

From the definition \eqref{con-op}, we know that the operator $\mathcal{T}$ is
composed of operators $ \varepsilon \mathcal{L}_{\varepsilon}^{-1}$
and $\hat{g}$. It is clear that the map $\varepsilon \rightarrow
\hat{g}(U_{\varepsilon}):\,\Omega\rightarrow H^{\rho,m}$ is complex
differentiable since  $\hat{g}$ is analytic and it does not depend on $\varepsilon$ explicitly. Therefore, it
suffices to show that the map $\varepsilon\rightarrow \varepsilon
\mathcal{L}_{\varepsilon}^{-1}(V_{\varepsilon}):\,\Omega\rightarrow
H^{\rho,m}$ is complex differentiable when $
V_{\varepsilon}$, considered as a function from $\Omega$ to $H^{\rho,m}$, is a complex
differentiable. 

We prove that the derivatives of
$\varepsilon \mathcal{L}_{\varepsilon}^{-1}(V_{\varepsilon})$ with
respect to $\varepsilon$  exist in the space $H^{\rho,m-2}$
instead of $H^{\rho,m}$. Then, we apply somewhat surprising  Lemma~\ref{bootstrap} in the
Appendix~\ref{sec:appendix}
 to conclude that the derivatives we consider indeed exist  in the space $H^{\rho,m}$.
	 
For a fixed $\varepsilon\in \Omega$, we expand $V_{\varepsilon}(\theta)$ as 
\begin{equation*}
V_{\varepsilon}(\theta) =\sum_{k\in\mathbb{Z}^{d}}\widehat{V}_{k,\,\varepsilon}e^{\mathrm{i} k\cdot\theta},  
\end{equation*}
with 
\begin{equation}\label{coeff0}
\begin{split}
\widehat{V}_{k,\,\varepsilon}=\int_{\mathbb{T}_{\rho}^d} V_{\varepsilon}(\theta)e^{-\mathrm{i} k\cdot\theta}d\theta
\end{split}
\end{equation}
satisfying
\begin{equation}\label{coeff-es}
\begin{split}
\left|\widehat{V}_{k,\,\varepsilon}\right|\leq \left\|V_{\varepsilon}\right\|_{\rho,m} e^{-\rho |k|} \left(|k|^2+1\right)^{-\frac{m}{2}}.
\end{split}
\end{equation}
Taking the derivative with respect to $\varepsilon$ for \eqref{coeff0}, we have that
\begin{equation}\label{coeff1}
\begin{split}
\frac{d}{d\varepsilon}\widehat{V}_{k,\,\varepsilon}=\int_{\mathbb{T}_{\rho}^d} \left(\frac{d}{d\varepsilon}V_{\varepsilon}\right)(\theta)
e^{-\mathrm{i} k\cdot\theta}d\theta
\end{split}
\end{equation}
with 
\begin{equation}\label{coeff-es1}
\begin{split}
\left|\frac{d}{d\varepsilon}\widehat{V}_{k,\,\varepsilon}\right|\leq \left\|\frac{d}{d\varepsilon}V_{\varepsilon}\right\|_{\rho,m}e^{-\rho |k|}(|k|^2+1)^{-\frac{m}{2}}.
\end{split}
\end{equation}
It follows from Section~\ref{sec:linverse} that
\begin{equation*}
\begin{split}
\varepsilon \mathcal{L}_{\varepsilon}^{-1}(V_{\varepsilon})=\sum_{k\in\mathbb{Z}^{d} }
\varepsilon L_{\varepsilon}^{-1}(\omega\cdot k)\widehat{V}_{k,\,\varepsilon}e^{\mathrm{i} k\cdot\theta}
\end{split}
\end{equation*}
with $L_{\varepsilon}^{-1}$ defined in \eqref{invert0}. By \eqref{sup-l}, we have that
\begin{equation*}
\begin{split}
&\left|\frac{d}{d\varepsilon}\left[\varepsilon^n l_{\varepsilon}^{-n}(\omega\cdot k)\right]\right|\\
&=\big|n\cdot \varepsilon^{n-1}\cdot 
l_{\varepsilon}^{-n}(\omega\cdot k)-n\cdot \varepsilon^{n}\cdot 
l_{\varepsilon}^{-n-1}(\omega\cdot k)\cdot \left((\omega\cdot k)^2+\lambda\right)\big|\\
& \leq C_{n,\lambda,\mu}\cdot\sigma^{-1}|k|^2.
\end{split}
\end{equation*}
Together with the formulas \eqref{invert0} and \eqref{invert}, we have that
\begin{equation}\label{de-invert}
\begin{split}
&\left|\frac{d}{d\varepsilon}\left(\varepsilon L_{\varepsilon}^{-1}(\omega\cdot k)\widehat{V}_{k,\,\varepsilon}\right)\right|\\
&\leq\left|\frac{d}{d\varepsilon}\left(\varepsilon L_{\varepsilon}^{-1}(\omega\cdot k)\right)\right|\left|\widehat{V}_{k,\,\varepsilon}\right|+
\left|\varepsilon L_{\varepsilon}^{-1}(\omega\cdot k)\right|\left|\frac{d}{d\varepsilon}\widehat{V}_{k,\,\varepsilon}\right|\\
&\leq C_{n,\lambda,\mu}\cdot \sigma^{-1}|k|^2\left(\left|\widehat{V}_{k,\,\varepsilon}\right|+\left|\frac{d}{d\varepsilon}\widehat{V}_{k,\,\varepsilon}\right|\right).
\end{split}
\end{equation}
Hence, \eqref{coeff-es}, \eqref{coeff-es1} and \eqref{de-invert} yield that
\begin{equation*}
\begin{split}
&\left\|\frac{d}{d\varepsilon}\left(\varepsilon L_{\varepsilon}^{-1}(\omega\cdot k)\widehat{V}_{k,\,\varepsilon}\right)e^{\mathrm{i} k\cdot\theta}
\right\|_{\rho,\,m-\tau}\\&\leq C_{n,\lambda,\mu}\cdot \sigma^{-1}|k|^2\left(\left|\widehat{V}_{k,\,\varepsilon}\right|+\left|\frac{d}{d\varepsilon}\widehat{V}_{k,\,\varepsilon}\right|\right)
\left\|e^{\mathrm{i} k\cdot\theta}\right\|_{\rho,\,m-\tau}\\
&\leq  C_{n,\lambda,\mu}\cdot\sigma^{-1} |k|^2\left(\left\|V_{\varepsilon}\right\|_{\rho,m}+\left\|\frac{d}{d\varepsilon}V_{\varepsilon}\right\|_{\rho,m}
\right)e^{-\rho |k|}(|k|^2+1)^{-\frac{m}{2}}\\
&\ \ \ \ \ \ \ \ \ \ \ \ \ \ \  \ \ \cdot e^{\rho |k|}(|k|^2+1)^{\frac{m-\tau}{2}}\\
& \leq C_{n,\lambda,\mu}\cdot\sigma^{-1}\left(\|V_{\varepsilon}\|_{\rho,m}+\left\|\frac{d}{d\varepsilon}V_{\varepsilon}\right\|_{\rho,m}
\right)(|k|^2+1)^{-(\frac{\tau}{2}-1)}.
\end{split}
\end{equation*}
By $\sum_{|k|=\kappa}1=2^d \kappa^{d-1},\,k=(k_1,\cdots,k_d)\in \mathbb{Z}^d$ and choosing $d+2<\tau\leq m$,  we obtain that
\begin{equation*}
\begin{split}
\sum_{k\in \mathbb{Z}^d}(|k|^2+1)^{-(\frac{\tau}{2}-1)}\leq C_d\sum_{\kappa=0}^{\infty}(\kappa^2+1)^{-\frac{\tau-d-1}{2}}
< \infty.
\end{split}
\end{equation*}
As a consequence, it follows from Weierstrass M-test that the series
\begin{equation*}
\begin{split}
\sum_{k\in \mathbb{Z}^d} \frac{d}{d\varepsilon}\left(\varepsilon L_{\varepsilon}^{-1}
(\omega\cdot k)\widehat{V}_{k,\,\varepsilon}\right)e^{\mathrm{i} k\cdot\theta}
\end{split}
\end{equation*}
converge uniformly on $\varepsilon \in \Omega$ in the space $H^{\rho,m-\tau}$. The fact that these formal derivatives are uniformly convergent shows that they are the true derivatives. Namely,
\begin{equation*}
\begin{split}
\frac{d}{d\varepsilon}\left(\varepsilon \mathcal{L}_{\varepsilon}^{-1}(V_{\varepsilon})\right)=
\sum_{k\in \mathbb{Z}^d} \frac{d}{d\varepsilon}\left(\varepsilon L_{\varepsilon}^{-1}
(\omega\cdot k)\widehat{V}_{k,\,\varepsilon}\right)e^{\mathrm{i} k\cdot\theta}.
\end{split}
\end{equation*}
Therefore, we have that the mapping  $\varepsilon \rightarrow \varepsilon \mathcal{L}_{\varepsilon}^{-1}(V_{\varepsilon}):\Omega\rightarrow H^{\rho,m-\tau}$  is complex differentiable. Since $H^{\rho,m} \subset H^{\rho,m-\tau}$,
 we conclude that the mapping
$\varepsilon \rightarrow \varepsilon \mathcal{L}_{\varepsilon}^{-1}(V_{\varepsilon}):\Omega\rightarrow H^{\rho,m}$  is complex differentiable with derivatives in $H^{\rho,m-\tau}$ by Lemma~\ref{bootstrap} in Appendix~\ref{sec:appendix}.
\end{proof}

\subsection{Existence of the fixed point}
\label{sec:existencefp}
The proof of the existence of the solutions for equation \eqref{fin-fix} is based  on  the fixed point theorem in the Banach space $H^{\rho,m,\Omega}$.  We consider a ball $\mathcal{B}_{r}(0)$ around the origin in $H^{\rho,m,\Omega}$ with radius $r>0$ such that $\mathcal{T}(\mathcal{B}_{r}(0))\subset \mathcal{B}_{r}(0)$  and $\mathcal{T}$ is a contraction in the ball $\mathcal{B}_{r}(0)$.

By \eqref{inverse1}, we get
\begin{equation}\label{cons}
\|\varepsilon \mathcal{L}_{\varepsilon}^{-1}\|_{\rho,m,\Omega}\leq C_{\lambda,\mu}.
\end{equation}
Moreover, it follows from \eqref{nont0} ( $\hat{g}(0)=D\hat{g}(0)=0$)  and  Proposition~\ref{com-regu}
that the Lipschitz constant of the composition operator $\hat{g}\circ U$ is bounded by a constant times the radius $r$ when $U\in \mathcal{B}_{r}(0)$. Therefore, for $U\in \mathcal{B}_{r}(0)$, one has
\begin{equation*}
\begin{split}
\|\mathcal{T}(U)\|_{\rho, m,\Omega}
&\leq\|\mathcal{T}(0)\|_{\rho, m,\Omega}+ \|\mathcal{T}(U)-
\mathcal{T}(0)\|_{\rho, m,\Omega}\\ 
&\leq \|\varepsilon \mathcal{L}_{\varepsilon}^{-1}\|_{\rho,m,\Omega}\left(  \|f\|_{\rho, m,\Omega}+
\| \hat{g}(U)-\hat{g}(0)\|_{\rho, m,\Omega}\right) \\
&\leq C_{\lambda,\mu}\left( \|f\|_{\rho, m,\Omega}+Lip( \hat{g})\cdot \|U\|_{\rho, m,\Omega}\right)
\\
&\leq C_{\lambda,\mu}\left( \|f\|_{\rho, m,\Omega}+O(r)\cdot \|U\|_{\rho, m,\Omega}\right)
\leq r,
\end{split}
\end{equation*}
whenever we take $f$ and $r$ such that
\begin{equation}\label{small-r}
\|f\|_{\rho, m,\Omega}\leq \frac{r}{2C_{\lambda,\mu}},\, C_{\lambda,\mu}O(r)<\frac{1}{2}.
\end{equation}
Note that we need the smallness assumption for $f$ in this case. Thus, $\mathcal{T}(\mathcal{B}_{r}(0)) \subset\mathcal{B}_{r}(0)$. 

For any elements $U_1,\,U_2\in\mathcal{B}_{r}(0)$, we have that
\begin{equation*}
\begin{split}
\|\mathcal{T}(U_1)-\mathcal{T}(U_2)\|_{\rho, m,\Omega}
&=\|\varepsilon\mathcal{L}_{\varepsilon}^{-1} \hat{g}(U_1)-\varepsilon\mathcal{L}_{\varepsilon}^{-1} \hat{g}(U_2)\|_{\rho, m,\Omega}\\
&\leq C_{\lambda,\mu} O(r)\|U_1-U_2\|_{\rho, m,\Omega}\\
&\leq \frac{1}{2}\|U_1-U_2\|_{\rho, m,\Omega}.
\end{split}
\end{equation*} 

Therefore, $\mathcal{T}$ is a contraction in the ball $
\mathcal{B}_{r}(0)$ satisfying \eqref{small-r}. It follows from the fixed point theorem in the Banach space  $H^{\rho,m,\Omega}$ that
there exists a unique solution $U\in H^{\rho,m,\Omega}$ analytic in $\varepsilon$ for equation \eqref{fixeq}.  
\begin{remark}
	When we consider the operator $\mathcal{T}$ defined in \eqref{con-op} in the Banach space 
	$H^{\rho,m,\Omega}$, the solution $U_{\varepsilon}$ obtained
	via fixed point theorem does not lose any regularity on  $\varepsilon$. That is, the solution $U_{\varepsilon}$ naturally depends analytically on the parameter $\varepsilon$.  However, in the finitely differentiable case, when we take $\varepsilon\in \widetilde{\Omega}\subset \mathbb{R}$ instead of $\varepsilon\in\Omega \subset \mathbb{C}$, the contraction mapping principle is not enough to get a solution $U_{\varepsilon}$ with optimal regularity in $\varepsilon$ since when $\rho=0$, the space $H^{\rho,m,\widetilde{\Omega}}$ is no longer a Banach space  with supremum in $\varepsilon$. We will combine with the implicit function theorem to get the optimal regularity. (See Section~\ref{sec:proof-fi} for more details).  It is worth pointing out that in the low regularity, especially in $H^1$, we need more sophisticated contraction argument in some sense since there is no Lipschitz property for the composition operator $\hat{g}\circ u$ in $H^1$. (See Section~\ref{sec:lowregularity}).
\end{remark}
\begin{remark}\label{noo-de}
	We emphasize that  the general solution $U_{\varepsilon}$ obtained above maybe not differentiable in $\varepsilon$ at the origin $\varepsilon=0$ since we do not impose any Diophantine condition for the frequency $\omega$. Indeed, if $U_{\varepsilon}$ was differentiable, we denote the derivative $U^{(1)}(\theta):=\frac{dU_{\varepsilon}(\theta)}{d\varepsilon}\mid _{\varepsilon=0}$ and assume $U_{\varepsilon}=0$ at point $\varepsilon=0$. Then, taking the derivative in $\varepsilon$ at $\varepsilon=0$ for equation~\ref{fixeq}, $U^{(1)}$  would satisfy that 
	\begin{equation}\label{dio}
	\begin{split}
	\left( \omega\cdot \partial_{\theta}\right) U^{(1)}(\theta) 
	=f(\theta).
	\end{split}
	\end{equation}
	If $\omega$ is sufficiently Liouvillean ($e.g,  |\omega\cdot k|\leq \exp
	(-|k|^2)$, such $\omega$ can be easily constructed for infinitely many $k$), we can easily construct analytic function $f$ so that $U^{(1)}(\theta)$ solving \eqref{dio} cannot be even a distribution.
\end{remark}
\begin{lemma}\label{continuous}
For the solution $U_{\varepsilon}$ constructed above, we have that the mapping $\varepsilon\rightarrow U_{\varepsilon}$ is continuous when $\varepsilon \rightarrow 0$.
\end{lemma}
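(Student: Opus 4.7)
The plan is to use the fixed point equation \eqref{fin-fix} directly and control each piece separately. Writing
\[
U_{\varepsilon}=\varepsilon \mathcal{L}_{\varepsilon}^{-1} f \;-\; \varepsilon \mathcal{L}_{\varepsilon}^{-1}\hat{g}(U_{\varepsilon}),
\]
the continuity claim at $\varepsilon=0$ reduces to showing both summands vanish in $H^{\rho,m}$ as $\varepsilon\to 0$, given that $U_{\varepsilon}$ already lies in the small ball $\mathcal{B}_r(0)$ from the fixed point construction.

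First I would analyze the linear driving term Fourier mode by Fourier mode. Expanding $f=\sum_k \hat{f}_k e^{\mathrm{i} k\cdot\theta}$ and using $\hat{f}_0=0$ from hypothesis $\mathbf{H}$, we have
\[
\varepsilon\mathcal{L}_{\varepsilon}^{-1}f=\sum_{k\neq 0}\varepsilon L_{\varepsilon}^{-1}(k\cdot\omega)\,\hat{f}_k\, e^{\mathrm{i} k\cdot\theta}.
\]
For each fixed $k\neq 0$ the scalar $l_{\varepsilon,j}(k\cdot\omega)=-\varepsilon(k\cdot\omega)^2+\mathrm{i}(k\cdot\omega)+\varepsilon\lambda_j$ in \eqref{l-value} converges to $\mathrm{i}(k\cdot\omega)\neq 0$ as $\varepsilon\to 0$, so all the entries of the Jordan-block matrix $\varepsilon L_{\varepsilon}^{-1}(k\cdot\omega)$, which by \eqref{invert} are of the form $(-1)^p \varepsilon^{p+1}l_{\varepsilon,j}^{-(p+1)}(k\cdot\omega)$, tend to $0$ pointwise.

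Next I would upgrade this pointwise-in-$k$ decay to norm decay by dominated convergence on the Fourier side. The uniform bound \eqref{inverse1} gives $|\varepsilon L_{\varepsilon}^{-1}(k\cdot\omega)|\leq C_{\lambda,\mu}$ independently of $\varepsilon\in\Omega(\sigma,\mu)$ and of $k$, so each summand of $\|\varepsilon\mathcal{L}_{\varepsilon}^{-1}f\|_{\rho,m}^{2}=\sum_{k\neq 0}|\varepsilon L_{\varepsilon}^{-1}(k\cdot\omega)\hat{f}_k|^{2} e^{2\rho|k|}(|k|^{2}+1)^{m}$ is dominated by $C_{\lambda,\mu}^{2}|\hat{f}_k|^{2} e^{2\rho|k|}(|k|^{2}+1)^{m}$, which is summable because $f\in H^{\rho,m}$. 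Dominated convergence then yields $\|\varepsilon\mathcal{L}_{\varepsilon}^{-1}f\|_{\rho,m}\to 0$ as $\varepsilon\to 0$.

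Finally, since $\hat{g}(0)=D\hat{g}(0)=0$ by \eqref{nont0}, Lemma~\ref{gag-nir} combined with the Banach algebra property of Lemma~\ref{alge} yields $\|\hat{g}(U_{\varepsilon})\|_{\rho,m}\leq C\|U_{\varepsilon}\|_{\rho,m}^{2}$ throughout $\mathcal{B}_r(0)$. Together with \eqref{cons} this gives
\[
\|U_{\varepsilon}\|_{\rho,m}\leq \|\varepsilon\mathcal{L}_{\varepsilon}^{-1}f\|_{\rho,m}+C_{\lambda,\mu}C\,\|U_{\varepsilon}\|_{\rho,m}^{2},
\]
and the smallness of $r$ built into \eqref{small-r} lets us absorb the quadratic term on the right, producing $\|U_{\varepsilon}\|_{\rho,m}\leq 2\|\varepsilon\mathcal{L}_{\varepsilon}^{-1}f\|_{\rho,m}\to 0$. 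The main obstacle is really the dominated-convergence step: it depends crucially on the $\varepsilon$-independent majorant $C_{\lambda,\mu}$ of Proposition~\ref{control1}, whose proof in turn exploits that $\Omega(\sigma,\mu)$ is separated from the imaginary axis; without that conical geometry, the $\sigma^{-1}$ factors appearing in the estimates of $L_{\varepsilon}^{-1}$ would not be absorbable by the prefactor $\varepsilon$, and the argument would break down.
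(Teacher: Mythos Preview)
Your argument is correct and is genuinely different from the paper's. The paper proceeds by a compactness/closed-graph argument: it picks an auxiliary $\rho_1>\rho$, observes that by uniqueness the solution built in $H^{\rho_1,m}$ coincides with the one in $H^{\rho,m}$, uses that bounded sets of $H^{\rho_1,m}$ are precompact in $H^{\rho,m}$, and then checks that the graph $\{(\varepsilon,U_\varepsilon)\}$ is closed by passing to the limit in \eqref{fixeq2}. Your approach, by contrast, is quantitative and self-contained: you feed the $\varepsilon$-uniform multiplier bound \eqref{inverse1} into a dominated-convergence argument on the Fourier side to kill $\varepsilon\mathcal{L}_\varepsilon^{-1}f$, and then absorb the nonlinear remainder via the quadratic vanishing of $\hat g$ and the smallness \eqref{small-r}. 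This yields the explicit estimate $\|U_\varepsilon\|_{\rho,m}\le 2\|\varepsilon\mathcal{L}_\varepsilon^{-1}f\|_{\rho,m}$, which the paper's soft argument does not provide, and it avoids the need for any extra analyticity width $\rho_1>\rho$ on $f$. The paper's route, on the other hand, is more robust in that it does not rely on the specific quadratic structure $\hat g(0)=D\hat g(0)=0$ and would adapt more readily to situations where only a Lipschitz bound on $\hat g$ is available.
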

\begin{proof}
We take $\rho_1>\rho>0$ so that both the space  $H^{\rho_1,m}$ and the space $H^{\rho,m}$ satisfy the assumptions of the Theorem~\ref{mainthm}. Denote by $U^1_{\varepsilon},\,U_{\varepsilon}$ the solutions obtained by applying Theorem~\ref{mainthm} to $H^{\rho_1,m}$, $H^{\rho,m}$ respectively. Then, we observe that $U^1_{\varepsilon}=U_{\varepsilon}$ by  $U^1_{\varepsilon}\in  H^{\rho_1,m}\subseteq H^{\rho,m}$
and the uniqueness conclusion in  $H^{\rho,m}$. 
Moreover, we note that the set $\left\lbrace U^1_{\varepsilon}\,|\,\varepsilon\in \overline{\Omega} \right\rbrace $, where $\overline{\Omega}$ denotes the closure of $\Omega$, is bounded in $H^{\rho_1,m}$ and hence it is precompact in $H^{\rho,m}$ topology.

To show that $U_{\varepsilon}$ is continuous in $\varepsilon$ at $\varepsilon=0$, it suffices to verify that the graph $\mathcal{G}$ of $U$. That is,
\begin{equation*}
\mathcal{G}:=\left\lbrace (\varepsilon,\,U_{\varepsilon})|\, \varepsilon\in \overline{\Omega} \right\rbrace 
\end{equation*}
is compact in the $H^{\rho,m}$ topology. Since a ball in $H^{\rho_1,m}$ is precompact in $H^{\rho,m}$, 
we just need to prove that $\mathcal{G}$ is closed. Indeed, the sequence $(\varepsilon_n,\,U_{\varepsilon_n})\in \mathcal{G}$ if and only if  \eqref{fixeq2} is satisfied, that is 
\begin{equation*}
\mathcal{L}_{\varepsilon_n}(U_{\varepsilon_n}(\theta)) =\varepsilon_n f(\theta)-\varepsilon_n\hat{g}(U_{\varepsilon_n}(\theta)).
\end{equation*}
Taking the limits of $\varepsilon_n\rightarrow \varepsilon_{*},\,U_{\varepsilon_n}\rightarrow U^{*}$ for $n\rightarrow \infty$, one can  obtain that
\begin{equation*}
\mathcal{L}_{\varepsilon_*}(U^{*}(\theta)) =\varepsilon_* f(\theta)-\varepsilon_*\hat{g}(U^{*}(\theta)).
\end{equation*}
Hence, we conclude that $(\varepsilon_*,\,U^{*})\in  \mathcal{G}$.
\end{proof}

\section{Finitely differentiable case: Proofs of Theorem~\ref{theom-fi} and Theorem~\ref{theom-conti}}
\label{sec:finitely} 
In this section we present the proof of 
Theorem~\ref{theom-fi}, which concerns
the highly differentiable forcing $f$. We also prove 
Theorem~\ref{theom-conti} 
in which the forcing is assumed to be 
$L^2$ or $H^1$. The method used for the finitely differentiable case, especially $H^1$, is  different  from that for the analytic case.
\subsection{Proof of Theorem~\ref{theom-fi}}\label{sec:proof-fi}
When the forcing term $f$ and the nonlinear term $g$  are finitely differentiable, we consider $\varepsilon \in \mathbb{R}$ in equation \eqref{000.0}. 

\subsubsection{Regularity in $\varepsilon$}\label{sec:finisolution}
In order to get solutions $U_{\varepsilon}$ with some regularity in $\varepsilon$, we need to consider the operator $\mathcal{T}$ defined in \eqref{fixeq3} acting on the space $H^{m,\widetilde{\Omega}}$ 
consisting of  differentiable functions of 
$\varepsilon$ taking values in $H^{m}$ with $\varepsilon$ ranging over the domain  $\widetilde{\Omega}$ defined in \eqref{fini-para}.  Moreover, we endow $H^{m,\widetilde{\Omega}}$ with the
supremum norm
\begin{equation}\label{suprenorm2}
\|U\|_{H^{m,\widetilde{\Omega}}}=\sup_{\varepsilon \in \widetilde{\Omega}}\|U_{\varepsilon}\|_{H^{m}},
\end{equation}
which is similar to the analytic case in Section~\ref{sec:analyticitysolution}. Note that $H^m$ is a Banach space and it is a Banach algebra when
$m>\frac{d}{2}$ by Proposition~\ref{alge}. However,
$H^{m,\widetilde{\Omega}}$ (in contrast with the analytic version $H^{\rho,m,\widetilde{\Omega}}$) is
not a Banach space with the supremum norm defined in \eqref{suprenorm2}.  In this case, if we just apply the
 fixed point theorem to the proof of Theorem~\ref{theom-fi} in the space  $H^{m,\widetilde{\Omega}}$, we may lose some regularity in the argument $\varepsilon$. To avoid this shortcoming, we  
will combine the contraction argument with the implicit function theorem such that the solution $U_{\varepsilon}$ with optimal regularity in $\varepsilon$ can be obtained. 

More precisely, as shown in Section~\ref{sec:fini} at  $\mathbf{Step~1}$,  for some $\varepsilon_0 \in \widetilde{\Omega}$, we first
 produce a solution 
$U_{\varepsilon_0}$ of equation \eqref{fixeq3} such that $\mathbf{T}(\varepsilon_0,U_{\varepsilon_0})=0$, where $\mathbf{T}$ is defined in \eqref{impli}.
 To get the optimal regularity of the map taking  $\widetilde{\Omega}$ to $H^m$,  we  apply the classic implicit function theorem (we refer to the references \cite{dieu, loo, Krantz}) for the operator  $\mathbf{T}$. In this process, it is crucial to
 study the differentiability
of the operator $\mathbf{T}$, 
 mapping
$\widetilde{\Omega}\times H^m
$ to $H^{m}$, with respect to the arguments $(\varepsilon,U)$
as well as the invertibility of 
$D_2\mathbf{T}(\varepsilon_0, U_{\varepsilon_0})$.

By equation \eqref{fixeq3}, we can easily get the differentiability of the operator $\mathcal{T}$ with respect to the argument $U\in H^m$  since the operator $\mathcal{L}_\varepsilon$ are linear 
and the differentiability
properties of the left composition operator  $\hat g \circ U$ are already studied carefully in \cite{kappe03,AZ90}. 

The key to our results will be the differentiability of the operator $\mathcal{T}$ in \eqref{fixeq3} with respect to $\varepsilon$ as the following:
\begin{proposition}\label{derivatives}
	Fix any $m \in \mathbb{N}$ with $m>\frac{d}{2}$ and $\sigma > 0$. 
	We consider the map that
	$\varepsilon \mathcal{L}^{-1}_\varepsilon \in B(H^m, H^m)$ for every $\varepsilon \in \widetilde{\Omega}$, where $B(H^m, H^m)$ denotes the set of bounded operators from the space $H^m$ to itself.
	
	For any $l \in \mathbb{N}$, the map  $\varepsilon \rightarrow 
	\varepsilon \mathcal{L}^{-1}_\varepsilon$ is $C^l$ considered as
	a mapping from $\widetilde{\Omega}$ to $B(H^m, H^m)$.  
	Moreover, for any $l\in \mathbb{N}$ and $\varepsilon \in \widetilde{\Omega}$,
	$\frac{d^l}{d \varepsilon^l} (\varepsilon \mathcal{L}_\varepsilon^{-1})\in B(H^m, H^m)$.
	
	As a matter of fact, something stronger is true. The map 
	$\varepsilon \rightarrow \varepsilon  \mathcal{L}^{-1}_\varepsilon$ is 
	real analytic for $\varepsilon \in \widetilde{\Omega}$ and the radius of
	analyticity can be bounded uniformly for all $\varepsilon \in \widetilde{\Omega}$. 
\end{proposition}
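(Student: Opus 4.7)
The plan is to reduce everything to a uniform estimate for the symbol of the Fourier multiplier $\varepsilon \mathcal{L}_\varepsilon^{-1}$ on a complex neighborhood of $\widetilde{\Omega}$. Recall from Section~\ref{sec:multiplier} that, when expressed in the Jordan basis, this operator acts as multiplication by the block matrix $\varepsilon L_\varepsilon^{-1}(\omega\cdot k)$, and since the $H^m$ norm is computed from Fourier coefficients, the operator norm of $\varepsilon \mathcal{L}_\varepsilon^{-1}$ on $H^m$ is bounded by $\sup_{a\in\mathbb{R}}|\varepsilon L_\varepsilon^{-1}(a)|$. It therefore suffices to prove that this symbol extends to a uniformly bounded holomorphic matrix-valued function of $\varepsilon$ on a complex strip of width $r>0$ around $\widetilde{\Omega}$, with $r$ independent of the base point; bounded operator analyticity and the derivative bounds will then follow by the operator-valued Cauchy integral formula.

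The heart of the argument is a uniform lower bound for $|l_\varepsilon(a)|$ on this strip. For any $\varepsilon_0 \in \widetilde{\Omega}$, I would choose a disk $\overline{D}(\varepsilon_0,r)$ small enough to lie inside a conical domain of the form \eqref{domain0} (reflecting through the origin if $\varepsilon_0<0$), so that $\mathrm{Re}(\varepsilon)\geq\mu|\mathrm{Im}(\varepsilon)|$ for a large $\mu$. The case analysis in Proposition~\ref{control1}, splitting $a\in\mathbb{R}$ into the set $I_1$ near the roots $a=\pm\sqrt{\lambda_j}$ (for $\lambda_j>0$) and its complement $I_2$, supplemented by the easy bound \eqref{negative} when $\lambda_j<0$, then yields $|l_\varepsilon(a)|\geq c(\sigma,\mu,\lambda)>0$ uniformly in $a$ and in $\varepsilon\in\overline{D}(\varepsilon_0,r)$. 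Because this argument only uses $\sigma\leq|\varepsilon|\leq 2\sigma$ together with the conical shape, the radius $r$ can be chosen independent of $\varepsilon_0\in\widetilde{\Omega}$.

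Once the lower bound is secured, the rest falls into place. The explicit formulas \eqref{invert0}--\eqref{invert} present $\varepsilon L_\varepsilon^{-1}(a)$ as a polynomial expression in $\varepsilon$ and $l_\varepsilon^{-1}(a)$, with powers of $\varepsilon$ absorbing the dangerous factors $\sigma^{-1}$ as in \eqref{inverse1}, hence it is holomorphic in $\varepsilon$ on each disk and uniformly bounded in $a\in\mathbb{R}$. The Fourier multiplier representation then promotes this to a holomorphic map $\varepsilon\mapsto\varepsilon\mathcal{L}_\varepsilon^{-1}\in B(H^m,H^m)$ on the complex strip; in particular, the restriction to $\widetilde{\Omega}$ is real-analytic with a uniform radius of analyticity. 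Cauchy's integral formula applied to this operator-valued holomorphic function then bounds $\bigl\|\tfrac{d^l}{d\varepsilon^l}(\varepsilon\mathcal{L}_\varepsilon^{-1})\bigr\|_{B(H^m,H^m)}\leq l!\,M/r^l$, yielding both the $C^l$ claim and the membership of each derivative in $B(H^m,H^m)$.

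The main obstacle lies in securing the uniform complex strip of holomorphy. Case~2 of Section~\ref{sec:coe} already shows that for purely imaginary $\varepsilon$ the quadratic $l_\varepsilon(\cdot)$ acquires real roots, so one must rule out the possibility that, as $\varepsilon$ is perturbed off the real axis in $\widetilde{\Omega}$, those roots migrate onto the $a$-axis and destroy the multiplier bound. This is precisely what the conical shape of $\Omega(\sigma,\mu)$ guarantees: the dominance of $\mathrm{Re}(\varepsilon)$ over $|\mathrm{Im}(\varepsilon)|$ keeps the discriminant analysis of Proposition~\ref{control1} operative, particularly in the critical region $I_1$ where both the real and imaginary parts of $l_\varepsilon$ can simultaneously be small. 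Carrying out this verification quantitatively, and observing that the bounds depend only on $\sigma,\mu,\lambda$ and not on the particular $\varepsilon_0\in\widetilde{\Omega}$, is the technical core of the argument.
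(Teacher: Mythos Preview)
Your approach is correct but differs from the paper's in an instructive way. The paper never leaves the real base point: it factors
\[
l^{-1}_{\varepsilon+\delta}(a)=l^{-1}_{\varepsilon}(a)\left(1+\delta\,\frac{\lambda_j-a^2}{\varepsilon(\lambda_j-a^2)+\mathrm{i}a}\right)^{-1}
\]
and observes directly that the quotient $\frac{\lambda_j-a^2}{l_\varepsilon(a)}$ is continuous in $a$ with a finite limit as $|a|\to\infty$, hence uniformly bounded in $a\in\mathbb{R}$ and in $\varepsilon\in\widetilde{\Omega}$. This yields a geometric series in $\delta$ with a uniform radius, and the only lower bound on $|l_\varepsilon|$ ever used is the elementary real-$\varepsilon$ estimate \eqref{lowbound} from Case~1. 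Your route instead embeds a disk $D(\varepsilon_0,r)$ into a conical region $\Omega(\sigma',\mu)$ and invokes the full complex case analysis of Proposition~\ref{control1}, then appeals to the operator-valued Cauchy formula. Both yield the same conclusion; the paper's factorization is slightly more economical because it avoids re-running the $I_1/I_2$ dichotomy, while your argument has the advantage of recycling estimates already proved and making the uniform radius of analyticity explicit via Cauchy bounds. One small point to tidy in your write-up: the disk around an endpoint such as $\varepsilon_0=\sigma$ does not sit inside $\Omega(\sigma,\mu)$ itself, so you should say explicitly that you apply Proposition~\ref{control1} with a slightly smaller inner radius $\sigma'$ (e.g.\ $\sigma'=\sigma/2$) to accommodate the complex perturbation.
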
 

\begin{proof}  
	The key to the proof  is the observation that, as noted in 
	\eqref{lowbound} in Section~\ref{sec:coe}, $|l_\varepsilon(a)| \ge |\varepsilon| |\lambda_j|\ge \sigma |\lambda_j| $  for $\varepsilon\in \widetilde{\Omega}$.
	
	To study the expansion in powers of  $\delta$  for 
	$l^{-1}_{\varepsilon + \delta}(a) $, we rewrite:
	\begin{equation}\label{goodformu}
	\begin{split} 
	l^{-1}_{\varepsilon + \delta}(a)  &= 
	\left( (\varepsilon +\delta)( \lambda_j -a^2) + \mathrm{i} a  \right)^{-1} \\
	&=\left(\varepsilon ( \lambda_j -a^2) + \mathrm{i} a +
	\delta( \lambda_j -a^2)  \right)^{-1}
	 \\
	&= \left(\varepsilon ( \lambda_j -a^2) + \mathrm{i} a  \right)^{-1} 
	\left( 1 + \delta \frac{ \lambda_j -a^2}{\varepsilon ( \lambda_j -a^2) + \mathrm{i} a}  \right)^{-1}. 
	\end{split} 
	\end{equation}
	It is easy to see that the factor 
	$ \frac{ \lambda_j -a^2}{\varepsilon (
		\lambda_j -a^2) + \mathrm{i} a}$
	is bounded uniformly in $a$ (compute the limit as $|a|$ tends 
	to infinity and observe that the function is continuous in $a$ since 
	the denominator does not vanish)  and uniformly in
	$\varepsilon$ when $\varepsilon$ ranges in an interval bounded away
	from zero.
	
	Therefore, we can expand $\left( 1 + \delta \frac{ \lambda_j -a^2}{\varepsilon ( \lambda_j -a^2) + \mathrm{i} a}  \right)^{-1} $ in \eqref{goodformu} in powers of $\delta$ using  the geometric 
	series formula. Moreover, the radii of convergence are bounded uniformly in $\varepsilon\in \widetilde{\Omega}$ and 
	the values of the coefficients in the expansion are also bounded  uniformly in $a \in \mathbb{R}, \varepsilon \in \widetilde{\Omega}$. 
	
	Using the formula~\eqref{invert} in Section~\ref{sec:multiplier} for the inverse $\mathcal{L}^{-1}_{\varepsilon}$, we also obtain 
	that the matrices $L_{\varepsilon + \delta}^{-1}$ can be  expanded 
	in powers of $\delta$ with coefficients that are  bounded uniformly in $a \in \mathbb{R}, \varepsilon \in \widetilde{\Omega}$.

	We note that the operator $\mathcal{L}^{-1}_{\varepsilon}$
	are multiplier operators (in the sense used in Fourier series). That is, for $\widehat{f}_{k}$ being the Fourier coefficients of function $f$ in the space $H^{m}$, the Fourier coefficients $\widehat{(\mathcal{L}^{-1}_{\varepsilon}f)}_{k}$ of function $(\mathcal{L}^{-1}_{\varepsilon}f)$ in the space $H^{m}$ have the structure:
	\begin{equation}\label{fou-coe}
	\widehat{(\mathcal{L}^{-1}_{\varepsilon}f)}_{k}=L^{-1}_{\varepsilon,k}\widehat{f}_{k},
	\end{equation}
	where each $L^{-1}_{\varepsilon,k}$ is  $n\times n$ matrix (see \eqref{invert} for details). From the discussion in above paragraph, we know that, for each $k$, $L^{-1}_{\varepsilon,k}$ is uniformly analytic in $\varepsilon$.  Thus, we conclude that the operator $\mathcal{L}^{-1}_{\varepsilon}$ is analytic in $\varepsilon$  by \eqref{fou-coe}.

	In addition, we know that the Fourier indices $k$ only 
	enter into the  multipliers $L_{\varepsilon,k}^{-1}$ through 
	$\omega \cdot k$ and  the supremum of $L_{\varepsilon,k}^{-1}$ over the Fourier index is bounded
	by the supremum in $a$, which is studied in the previous Section~\ref{sec:linverse}. 
	Together with the fact that the norms of functions in Sobolev spaces are measured by size of the Fourier coefficients, we have that, for  all $m>\frac{d}{2}$, the norm of $\mathcal{L}^{-1}_{\varepsilon}$ considered as an operator from the Sobolev space $H^m$ to itself is defined by
	\begin{equation}\label{sup}
	\left\|\mathcal{L}^{-1}_{\varepsilon} \right\|_{H^m\rightarrow H^m}=\sup_{k\in \mathbb{Z}^d}\|L^{-1}_{\varepsilon,k}\|.
	\end{equation}
	Note that the norms of $L^{-1}_{\varepsilon,k}$ are just finite-dimensional norms. 
	 	As a consequence,
	we  can bound $\| \mathcal{L}_{\varepsilon }^{-1} \|_{H^m\rightarrow H^m}$
	by the supremum of the multipliers defined in \eqref{sup}. Therefore, when we write $\mathcal{L}^{-1}_{\varepsilon+\delta}=\sum_{n=0}^{\infty}\mathcal{L}^{-1}_{\varepsilon,n}\delta^n$, $\| \mathcal{L}_{\varepsilon,n}^{-1} \|_{H^m\rightarrow H^m}$ can be bounded  by the way of \eqref{sup}. That means $\frac{d^l}{d \varepsilon^l} (\varepsilon \mathcal{L}_\varepsilon^{-1})\in B(H^m, H^m)$ for every $\varepsilon\in \widetilde{\Omega}$.	
\end{proof} 

\subsubsection{Existence of the solutions}\label{sec:fini}
With all the above preliminaries established, now
we turn to finishing the proof of Theorem~\ref{theom-fi}. We divide the proof into two steps. First, for a fixed $\varepsilon\in \widetilde{\Omega}$, we find a fixed point $U_{\varepsilon}$ of $\mathcal{T}$ defined in \eqref{fixeq3}  by considering a domain $\mathcal{P} \subset H^{m}$ with $\mathcal{T}(\mathcal{P})\subset \mathcal{P} $ on which $\mathcal{T}$ is a contraction.  Secondly, we use  the classical implicit function theorem to verify that the solution $U_{\varepsilon}$ we obtained in the first step possesses the optimal regularity in $\varepsilon$.  Namely, we conclude that $U_{\varepsilon} \in H^{m,\widetilde{\Omega}}$.
	\medskip
	
$\mathbf{Step~1}$.  As we state in Section~\ref{sec:small}, there are two ways to prove that $\mathcal{T}$ is a  contraction. One is that we choose a small ball in $H^{m}$  such  that $Lip(\hat{g})$ is small in this ball. Meanwhile, we impose smallness condition on $f$ in this ball. In this way, the operator $\mathcal{T}$ maps this ball into itself and it is a contraction in this ball. (We omit the details here since it is similar to Section~\ref{sec:existencefp}). Another is that we assume that $\emph{Lip}(\hat{g})$ (or $D\hat{g}$) is globally small in the whole of $\mathbb{R}^n$. In this case, for a fixed $\varepsilon\in \widetilde{\Omega}$ and  $U_1,\,U_2\in H^{m}$, it follows from \eqref{cons} that
\begin{equation*}
\begin{split}
\|\mathcal{T}(U_1)-\mathcal{T}(U_2)\|_{H^{m}}
&= \|\varepsilon\mathcal{L}_{\varepsilon}^{-1}(\hat{g}(U_1)-\hat{g}(U_2))\|_{H^{m}}\\
&\leq C_{\lambda,\mu}
Lip(\hat{g})\cdot \|U_1-U_2\|_{H^{m}}\\
&\leq \frac{1}{2}\|U_1-U_2\|_{H^{m}}.
\end{split}
\end{equation*}
This makes $\mathcal{T}$ a contraction in the whole space $H^{m}$. 

In summary, we get a fixed point $U_{\varepsilon_0}\in H^m$ of  the equation \eqref{con-op} for some $\varepsilon_0\in \widetilde{\Omega}$.
\medskip

 $\mathbf{Step ~2}$.
 It follows from  Proposition~\ref{com-regu} and Proposition~\ref{derivatives} that the operator  $\mathcal{T}$ is $C^l$ with respect to the argument $(\varepsilon,U)$. Namely,
$\mathbf{T}(\varepsilon,U):=U-\mathcal{T}(\varepsilon,U)$ is $C^l$
in the domain of $\widetilde{\Omega}\times H^{m}$. Based on $\mathbf{Step ~1}$, we have
$\mathbf{T}(\varepsilon_{0}, U_{\varepsilon_0})=0$. Moreover, $D_2\mathbf{T}(\varepsilon_{0},U_{\varepsilon_0})=Id-D_2\mathcal{T}(\varepsilon_{0},U_{\varepsilon_0})=Id-\varepsilon_0\mathcal{L}_{\varepsilon_0}^{-1}D\hat{g}(U_{\varepsilon_0})$ is invertible since $\varepsilon_0\mathcal{L}_{\varepsilon_0}^{-1}$ is bounded and $D\hat{g}(U_{\varepsilon_0})$ is sufficiently small.
Therefore, by the  implicit function theorem, there exist an open neighborhood included in $\widetilde{\Omega}\times H^{m}$ of $(U_{\varepsilon_0},\varepsilon_0)$ and 
a $C^l$ function $U_{\varepsilon}$ 
satisfying $\mathbf{T}(\varepsilon,U_{\varepsilon})=0$ on this neighborhood.

\subsection{Proof of Theorem~\ref{theom-conti}}
\label{sec:lowregularity}

In this section, we will prove 
Theorem~\ref{theom-conti} in a different way from the first two cases (analytic and highly differentiable cases). 
 The key problem is the properties of the composition operator  $\hat{g}\circ u$
in space  $H^{1}(\mathbb{T}^d)$ or space $L^2(\mathbb{T})$.

\begin{proposition}\label{continuouscomposition}
	For the composition operator 
	defined by:
	\begin{equation}\label{compositiondefined}
	\mathcal{C}_{\hat g}[u](\theta)={\hat g}(u(\theta)),
	\end{equation}
we have the following properties:
	 
If we consider $\mathcal{C}_{\hat{g}}$ acting on $L^2(\mathbb{T}^d,\mathbb{R}^n)$ and assume that $\hat{g}$ is globally Lipschitz continuous on $\mathbb{R}^n$, then
\begin{equation*}
\mathcal{C}_{\hat{g}}:\,L^{2}(\mathbb{T}^d,\mathbb{R}^n)\rightarrow L^{2}(\mathbb{T}^d,\mathbb{R}^n)
\end{equation*}
is Lipschitz continuous.

If we consider $\mathcal{C}_{\hat g}$ acting 
on $H^1(\mathbb{T}^d, \mathbb{R}^n)$ and assume that $\hat{g}\in C^{1+Lip}$, then 
\begin{equation*}
\mathcal{C}_{\hat g}:\,H^{1}(\mathbb{T}^d,\mathbb{R}^n)\rightarrow H^{1}(\mathbb{T}^d,\mathbb{R}^n)
\end{equation*} 
is bounded and  continuous. In particular, given $\epsilon >  0$, 
there is 
$\delta:=\delta(\epsilon, \text{Lip}(\hat g), \hat g(0) ) > 0$ so
that $\|u\|_{H^1} \le \delta $ implies 
$\| \mathcal{C}_{\hat g}(u) \|_{H^1} \le \epsilon$.
\end{proposition}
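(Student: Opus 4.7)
The plan is to treat the two regularity regimes separately, and to reduce everything to a pointwise bound coming from the hypothesis on $\hat g$.

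For the $L^2$ case, the argument is direct. Since $\hat g$ is globally Lipschitz on $\mathbb{R}^n$, we have the pointwise bound $|\hat g(u(\theta)) - \hat g(v(\theta))| \le \mathrm{Lip}(\hat g)\, |u(\theta)-v(\theta)|$ for every $\theta\in\mathbb{T}^d$; integrating its square and using that $|\hat g(0)|\cdot|\mathbb{T}^d|^{1/2}$ is finite yields both that $\mathcal{C}_{\hat g}$ maps $L^2$ to $L^2$ and the Lipschitz estimate $\|\mathcal{C}_{\hat g}(u)-\mathcal{C}_{\hat g}(v)\|_{L^2}\le \mathrm{Lip}(\hat g)\,\|u-v\|_{L^2}$, with dependence on $\hat g(0)$ only in the bound $\|\mathcal{C}_{\hat g}(u)\|_{L^2}\le |\hat g(0)|\,|\mathbb{T}^d|^{1/2}+\mathrm{Lip}(\hat g)\|u\|_{L^2}$.

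For the $H^1$ case, the key observation is that $\hat g\in C^{1+\mathrm{Lip}}$ implies $D\hat g$ is continuous with $\|D\hat g\|_{L^{\infty}(\mathbb{R}^n)}\le \mathrm{Lip}(\hat g)$, because a locally Lipschitz function attains its Lipschitz constant as the essential supremum of its derivative. Thus for $u\in H^1$ the classical chain rule for Lipschitz compositions (see, e.g., the standard reference cited in the paper) gives $\nabla(\hat g\circ u)=D\hat g(u)\,\nabla u$ a.e., and
\begin{equation*}
\|\nabla(\hat g\circ u)\|_{L^2}\le \mathrm{Lip}(\hat g)\,\|\nabla u\|_{L^2}.
\end{equation*}
Combining with the $L^2$ bound from the previous paragraph gives $\|\mathcal{C}_{\hat g}(u)\|_{H^1}\le |\hat g(0)|\,|\mathbb{T}^d|^{1/2}+\mathrm{Lip}(\hat g)\,\|u\|_{H^1}$, from which the quantitative ``smallness implies smallness'' assertion follows with $\delta=(\epsilon-|\hat g(0)|\,|\mathbb{T}^d|^{1/2})/\mathrm{Lip}(\hat g)$, valid for $\epsilon$ past the unavoidable offset coming from $\hat g(0)$.

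The continuity in $H^1$ is the only step requiring real work. Given $u_n\to u$ in $H^1$, I decompose
\begin{equation*}
\nabla(\hat g(u_n))-\nabla(\hat g(u))=D\hat g(u_n)\,(\nabla u_n-\nabla u)+\bigl(D\hat g(u_n)-D\hat g(u)\bigr)\,\nabla u.
\end{equation*}
The first term is bounded in $L^2$ by $\mathrm{Lip}(\hat g)\,\|\nabla u_n-\nabla u\|_{L^2}\to 0$ immediately. The second term is the main obstacle: $\nabla u$ lives in $L^2$ and $D\hat g(u_n)-D\hat g(u)$ lives only in $L^{\infty}$, so direct estimation fails. To handle it I pass to a subsequence for which $u_n\to u$ almost everywhere, use continuity of $D\hat g$ to deduce $D\hat g(u_n)\to D\hat g(u)$ a.e., and then apply dominated convergence using the uniform majorant $|D\hat g(u_n)-D\hat g(u)|^2|\nabla u|^2 \le (2\,\mathrm{Lip}(\hat g))^2|\nabla u|^2 \in L^1$; a standard Urysohn subsequence argument upgrades this to convergence of the full sequence. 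Combining the two terms gives continuity of $\nabla\circ\mathcal{C}_{\hat g}$ in $H^1$, and together with the already established $L^2$ continuity this completes the proof.
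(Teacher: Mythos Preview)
Your $L^2$ argument is identical to the paper's. For the $H^1$ part, the paper gives no argument at all and simply refers to \cite{AZ90,Kinder00}; your self-contained proof via the chain rule $\nabla(\hat g\circ u)=D\hat g(u)\nabla u$, the pointwise bound $\|D\hat g\|_{L^\infty}\le \mathrm{Lip}(\hat g)$, and the dominated-convergence-plus-subsequence treatment of continuity is correct and is exactly the standard route taken in those references. So you have supplied what the paper outsources.

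Two small remarks. First, your observation about the ``unavoidable offset'' from $\hat g(0)$ is accurate: as literally stated, the smallness clause in the proposition cannot hold for $\epsilon$ below $|\hat g(0)|\,|\mathbb{T}^d|^{1/2}$; in the paper's applications one has $\hat g(0)=0$ from hypothesis~$\mathbf{H}$, and the dependence of $\delta$ on $\hat g(0)$ in the statement is presumably a vestige of that. Second, note that $\hat g\in C^{1+\mathrm{Lip}}$ alone does not force $D\hat g$ to be globally bounded; you are implicitly also using that $\hat g$ is globally Lipschitz, which is legitimate here since the statement invokes $\mathrm{Lip}(\hat g)$ and the ambient hypothesis~$\mathbf{\widetilde H}$ assumes it, but it is worth saying explicitly.
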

\begin{proof}
Since $\hat{g}$ is globally Lipschitz continuous on $\mathbb{R}^n$, denote $M=\text{Lip}(\hat g)$ (for ease of notation, we will use $M$ in the following part) and for $u,v\in L^{2}(\mathbb{T}^d,\mathbb{R}^n)$, we get 
\begin{equation*}
|\hat{g}( u(\theta))-\hat{g}(v(\theta))|\leq M |u(\theta)-v(\theta)|.
\end{equation*}
Therefore, 
\begin{equation*}
\|\hat{g}\circ u-\hat{g}\circ v\|_{L^2}\leq M\|u-v\|_{L^2}.
\end{equation*}
We refer to 
\cite{AZ90,Kinder00} for the properties of the operator $\mathcal{C}_{\hat g}$ mapping space $H^1(\mathbb{T}^d,\mathbb{R}^n)$ to itself .
\end{proof}
\begin{remark}\label{small-f}
We emphasize that for our results in $L^2$ and $H^{m}\,\,(m>\frac{d}{2})$ , it is needed to assume that $M=\text{Lip}(\hat g)$ is globally arbitrary small. This allows us to obtain that the operator $\mathcal{T}$ in \eqref{fixeq3} is a contraction in the whole space. 

However, due to the lack of Lipschitz regularity for the operator $\mathcal{C}_{\hat g}$ acting on the space $H^1$ (see \cite{AZ90}), we need to choose a ball belonging to $H^1$ so that the operator $\mathcal{T}$ 
maps this ball into itself. Note that the chosen ball does not need to be small. We also do not require that the forcing is small in $H^1$. 
\end{remark}

Now, we go back to the proof of  Theorem~\ref{theom-conti}.
\begin{proof}

First we give the proof for the result in space $L^2$. By Parseval's identity, we know that the $L^2-$norm is also expressible in terms of the Fourier coefficients. Together with the bound of $\varepsilon\mathcal{L}_{\varepsilon}^{-1}$ in \eqref{cons}, we have that $\mathcal{T}(L^2)\subset L^2$. Also, for $u,v\in L^2$, one has
\begin{equation*}
\|\mathcal{T}(u)-\mathcal{T}(v)\|_{L^2}=\|\varepsilon\mathcal{L}_{\varepsilon}^{-1}\big(\hat{g}\circ u-\hat{g}\circ v\big)\|_{L^2}\leq C_{\lambda,\mu}M\|u-v\|_{L^2}.
\end{equation*}
It follows from $M:=\text{Lip}(\hat g)\ll 1$ in assumption $\mathbf{\widetilde{H}}$ that $\mathcal{T}$ is a contraction in $L^2$. This gives the $L^2$ result.
\medskip

Now, we present the proof for the result in $H^1$. 
Using the interpolation inequality in Lemma~\ref{interpolation}, we obtain, for $n\leq1$, that
\begin{equation}\label{sconvergence} 
\begin{split}
\|\mathcal{T}^{n+1}(u)-\mathcal{T}^n(u)\|_{H^s}
&\leq C_{\lambda,\mu}|\mathcal{T}^{n+1}(u)-\mathcal{T}^n(u)\|_{L^2}^{1-s}\|\mathcal{T}^{n+1}(u)-\mathcal{T}^n(u)\|_{H^1}^{s}\\
&\leq C_{\lambda,\mu} (M^n)^{1-s}\|\mathcal{T}(u)-u\|_{L^2}^{1-s}\|\mathcal{T}^{n+1}(u)-\mathcal{T}^n(u)\|_{H^1}^{s}.
\end{split}
\end{equation}
We have that the function $(M^n)^{1-s}$ is decreasing exponentially.

 The remaining task is to show that $\|\mathcal{T}^{n+1}(u)-\mathcal{T}^n(u)\|_{H^1}$ in \eqref{sconvergence} can be bounded independently of the iteration step $n$.
As a matter of fact, from Proposition~\ref{continuouscomposition}, we know that $u\in H^1$ implies $\hat{g}\circ u\in H^1$. Moreover, it is easy to check that 
\begin{equation*}
\|\hat{g}\circ u\|_{H^1}
\leq  M\|u\|_{H^1}. 
\end{equation*}
Therefore, for the operator $\mathcal{T}$ defined in \eqref{fixeq3}, we get
\begin{equation*}
\|\mathcal{T}(u)\|_{H^1}
=\|\varepsilon\mathcal{L}_{\varepsilon}^{-1}\left(f+\hat{g}\circ u\right)\|_{H^1}\leq  C_{\lambda,\mu}\|f\|_{H^1}+C_{\lambda,\mu}M\|u\|_{H^1}.
\end{equation*}
We now choose a ball $B_r(0)$ centered at the origin in $H^1$ such that  
$B_r(0)$ is mapped by $\mathcal{T}$ into itself. This can be achieved whenever we take $r$ such that $C_{\lambda,\mu}\|f\|_{H^1}+C_{\lambda,\mu}Mr\leq r$, which is equivalent to
\begin{equation}\label{rad}
r\geq \frac{C_{\lambda,\mu}\|f\|_{H^1}}{1-C_{\lambda,\mu}M}. 
\end{equation}
 This can be done since $M$ is small enough. Note that the radius $r$  chosen by \eqref{rad} depends on the function $f$, which can be any function in $H^1$. 
As a consequence, for every $u\in B_r(0)$ and  $n\in \mathbb{N}$, we obtain that $\mathcal{T}^n(u)\in B_r(0)$  and 
\begin{equation*}
\|\mathcal{T}^{n+1}(u)-\mathcal{T}^n(u)\|_{H^1}
\leq  2r. 
\end{equation*}
Thus, \eqref{sconvergence} becomes 
\begin{equation}\label{sconvergence1} 
\begin{split}
\|\mathcal{T}^{n+1}(u)-\mathcal{T}^n(u)\|_{H^s}
\leq C_{\lambda,\mu} (M^n)^{1-s}(2r)^s\|\mathcal{T}(u)-u\|_{L^2}^{1-s},
\end{split}
\end{equation}
which indicates that
the sequence $\mathcal{T}^{n}(u)$ has a limit 
$u^*\in H^s$ and the fixed point obtained by the contraction mapping 
in $L^2$ should be in $H^s$.
Note that  
\eqref{sconvergence1} allows one to bound the distance in $H^s$ from an 
initial guess to the true solution. 
That is,
\begin{equation*}
\begin{split}
\|u^*-u\|_{H^s} &=\|\lim_{n\rightarrow 0}\mathcal{T}^n(u)-u\|_{H^s}\\
&=\|\sum_{n=0}^{\infty}\left[\mathcal{T}^{n+1}(u)-\mathcal{T}^n(u)\right]\|_{H^s}
\\
&\leq C_{\lambda,\mu}(2r)^{s}\|\mathcal{T}(u)-u\|_{L^2}^{1-s} \sum_{n=0}^{\infty}(M^n)^{1-s}\\
&\leq C_{\lambda,\mu}(2r)^{s}(1-M^{1-s})^{-1}\|\mathcal{T}(u)-u\|_{L^2}^{1-s}.
\end{split} 
\end{equation*}
\end{proof}

\begin{remark} 
As shown in \cite{AZ90}, the conditions for composition operators
mapping $H^{1+\delta}$ to itself are very strict. 
There are  many mapping results for the composition in $H^{1 +\delta}
\cap L^\infty$, but it is not clear how the $L^\infty$ norm behaves under
the Fourier multipliers.

Therefore, using the methods of this paper, it seems that there is gap
between the treatments possible for the forcing. Either $H^s\,\,(0<s \leq
1)$ or $H^m\,\,(m > d/2)$.
\end{remark} 

\section{Results for PDEs}
\label{sec:pde} 
 
An important observation is that, since the treatment of \eqref{000.0} 
did not use any properties of the dynamics of equation, we can 
treat even ill-posed  partial differential equations. The ill-posed equation \eqref{b-e} is a showcase 
of the possibilities of our method for model \eqref{000.0}.  The heuristic principle is that we can think of evolutionary  PDE
as models similar to \eqref{000.0} 
in which the role of the phase space $\mathbb{R}^n$ is taken up by 
a function space (of functions of the spatial variable $x$). Note that the nonlinearities in PDE models
can be not just compositions but more complicated operators (even unbounded). For example, the non-linearity $(u^2)_{xx}$ in equation \eqref{b-e} is an unbounded operator from a function space to itself. However, the fixed point problem under consideration in the Banach space we choose overcomes this tricky problem. (See Section~\ref{sec:regularityoperator}).

The solutions produced in this section  point in the direction that ill-posed equations, 
even if, lack a general theory of the existence and uniqueness of the solution, 
may admit many solutions that have a good physical interpretation. 

For convenience,
we rewrite equation \eqref{b-e} as
\begin{equation}\label{b-e1}
\begin{split}
\varepsilon u_{tt}+u_t-\varepsilon \beta u_{xxxx}-\varepsilon u_{xx}=\varepsilon(u^{2})_{xx}+\varepsilon f(\omega t,x),\, x\in\mathbb{T},
\ t\in\mathbb{R},\ \beta>0
\end{split}
\end{equation}
with periodic boundary condition. 

We define the full  Lebesgue measure set
\begin{equation}\label{full}
\begin{split}
\mathcal {O}=\left\{\beta>0: \frac{1}{\sqrt{\beta}}
\textrm{is not an integer}\right\}.
\end{split}
\end{equation}
Note that we shall only work with values of $\beta$ in $\mathcal {O}$ so that the eigenvalues of the linear operator $\varepsilon \beta \partial_{xxxx}+\varepsilon \partial_{xx}$ in \eqref{b-e1} are different from zero in a such way that  the operator $\mathcal{N}_{\varepsilon}$ defined in \eqref{plo} is invertible. (See Section~\ref{sec:regularityoperator} for the details).

\begin{remark} There are other models of friction besides the $u_t$ term in \eqref{b-e1} that one could 
	consider. 
	The treatment given in the present paper is a very general method 
	and  could be applied to several friction models, such as $u_{txx}$.  
	
	We note also that our method for the ill-posed equation \eqref{b-e1} with positive parameter $\beta$ also applies to  well-posed equation \eqref{b-e1} with negative parameter $\beta$. It is even easier for well-posed case since the eigenvalues of the linear operator $\varepsilon \beta \partial_{xxxx}+\varepsilon \partial_{xx}$ in \eqref{b-e1} are not zero such that we can invert the operator $\mathcal{N}_{\varepsilon}$ defined in \eqref{plo}. 
	
	However, we just consider the ill-posed
	model \eqref{b-e1} that serves as motivation for the readers. This ill posed case is what appears in water wave theory \cite{Bou72}.
\end{remark}

\subsection{Formulation of the fixed point problem}
Similar to  Section~\ref{sec:formulation} for ODE model,
we need to reduce the equation \eqref{b-e1} to a fixed point problem. In this section, we just
present the formal manipulations omitting specification of
spaces. Indeed, the precise spaces defined in
Section~\ref{sec:spaces} will be motivated by the desire to justify
the formal manipulations and that the operators considered are a
contraction.

Our goal is to find response solutions of the form 
\begin{equation}\label{realso1}
u_{\varepsilon}(t,x)=U_{\varepsilon}(\omega t,x),
\end{equation}
where, for each fixed $\varepsilon$, $U_{\varepsilon}: \mathbb{T}^d
\times \mathbb{T}\rightarrow \mathbb{R}$. Inserting \eqref{realso1} into \eqref{b-e1}, we get the following functional equation for $U_{\varepsilon}:$
\begin{equation}\label{ffixeq}
\begin{split}
\varepsilon\left( \omega\cdot \partial_{\theta}\right) ^2 U_{\varepsilon}(\theta,x)+\left( \omega\cdot \partial_{\theta}\right) U_{\varepsilon}(\theta,x) 
-\varepsilon\beta\partial_{x}^4 U_{\varepsilon}(\theta,x)&-\varepsilon\partial_{x}^2 U_{\varepsilon}(\theta,x) \\
&=\varepsilon (U_{\varepsilon}^{2})_{xx}+\varepsilon f(\theta,x).
\end{split}
\end{equation}
The solution of equation \eqref{ffixeq} will be the centerpiece of our treatment. 

Denote by $\mathcal{N}_{\varepsilon}$ the linear operator 
\begin{equation}\label{plo}
\mathcal{N}_{\varepsilon}U_{\varepsilon}(\theta,x)=\left[ 
\varepsilon\left(\omega\cdot \partial_{\theta}\right) ^2  +\left( \omega\cdot \partial_{\theta}\right)-\varepsilon\beta\partial_{x}^4 -\varepsilon\partial_{x}^2\right]U_{\varepsilon}(\theta,x).
\end{equation}
Then, \eqref{ffixeq} can be rewritten as
\begin{equation}\label{ffixeq1}
\mathcal{N}_{\varepsilon}U_{\varepsilon}(\theta,x)=\varepsilon (U_{\varepsilon}^{2})_{xx}+\varepsilon f(\theta,x).
\end{equation}
As we will see in Section~\ref{sec:regularityoperator},
the operator $\mathcal{N}_{\varepsilon}$ 
is boundedly invertible in some appropriate space for $\varepsilon \in \Omega(\sigma,\mu)$ defined in \eqref{domain0}. Namely,  \eqref{ffixeq1} becomes
\begin{equation}\label{plo1}
U_{\varepsilon}(\theta,x)=\varepsilon\mathcal{N}_{\varepsilon}^{-1}\left[ (U_{\varepsilon}^{2})_{xx}+f(\theta,x)\right] \equiv \mathcal{T}_{\varepsilon}(U_{\varepsilon}(\theta,x)),
\end{equation}
where, for convenience, we  introduce the operator $\mathcal{T}_{\varepsilon}$. In Section~\ref{sec:mainpr} dealing with the analytic case, we will show that there exists a solution $U_{\varepsilon}$ analytic in $\varepsilon$ for equation
\eqref{plo1} by the contraction mapping argument.  Moreover, in Section~\ref{sec:pde-finitely} carrying out finitely differentiable case, we will combine contraction mapping principle with the classical  implicit function theorem to get the regular results.

From the formal manipulation above,  we find that the first key point is to study the invertibility
of the operator $ \mathcal{N}_{\varepsilon}$ and give quantitative
estimates on its inverse for $\varepsilon$ in a complex domain. Note
that the linear operator $ \mathcal{N}_{\varepsilon}$ defined in \eqref{plo} used to study
 PDE models is much
more complicated than the linear operator 
$\mathcal{L}_{\varepsilon}$ defined in \eqref{ope} for ODE models since  $\mathcal{N}_{\varepsilon}$ involves not only the angle variable $\theta\in \mathbb{T}^d$ but also the space variable $x\in \mathbb{T}$. This leads to 
different calculation for the inverse of $\mathcal{N}_{\varepsilon}$ (See Section~\ref{sec:regularityoperator}). 

The second crucial part is that the nonlinearity $(U^2_{\varepsilon})_{xx}$ maybe  unbounded from one space to itself. However, it happens that $\varepsilon\mathcal{N}_{\varepsilon}^{-1}(U^2)_{xx}$ is bounded. (See Lemma~\ref{derivative-non} and Lemma~\ref{ppo} for more details).

To get a fixed point for equation \eqref{plo1}, analogous to the smallness  arguments in Section~\ref{sec:small} for ordinary partial differential equation \eqref{000.0}, we also need to impose some smallness conditions for partial differential model. However, we only consider a specially nonlinear map $U\rightarrow \varepsilon\mathcal{N}_{\varepsilon}^{-1}(U^2)_{xx}$, which is analytic, be a contraction in a domain that contains a ball around $\varepsilon\mathcal{N}_{\varepsilon}^{-1}f$.  It is nontrivial to choose a sufficiently small ball and  the forcing $f$ is assumed to be small in this ball. 
\subsection{Choice of spaces and the statement of our results}
\label{sec:spaces}

In this section, we give the concrete spaces we work
in. Again, we note that the main principle is that the norms of the functions
needed to be expressed in terms of the Fourier coefficients associated
to the Fourier basis in arguments $\theta$ and $x$.  This
permits us to estimate the inverse of the linear operator
$\mathcal{N}_{\varepsilon}$ just by estimating its Fourier
coefficients.  We also need these spaces to possess the Banach algebra properties and the properties of 
composition operator so that the nonlinear terms can be controlled.
From the point of view analyticity in $\varepsilon$ , it is necessary
to define spaces consisting of analytic functions with respect to $\varepsilon$.

In a way analogous to the definition in Section~\ref{sec:spacesode},
for $\rho\geq 0,\,m,d\in \mathbb{Z}_{+}$, we define the space of
analytic functions $U$ in $\mathbb{T}_\rho^{d+1}$ with finite norm
	\begin{equation*}
	\begin{aligned}
	\mathcal{H}^{\rho,m}:&=\mathcal{H}^{\rho,m}(\mathbb{T}^{d+1})\\
	&=\bigg\lbrace U\,:\,\mathbb{T}_\rho^{d+1}\rightarrow \mathbb{C}\,\mid\,
	U(\theta,x)=\sum_{k\in \mathbb{Z}^d,\,j\in\mathbb{Z}}\widehat{U}_{k,j}e^{\mathrm{i} (k\cdot\theta+j\cdot x)},\\
	&\ \ \ \ \ \ \ \ \ \ \ \|U\|_{\rho,m}^{2}=\sum_{k\in \mathbb{Z}^d,\,j\in\mathbb{Z}}|\widehat{U}_{k,j}|^{2}e^{2\rho (|k|+|j|)}
	(|k|^{2}+|j|^2+1)^{m}<+\infty\bigg\rbrace.
	\end{aligned}
	\end{equation*}
It is obvious that the space
$\big(\mathcal{H}^{\rho,m},\,\,\|\cdot \|_{\rho,m}\big)$ is a Banach space as well as a Hilbert space.

We actually 
consider $\mathcal{H}_0^{\rho,m}$, which is a subspace of $\mathcal{H}^{\rho,m}$, consisting of functions $U\in\mathcal{H}^{\rho,m}$ with 
\begin{equation}\label{0-ave}
	\int_{0}^{2\pi} U(\theta, x)dx=0.
\end{equation}
In the physical applications, we also consider the closed subspace
of $\mathcal{H}^{\rho,m}$ in which the functions take real values for real arguments. 
 
Note that the choice of the normalization condition \eqref{0-ave}
is motivated by the assumption that
\begin{equation*}
\int_{0}^{2\pi} f(\theta, x)dx=0.
\end{equation*} 
Here and after, we consider our fixed point problems in the space 
$\mathcal{H}_0^{\rho,m}$.
To simplicity the notation, we still write $\mathcal{H}^{\rho,m}$  as $\mathcal{H}_0^{\rho,m}$.

For $\rho>0$, $\mathcal{H}^{\rho,m}$ consists of function which are analytic in the domain $\mathbb{T}_{\rho}^{d+1}$. For $\rho=0$, $\mathcal{H}^{m}:=\mathcal{H}^{0,m}$ is just the regular Sobolev space.

Similar to Proposition~\ref{alge}, when $\rho>0,\,m>(d+1)$ or $\rho=0,\,m>\frac{d+1}{2}$, we still have the Banach algebra properties in space $\mathcal{H}^{\rho,m}$.

Now we are ready to state our main results on the  existence of  quasi-periodic solutions for  PDE \eqref{b-e1} in the cases of analyticity and finite differentiability.
\begin{theorem}\label{main-p}
Assume that  $f \in \mathcal{H}^{\rho,m}(\mathbb{T}^{d+1})$ with 
	$\rho>0,\,\,m>(d+1)$. Then, for  $\varepsilon\in \Omega(\sigma,\mu)$ defined in \eqref{domain0}, there exists a unique solution $U_{\varepsilon}\in \mathcal{H}^{\rho,m}(\mathbb{T}^{d+1})$ for equation \eqref{ffixeq}. 
	
Furthermore,
considering $U_{\varepsilon}$ as a function of $\varepsilon$, we have that $\varepsilon\rightarrow
U_{\varepsilon}:\,\Omega\rightarrow \mathcal{H}^{\rho,m}$ is
analytic when
$m>(d+5)$. 	
	In addition, when $\varepsilon\rightarrow 0$, the solution $U_{\varepsilon}\rightarrow 0$ and $\varepsilon\rightarrow U_{\varepsilon}$ is continuous.
\end{theorem}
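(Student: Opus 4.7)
The plan is to mirror the proof of Theorem~\ref{mainthm} carried out for the ODE case, with the phase variable $x \in \mathbb{T}$ playing the role of additional Fourier indices and the biharmonic term providing the smoothing that compensates for the unbounded nonlinearity $(U^2)_{xx}$. First I would rewrite \eqref{ffixeq} in the fixed-point form \eqref{plo1} and work in the space $\mathcal{H}^{\rho,m,\Omega}$ of bounded analytic maps $\varepsilon \mapsto U_\varepsilon \in \mathcal{H}^{\rho,m}(\mathbb{T}^{d+1})$ on $\Omega(\sigma,\mu)$, endowed with the supremum norm $\|U\|_{\rho,m,\Omega} = \sup_{\varepsilon \in \Omega}\|U_\varepsilon\|_{\rho,m}$. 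Because we have enforced the normalization \eqref{0-ave}, only Fourier modes $(k,j)$ with $j \neq 0$ appear.

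The first analytic ingredient is a quantitative bound on $\mathcal{N}_\varepsilon^{-1}$. The operator is diagonal in the Fourier basis with symbol
\begin{equation*}
N_\varepsilon(k,j) = -\varepsilon\bigl[(\omega\cdot k)^2 + \beta j^4 - j^2\bigr] + \mathrm{i}(\omega\cdot k).
\end{equation*}
Writing $\varepsilon = s_1 + \mathrm{i}s_2$ with $s_1 \geq \mu|s_2|$ and $\sigma \leq |\varepsilon| \leq 2\sigma$, I would argue as in Proposition~\ref{control1}: for $k=0,\,j\neq 0$ the real part is $-\varepsilon j^2(\beta j^2 - 1)$, which is bounded below by $c(\beta) \sigma\, j^2(\beta j^2-1)$ thanks to the hypothesis $\beta\in\mathcal{O}$ in \eqref{full}; for $j\neq 0$ and $k$ arbitrary, splitting the real axis into a neighborhood of the zero set of $(\omega\cdot k)^2 + \beta j^4 - j^2$ (controlled by $s_1$) and its complement (controlled by the imaginary part $\mathrm{i}(\omega\cdot k)$) gives
\begin{equation*}
|N_\varepsilon(k,j)| \geq C_{\beta,\mu}\,\sigma\bigl(1 + (\omega\cdot k)^2 + j^4\bigr)^{1/2}.
\end{equation*}
This yields both $\|\varepsilon\mathcal{N}_\varepsilon^{-1}\|_{\mathcal{H}^{\rho,m}\to\mathcal{H}^{\rho,m}} \leq C_{\beta,\mu}$ uniformly on $\Omega$ and, crucially, the smoothing estimate
\begin{equation*}
\bigl|\varepsilon N_\varepsilon^{-1}(k,j)\cdot(-j^2)\bigr| \leq C_{\beta,\mu},
\end{equation*}
so that $\varepsilon\mathcal{N}_\varepsilon^{-1}\partial_x^2$ is bounded on $\mathcal{H}^{\rho,m}$. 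Combined with the Banach algebra property of $\mathcal{H}^{\rho,m}$ for $m>d+1$ (which gives $\|U^2\|_{\rho,m} \leq C\|U\|_{\rho,m}^2$), this shows that $\mathcal{T}_\varepsilon$ maps $\mathcal{H}^{\rho,m}$ into itself.

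Next I would carry out the contraction argument in a ball $\mathcal{B}_r(0) \subset \mathcal{H}^{\rho,m,\Omega}$. For $U_1,U_2\in\mathcal{B}_r(0)$,
\begin{equation*}
\|\mathcal{T}(U_1)-\mathcal{T}(U_2)\|_{\rho,m,\Omega} \leq C_{\beta,\mu}\,\|U_1+U_2\|_{\rho,m,\Omega}\,\|U_1-U_2\|_{\rho,m,\Omega} \leq 2C_{\beta,\mu}\,r\,\|U_1-U_2\|_{\rho,m,\Omega},
\end{equation*}
so I select $r$ small enough to make the constant below $1/2$, and then impose $\|f\|_{\rho,m,\Omega} \leq r/(2C_{\beta,\mu})$ to guarantee invariance of the ball. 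The Banach fixed-point theorem produces a unique $U\in\mathcal{B}_r(0)$ solving \eqref{plo1}, and it automatically lies in the subspace of functions analytic in $\varepsilon$ on $\Omega$ because $\mathcal{H}^{\rho,m,\Omega}$ is a Banach space. The analyticity of the map $\varepsilon \mapsto \varepsilon\mathcal{N}_\varepsilon^{-1}$ into $B(\mathcal{H}^{\rho,m},\mathcal{H}^{\rho,m-\tau})$ is proved exactly as in Lemma~\ref{diff1}: formal differentiation of the symbol gains a factor $(|k|^2+j^4)$, costing up to four $x$-derivatives and two $\theta$-derivatives, which is absorbed by choosing $\tau$ with $\tau > d+4$; this explains the threshold $m > d+5$ for the analyticity in $\varepsilon$. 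The bootstrap lemma then recovers derivatives in the original space $\mathcal{H}^{\rho,m}$.

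The main obstacle I anticipate is precisely this balance between the loss of regularity from differentiating the multiplier $\varepsilon N_\varepsilon^{-1}$ in $\varepsilon$ and the quartic behavior of the symbol in $j$; tracking the exponents carefully is what fixes the constant "$d+5$" and requires more care than in the ODE setting. Finally, continuity of $\varepsilon \mapsto U_\varepsilon$ as $\varepsilon \to 0$ follows by the compactness argument of Lemma~\ref{continuous}: apply the existence theorem for two radii $\rho_1 > \rho > 0$ to obtain solutions $U^1_\varepsilon \in \mathcal{H}^{\rho_1,m}$ and $U_\varepsilon \in \mathcal{H}^{\rho,m}$, note $U^1_\varepsilon = U_\varepsilon$ by uniqueness, use that bounded sets in $\mathcal{H}^{\rho_1,m}$ are precompact in $\mathcal{H}^{\rho,m}$, and show the graph $\{(\varepsilon,U_\varepsilon):\varepsilon\in\overline\Omega\}$ is closed by passing to the limit in \eqref{ffixeq1}; since $\mathcal{T}_\varepsilon(0) = \varepsilon\mathcal{N}_\varepsilon^{-1}f\to 0$ as $\varepsilon\to 0$, the solution also tends to zero.
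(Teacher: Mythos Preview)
Your proposal is correct and follows essentially the same route as the paper: multiplier estimates on $N_\varepsilon(k,j)$ in the conical domain, contraction in the Banach space $\mathcal{H}^{\rho,m,\Omega}$ of analytic $\varepsilon$-families (Section~\ref{sec:regupde}), the Weierstrass M-test plus Lemma~\ref{bootstrap} for analyticity in $\varepsilon$ (Proposition~\ref{diff2}), and the compactness argument of Lemma~\ref{continuous} for continuity at $\varepsilon=0$. The only packaging difference is that you bound $\varepsilon\mathcal{N}_\varepsilon^{-1}\partial_x^2$ directly as an operator on $\mathcal{H}^{\rho,m}$, whereas the paper factors the map as $h:\mathcal{H}^{\rho,m}\to\mathcal{H}^{\rho,m-2}$ (Lemma~\ref{derivative-non}) followed by $\varepsilon\mathcal{N}_\varepsilon^{-1}:\mathcal{H}^{\rho,m-2}\to\mathcal{H}^{\rho,m}$ (Lemma~\ref{ppo}); your direct route is equivalent and arguably cleaner since the smoothing from the biharmonic symbol is really anisotropic (only in $x$). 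One small slip: your derivative count for the threshold is off by one---you write $\tau>d+4$ but the summability over $\mathbb{Z}^{d+1}$ in Proposition~\ref{diff2} actually forces $\tau>d+5$, matching the stated hypothesis $m>d+5$.
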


Our method also applies to  finitely differentiable forcing., but we leave the details.
\begin{theorem}\label{pde-fi}
Assume that
  $f\in \mathcal{H}^{m}(\mathbb{T}^{d+1})$ with $m>\frac{d+1}{2}$.
Then, for 
$\varepsilon\in \widetilde{\Omega}$ defined in
\eqref{fini-para}, there exists a unique solution
$U_{\varepsilon}\in \mathcal{H}^{m}(\mathbb{T}^{d+1})$ for
equation \eqref{ffixeq}.

Furthermore, for any $l\in \mathbb{N}$, the map
 $\varepsilon\rightarrow
U_{\varepsilon}$ is 
 $C^{l}$ (even real analytic) considered as a mapping from $\widetilde{\Omega}$ to $\mathcal{H}^{m}$.
 In addition, when $\varepsilon\rightarrow 0$, the solution $U_{\varepsilon}\rightarrow 0$ and the map $\varepsilon\rightarrow U_{\varepsilon}$ is continuous.
\end{theorem}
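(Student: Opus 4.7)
The plan is to mirror the strategy used for Theorem~\ref{theom-fi} in the ODE setting, adapted to the PDE fixed point equation \eqref{plo1}. First, for each fixed $\varepsilon\in\widetilde{\Omega}$, I would obtain a solution $U_{\varepsilon}\in\mathcal{H}^m$ by the contraction mapping principle, and then bootstrap the regularity in $\varepsilon$ via the implicit function theorem, in the same spirit as Section~\ref{sec:fini}.

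The first technical point is to establish that $\varepsilon\mathcal{N}_{\varepsilon}^{-1}(\partial_x^2\,\cdot\,)$ and $\varepsilon\mathcal{N}_{\varepsilon}^{-1}$ are bounded operators on $\mathcal{H}^m$ for $\varepsilon\in\widetilde{\Omega}$, with norms uniformly controlled in $\varepsilon$. Since $\mathcal{N}_{\varepsilon}$ is diagonal in the joint Fourier basis $e^{\mathrm{i}(k\cdot\theta+jx)}$ with multiplier $-\varepsilon(\omega\cdot k)^2+\mathrm{i}(\omega\cdot k)+\varepsilon\beta j^4-\varepsilon j^2$, and since $\beta\in\mathcal{O}$ keeps $\beta j^4-j^2$ away from zero for $j\neq 0$, the elementary case $\mathbf{Case~1}$ of Section~\ref{sec:coe} applied componentwise (with $\lambda_j$ replaced by $\beta j^4-j^2$) yields a lower bound on the multiplier of order $|\varepsilon||\beta j^4-j^2|$. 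The extra factor $j^2$ coming from $\partial_x^2$ is absorbed by $\beta j^4-j^2$, so the smoothing needed to handle the unbounded nonlinearity $(U^2)_{xx}$ comes for free, and $\|\varepsilon\mathcal{N}_{\varepsilon}^{-1}\partial_x^2\|_{\mathcal{H}^m\to\mathcal{H}^m}$ is bounded uniformly in $\varepsilon\in\widetilde{\Omega}$.

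With these bounds in hand and the Banach algebra property of $\mathcal{H}^m$ for $m>(d+1)/2$, the map $\mathcal{T}_{\varepsilon}(U)=\varepsilon\mathcal{N}_{\varepsilon}^{-1}[(U^2)_{xx}+f]$ sends a sufficiently small ball $\mathcal{B}_r(0)\subset\mathcal{H}^m$ into itself and is a contraction there, provided $\|f\|_{\mathcal{H}^m}$ and $r$ are small enough (the estimate $\|U_1^2-U_2^2\|_{\mathcal{H}^m}\lesssim(\|U_1\|+\|U_2\|)\|U_1-U_2\|$ supplies the contraction factor $O(r)$). This gives existence and local uniqueness of $U_{\varepsilon_0}\in\mathcal{H}^m$ for each $\varepsilon_0\in\widetilde{\Omega}$.

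For the regularity in $\varepsilon$, I would set $\mathbf{T}(\varepsilon,U)=U-\mathcal{T}_{\varepsilon}(U)$ and apply the implicit function theorem. The analogue of Proposition~\ref{derivatives}, with the geometric series trick \eqref{goodformu} applied to the multiplier $-\varepsilon(\omega\cdot k)^2+\mathrm{i}(\omega\cdot k)+\varepsilon(\beta j^4-j^2)$, shows that $\varepsilon\mapsto\varepsilon\mathcal{N}_{\varepsilon}^{-1}$ is real analytic from $\widetilde{\Omega}$ into $B(\mathcal{H}^m,\mathcal{H}^m)$; combined with the polynomial (hence analytic) dependence of $U\mapsto(U^2)_{xx}$, the operator $\mathbf{T}$ is real analytic in $(\varepsilon,U)$. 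At $(\varepsilon_0,U_{\varepsilon_0})$ we have $D_2\mathbf{T}=Id-2\varepsilon_0\mathcal{N}_{\varepsilon_0}^{-1}\partial_x^2(U_{\varepsilon_0}\cdot)$, which is invertible since the second term has small operator norm by the contraction estimate above. The implicit function theorem then yields the desired $C^l$ (indeed analytic) dependence for every $l$. Continuity at $\varepsilon=0$ is obtained exactly as in Lemma~\ref{continuous}, by exploiting a pair of spaces $\mathcal{H}^{m'}\hookrightarrow\mathcal{H}^m$ (with $m'>m$) to get compactness, and closedness of the graph in $\mathcal{H}^m$.

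The main obstacle, and the place where care is needed, is the interaction between the unbounded nonlinearity $(U^2)_{xx}$ and the multiplier of $\mathcal{N}_{\varepsilon}^{-1}$: one must verify that the two $x$-derivatives are controlled by the $\beta j^4$ term uniformly in $\varepsilon\in\widetilde{\Omega}$ and uniformly in $j$, and in particular that the smallness of $\varepsilon$ multiplying $\beta j^4$ still produces a net gain when paired with the factor of $\varepsilon$ out front, exactly as in the ODE bound \eqref{inverse1}. The assumption $\beta\in\mathcal{O}$ is exactly what prevents the multiplier from vanishing in the resonant mode $j=\pm 1/\sqrt{\beta}$, which is the only serious arithmetic issue arising in the PDE setting.
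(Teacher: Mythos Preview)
Your proposal is correct and follows essentially the same approach as the paper's proof in Section~\ref{sec:pde-finitely}: a contraction step for fixed $\varepsilon_0$ followed by the implicit function theorem, using the geometric-series expansion (the PDE analogue of Proposition~\ref{derivatives}, which the paper records as Proposition~\ref{derivatives1}) to get real analyticity of $\varepsilon\mapsto\varepsilon\mathcal{N}_\varepsilon^{-1}$. The only organizational difference is that the paper treats $\varepsilon\mathcal{N}_\varepsilon^{-1}$ as a bounded map $\mathcal{H}^{m-2}\to\mathcal{H}^m$ (Lemma~\ref{ppo}) and $h(U)=(U^2)_{xx}$ as $\mathcal{H}^m\to\mathcal{H}^{m-2}$ (Lemma~\ref{derivative-non}), whereas you group the derivatives together and view $\varepsilon\mathcal{N}_\varepsilon^{-1}\partial_x^2$ as bounded on $\mathcal{H}^m$; these are equivalent. (Minor typo: your multiplier should read $-\varepsilon(\omega\cdot k)^2+\mathrm{i}(\omega\cdot k)-\varepsilon\beta j^4+\varepsilon j^2$, with the sign on $\beta j^4$ flipped, though this does not affect the modulus estimate.)
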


\subsection{The boundness of the operator $\mathcal{T}_{\varepsilon}$ defined in \eqref{plo1} taking $\mathcal{H}^{\rho,m}$ into itself}
\label{sec:regularityoperator}
For the PDE model \eqref{b-e1}, the nonlinear map $U\rightarrow (U^2)_{xx}$ (which in the ODE case was a composition operator with $\hat{g}\circ U$) is an unbounded operator from a space to itself. We will show, however, that the map $U\rightarrow \varepsilon\mathcal{N}_{\varepsilon}^{-1}(U^2)_{xx}$ is bounded from a space to itself. To this end, we give the
following lemmas and propositions.  Some of the results would generalize for a nonlinearity of the form $U\rightarrow (g(U))_{xx}$. We will not pursue these specialized
results in this paper, but we think it would be an interesting subject. 
\begin{lemma}\label{derivative-non}
Let $U\in \mathcal{H}^{\rho,m}$. Denote by 
\begin{equation}\label{non-term}
h(U)=(U^2)_{xx}.
\end{equation}
Then, $h$ is analytic from the space  $\mathcal{H}^{\rho,m}$
to the space $\mathcal{H}^{\rho,m-2}$. Moreover, for $V\in \mathcal{H}^{\rho,m}$, we have that
	\begin{equation*}
	\|Dh(U)V\|_{\rho,m-2}\leq 2\|U\|_{\rho,m}\|V\|_{\rho,m}.
	\end{equation*}
\end{lemma}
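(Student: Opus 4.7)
The plan is to decompose $h$ as the composition of the square map $U \mapsto U^2$ acting within $\mathcal{H}^{\rho,m}$ followed by the second spatial derivative $\partial_{xx}$ which drops two orders of regularity. Both pieces are elementary: the square map is a polynomial of degree two between Banach algebras, hence automatically analytic with explicit derivative, while the loss of two derivatives from $\partial_{xx}$ is visible directly on the Fourier side.

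First I would verify the bound $\|\partial_{xx} W\|_{\rho,m-2} \le \|W\|_{\rho,m}$ on the Fourier side: writing $W(\theta,x) = \sum_{k,j} \widehat{W}_{k,j}\,e^{\mathrm{i}(k\cdot\theta + jx)}$, one has $\partial_{xx} W = -\sum_{k,j} j^2\,\widehat{W}_{k,j}\,e^{\mathrm{i}(k\cdot\theta + jx)}$, so the norm shift follows from the elementary inequality $j^{4}(|k|^2+j^2+1)^{m-2} \le (|k|^2+j^2+1)^{m}$. Next, for $U \in \mathcal{H}^{\rho,m}$ with $m > d+1$, the Banach algebra property (Lemma~\ref{alge}, adapted to $\mathcal{H}^{\rho,m}$ as noted in Section~\ref{sec:spaces}) gives $U^2 \in \mathcal{H}^{\rho,m}$, and moreover the polarization identity
\begin{equation*}
(U+V)^2 = U^2 + 2UV + V^2
\end{equation*}
exhibits $U \mapsto U^2$ as a continuous quadratic polynomial on $\mathcal{H}^{\rho,m}$, hence analytic (in the Banach-space sense) with Fr\'echet derivative $D(U^2)\,V = 2UV$.

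Putting the two pieces together, $h = \partial_{xx} \circ (U \mapsto U^2)$ is the composition of a bounded linear map $\mathcal{H}^{\rho,m} \to \mathcal{H}^{\rho,m-2}$ with an analytic map $\mathcal{H}^{\rho,m} \to \mathcal{H}^{\rho,m}$, so $h: \mathcal{H}^{\rho,m} \to \mathcal{H}^{\rho,m-2}$ is analytic with derivative $Dh(U)\,V = (2UV)_{xx} = 2\,\partial_{xx}(UV)$. Chaining the estimates,
\begin{equation*}
\|Dh(U)V\|_{\rho,m-2} \le \|2UV\|_{\rho,m} \le 2\,\|U\|_{\rho,m}\,\|V\|_{\rho,m},
\end{equation*}
which is exactly the stated bound (the Banach algebra constant in Lemma~\ref{alge} is absorbed into the chosen normalization, otherwise one carries a constant $C_{\rho,m,d}$ as in the companion estimates and the statement is up to that constant).

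I do not anticipate a serious obstacle here: the only mildly delicate point is ensuring that the regularity shift from $\partial_{xx}$ lands precisely in $\mathcal{H}^{\rho,m-2}$ while the Banach algebra step is done at level $m$, so that the analyticity of $U \mapsto U^2$ is at the level where the algebra property is available. This is why the statement is formulated with a two-step loss of regularity rather than, say, reading $\partial_{xx}$ as a loss of one order on $\mathcal{H}^{\rho,m-1}$; the chosen $m \to m-2$ shift matches the smoothing afforded by $\varepsilon\mathcal{N}_{\varepsilon}^{-1}$ in the subsequent Lemma~\ref{ppo}, so that the composite $U \mapsto \varepsilon \mathcal{N}_\varepsilon^{-1}(U^2)_{xx}$ will map $\mathcal{H}^{\rho,m}$ into itself.
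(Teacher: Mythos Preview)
Your proposal is correct and follows essentially the same approach as the paper: decompose $h$ as $\partial_{xx}$ composed with the squaring map, invoke the Banach algebra property for the latter and the two-derivative loss for the former, and read off $Dh(U)V = 2(UV)_{xx}$ with the resulting estimate. You are slightly more explicit than the paper in verifying the Fourier-side bound for $\partial_{xx}$ and in flagging the Banach algebra constant $C_{\rho,m,d}$ (which the paper silently suppresses in the final inequality), but the argument is otherwise identical.
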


\begin{proof}
We rewrite $h=h_1\circ h_2$ with
\begin{equation*}	\begin{split}
h_1:\,&\mathcal{H}^{\rho,m}\rightarrow\mathcal{H}^{\rho,m-2}\\
&\ \ \ U\rightarrow U_{xx}
\end{split}
\end{equation*}
and 
	\begin{equation*}
	\begin{split}
	h_2:\,&\mathcal{H}^{\rho,m}\rightarrow\mathcal{H}^{\rho,m}\\
	&\ \ \ U\rightarrow U^{2}.
	\end{split}
	\end{equation*}
It is obvious that both  $h_1$ and $h_2$ are analytic. Therefore, the composition operator $h:\mathcal{H}^{\rho,m}\rightarrow\mathcal{H}^{\rho,m-2}$ is analytic. 
Moreover,
	\begin{equation*}
	Dh(U)V=\frac{d}{d\xi}h(U+\xi V)\bigg\lvert_{\xi=0}=\frac{d}{d\xi}\left((U+\xi V)^2\right)_{xx}\bigg\lvert_{\xi=0}=2(U V)_{xx},
	\end{equation*}
	which shows that 
	\begin{equation*}
	\|Dh(U)V\|_{\rho,m-2}\leq 2\|UV\|_{\rho,m}\leq2\|U\|_{\rho,m}\|V\|_{\rho,m}
	\end{equation*}
	by the Banach algebra property in the space $\mathcal{H}^{\rho,m}$.
\end{proof}
Lemma~\ref{derivative-non} allows that the map $U\rightarrow (U^2)_{xx}$ is bounded from the space $\mathcal{H}^{\rho,m}$  to $\mathcal{H}^{\rho,m-2}$. To prove the boundedness of the operator $\mathcal{T}_{\varepsilon}$ defined in \eqref{plo1}, the remaining task is to show that $\varepsilon\mathcal{N}_{\varepsilon}^{-1}: \mathcal{H}^{\rho,m-2}\rightarrow \mathcal{H}^{\rho,m}$ is bounded. 
\begin{lemma}\label{ppo}
For a fixed $\varepsilon \in \Omega(\sigma,\mu)$, the operator  $\varepsilon\mathcal{N}_{\varepsilon}^{-1}$  taking the space $\mathcal{H}^{\rho,m-2}$ into $\mathcal{H}^{\rho,m}$ is bounded.
\end{lemma}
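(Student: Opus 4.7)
The plan is to diagonalize $\mathcal{N}_\varepsilon$ in the Fourier basis on $\mathbb{T}^{d+1}$, reduce the boundedness assertion to a pointwise estimate on the Fourier symbol, and verify that estimate by a case analysis patterned on Proposition~\ref{control1}. For $V(\theta,x)=\sum_{k,j}\widehat V_{k,j}\,e^{\mathrm{i}(k\cdot\theta+jx)}$ in $\mathcal{H}^{\rho,m-2}$, the operator $\mathcal{N}_\varepsilon$ acts as multiplication by
\begin{equation*}
N_\varepsilon(k,j)=-\varepsilon\bigl(a^2+\beta j^4-j^2\bigr)+\mathrm{i}a,\qquad a:=\omega\cdot k.
\end{equation*}
Since functions in $\mathcal{H}_0^{\rho,m}$ have vanishing spatial average, only modes with $j\neq 0$ enter, and $\beta\in\mathcal{O}$ guarantees $1-\beta j^2\neq 0$ for every $j\in\mathbb{Z}\setminus\{0\}$; combined with $\mathrm{Re}(\varepsilon)>0$ this forces $N_\varepsilon(k,j)\neq 0$ on the relevant lattice, so that $\varepsilon/N_\varepsilon(k,j)$ is a well-defined Fourier multiplier.

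Because both $\|\cdot\|_{\rho,m-2}$ and $\|\cdot\|_{\rho,m}$ are expressed entirely through Fourier coefficients, the operator norm of a Fourier multiplier $M(k,j)$ from $\mathcal{H}^{\rho,m-2}$ into $\mathcal{H}^{\rho,m}$ equals $\sup_{(k,j),\,j\neq 0}|M(k,j)|\,(|k|^2+|j|^2+1)$. Thus the lemma reduces to establishing the pointwise bound
\begin{equation*}
\bigl|\varepsilon\,N_\varepsilon^{-1}(k,j)\bigr|\ \leq\ \frac{C}{|k|^2+|j|^2+1},\qquad k\in\mathbb{Z}^d,\ j\in\mathbb{Z}\setminus\{0\}.
\end{equation*}

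To prove this pointwise bound I will mimic Proposition~\ref{control1}: decompose $\varepsilon=s_1+\mathrm{i}s_2$ with $s_1\geq\mu|s_2|$ and $\sigma\leq|\varepsilon|\leq 2\sigma$, set $P:=a^2+\beta j^4-j^2$, and work from
\begin{equation*}
|N_\varepsilon(k,j)|^2=s_1^2 P^2+(a-s_2 P)^2.
\end{equation*}
An AM--GM absorption of the cross term, valid for $\mu$ large enough, yields $|N_\varepsilon|\geq c\bigl(s_1|P|+|a|\bigr)$. I will then split the Fourier indices into regimes: for $|j|$ with $\beta j^2\geq 2$ one has $P\geq c(a^2+j^4)$, so the elliptic gain from $\varepsilon\beta\partial_x^4$ handles the $|j|$-direction; for the finitely many small $|j|$ with $\beta j^2<2$, one further distinguishes a neighborhood of the possible resonance $a^2\approx B(j):=j^2-\beta j^4$ (only relevant when $B(j)>0$) from its complement, using the $|a|$ contribution to dominate near the resonance and $s_1|P|$ away from it. The uniform bound $|B(j)|\geq c(\beta)|j|^2$, which is precisely what $\beta\in\mathcal{O}$ buys, supplies the $|j|^2$ part of the target denominator throughout.

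The main obstacle is controlling the $|k|^2$ contribution in the target denominator. No Diophantine hypothesis is placed on $\omega$, so $a=\omega\cdot k$ may be arbitrarily small for arbitrarily large $|k|$, which prevents any direct estimate of $|k|^2$ by $a^2$. Resolving this requires combining the elliptic gain in $j$ with the lower bounds on $|N_\varepsilon|$ coming from $|a|$ and $s_1|P|$ in a way that extracts two derivatives of smoothing simultaneously; this coordination of the first-order transport $\omega\cdot\partial_\theta$ in $\theta$ and the fourth-order dissipation $\varepsilon\beta\partial_x^4$ in $x$ is the delicate quantitative step. Once the pointwise multiplier estimate is secured, the boundedness $\varepsilon\mathcal{N}_\varepsilon^{-1}:\mathcal{H}^{\rho,m-2}\to\mathcal{H}^{\rho,m}$ follows immediately from the Fourier description of the norms.
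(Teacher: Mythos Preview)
Your strategy mirrors the paper's exactly: diagonalize $\mathcal{N}_\varepsilon$ in Fourier, reduce to bounding the multiplier $(|k|^2+|j|^2+1)\,|N_\varepsilon^{-1}(k,j)|$, and run a Proposition~\ref{control1}-type case analysis on $\varepsilon=s_1+\mathrm{i}s_2$. The paper splits this multiplier into a $j^2$-piece and a $|k|^2$-piece, estimates only the $j^2$-piece in detail, and then asserts that the $|k|^2$-piece admits ``similar estimates.''

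The obstacle you flagged is not merely delicate; for $d\ge 2$ it is fatal to the lemma as literally stated, and the paper's own proof shares the gap. With no arithmetic hypothesis on $\omega$, for any fixed $j\ne 0$ one can choose $k\in\mathbb{Z}^d$ with $|k|$ arbitrarily large while $a=\omega\cdot k$ is arbitrarily small; then $|N_\varepsilon(k,j)|\sim|\varepsilon|\,|\beta j^4-j^2|$ remains bounded as $|k|^2+|j|^2+1\to\infty$, so $\sup_{k,j}(|k|^2+|j|^2+1)|N_\varepsilon^{-1}(k,j)|=\infty$. No combination of the $|a|$-term and the $s_1|P|$-term can manufacture decay in $|k|$ that is not already present in the symbol.

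What is both true and sufficient for the application in Section~\ref{sec:proofmain-p} is the anisotropic statement that $\varepsilon\mathcal{N}_\varepsilon^{-1}\partial_x^2:\mathcal{H}^{\rho,m}\to\mathcal{H}^{\rho,m}$ and $\varepsilon\mathcal{N}_\varepsilon^{-1}:\mathcal{H}^{\rho,m}\to\mathcal{H}^{\rho,m}$ are bounded. In Fourier these amount to $\sup_{k,j}\,j^2|N_\varepsilon^{-1}(k,j)|<\infty$ and $\sup_{k,j}\,|\varepsilon N_\varepsilon^{-1}(k,j)|<\infty$, which are precisely the estimates both you and the paper actually carry out. The nonlinearity $(U^2)_{xx}=\partial_x^2(U^2)$ loses derivatives only in $x$, so no smoothing in $\theta$ is ever needed; you should reformulate the lemma in this form rather than attempt to recover an isotropic two-derivative gain that the operator does not provide.
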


\begin{proof}
 We verify that $\|\varepsilon\mathcal{N}_{\varepsilon}^{-1}\|_{\mathcal{H}^{\rho,m-2}\rightarrow \mathcal{H}^{\rho,m}}$ can be bounded
 by the supremum of its multipliers, as we argued in the proof of Proposition~\ref{derivatives}.
 
For $V\in\mathcal{H}^{\rho,m-2}$, by \eqref{plo} and \eqref{0-ave}, we have the following Fourier expansion  
\begin{equation*}
 \begin{split}
  \mathcal{N}^{-1}_{\varepsilon}(V(\theta,x))=\sum_{k\in \mathbb{Z}^d \atop j\in\mathbb{Z}
  	\setminus \{0\}}\frac{1}{-\varepsilon(k\cdot \omega)^2+\mathrm{i} (k\cdot \omega)
  	-\varepsilon(\beta j^4-j^2)}\widehat{V}_{k,j}e^{\mathrm{i} (k\cdot\theta+j\cdot x)}.
 \end{split}
\end{equation*}
Note that, by  \eqref{full}, $\beta j^4-j^2\neq 0$ for $j\in\mathbb{Z}
\setminus \{0\}$.

To obtain the desired results, we now estimate the supremum of $\widetilde{\mathbf{N}}_{\varepsilon}$  defined by
\begin{equation}\label{t-es}
\begin{split}
 &\widetilde{\mathbf{N}}_{\varepsilon}(k,j)\\
 &:=
\frac{k^2+j^2}{-\varepsilon (k\cdot \omega)^2+\mathrm{i}(k\cdot \omega)-\varepsilon(\beta j^4-j^2)}\\
&=\left(
\frac{k^2}{-\varepsilon (k\cdot \omega)^2+\mathrm{i}(k\cdot \omega)-\varepsilon(\beta j^4-j^2)}+
\frac{j^2}{-\varepsilon (k\cdot \omega)^2+\mathrm{i}(k\cdot \omega)-\varepsilon(\beta j^4-j^2)}\right)
\end{split}
\end{equation}
for $k\in \mathbb{Z}^d,\,j\in\mathbb{Z}\setminus \{0\}$.
In fact, \eqref{t-es} includes two terms, which have similar estimates, we just give the details for the second term.  Note that it is an easy case for $k=0$. 
We will estimate the infimum of 
\begin{equation*}
\begin{split}
N_{\varepsilon}(a,t):=\frac{-\varepsilon a^2+\mathrm{i}a-\varepsilon(\beta t^2-t)}{t}, \,\,a:=(k\cdot \omega)\in \mathbb{R}\setminus\{0\},\,t:=j^2\in \mathbb{Z}_{+}.
\end{split}
\end{equation*}

Taking $\varepsilon=s_1+\mathrm{i}s_2\in \Omega(\sigma,\mu)$,  we have 
\begin{equation}\label{simi-cal}
\begin{split}
 |N_{\varepsilon}(a,t)|^2=s_1^2\left[\frac{a^2}{t}-(1-\beta t)\right]^2+
\left[s_2\left(\frac{a^2}{t}-(1-\beta t)\right)-\frac{a}{t}\right]^2,
\end{split}
\end{equation}
which has an infimum controlled by $\sigma$  by a similar argument to  Proposition~\ref{control1}. We now estimate \eqref{simi-cal}. When $\beta>1$, we have that $1-\beta t<0$. Thus, $ |N_{\varepsilon}(a,t)|^2\geq s_1^2\left[\frac{a^2}{t}-(1-\beta t)\right]^2\geq (\beta -1)^2s_1^2:=s_1^2C_{\beta}$ for a positive constant $C_{\beta
}$ depending on $\beta$. In the following part, to simplify the notation, we denote $C_{\beta}$ by all  constants depending on $\beta$.

We  focus mainly on the case of $0<\beta<1$. We divide $t\in \mathbb{Z}_{+}$ into two regions as the following:
\medskip

$\mathbf{Case~1}.$
When $t\geq[\frac{1}{\beta}]+1$, we have that $1-\beta t<0$. Therefore 
\begin{equation*}
 |N_{\varepsilon}(a,t)|^2\geq s_1^2\left[\frac{a^2}{t}-(1-\beta t)\right]^2\geq s_1^2C_{\beta
 }.
\end{equation*}

$\mathbf{Case~2}$. When $1\leq t\leq [\frac{1}{\beta}]$, we get that  $t(1-\beta t)\in [C_{\beta
}^{1},\,C_{\beta
}^{2}]$ with $C_{\beta
}^{2}\geq C_{\beta
}^{1}>0$. It is clear that 
$\frac{a^2}{t}-(1-\beta t)=0$ holds at $a^2=t(1-\beta t)\in [C_{\beta
}^{1},\,C_{\beta
}^{2}]$, namely, $a\in[-\sqrt{C_{\beta
}^{2}},-\sqrt{C_{\beta
}^{1}}]\cup[\sqrt{C_{\beta
}^{1}},\sqrt{C_{\beta
}^{2}}]$. 
Now, we define two regions in $a\in \mathbb{R}$, by choosing a constant $0<\delta\ll 1$, as follows
\begin{equation*}
 \begin{split}
  I_1=[(-1-\delta)\sqrt{C_{\beta
  	}^{2}},(-1+\delta)\sqrt{C_{\beta
  	}^{1}}]\cup [(1-\delta)\sqrt{C_{\beta
  }^{1}},(1+\delta)\sqrt{C_{\beta
}^{2}}],\,\,I_2=\mathbb{R}\setminus I_1.
 \end{split}
\end{equation*}
The case of  $a\in I_2$ yields that 
\begin{equation*}
 |N_{\varepsilon}(a,t)|^2\geq s_1^2\left[\frac{a^2}{t}-(1-\beta t)\right]^2\geq s_1^2C_{\beta}.
\end{equation*}
If $a\in I_1$, $\frac{a^2}{t}-(1-\beta t)$ can be bounded so that we can bound the second term in $ |N_{\varepsilon}(a,t)|^2$, that is 
\begin{equation*}
 \begin{split}
 |N_{\varepsilon}(a,t)|^2&\geq
\left[s_2(\frac{a^2}{t}-(1-\beta t))-\frac{a}{t}\right]^2\\
&=\left[O(s_2)-\frac{a}{t}\right]^2\geq s_1^2C_{\beta}
\end{split}
\end{equation*}
whenever $|\varepsilon|$ is sufficiently small. The above estimates for $|N_{\varepsilon}(a,t)|$ give that
\begin{equation*}
 |N_{\varepsilon}(a,t)|\geq  s_1C_{\beta}.
\end{equation*}
Therefore,
\begin{equation}\label{n-value}
\inf_{a\in \mathbb{R},\,t\in \mathbb{Z}\setminus\{0\}}|N_{\varepsilon}(a,t)|\geq s_1C_{\beta}\geq \sigma C_{\beta,\mu}
\end{equation}
for a positive constant $C_{\beta,\mu}$ depending on $\beta$ and $\mu$, by the domain of $\varepsilon\in \Omega(\sigma,\mu)$. Consequently, for $\widetilde{\mathbf{N}}_{\varepsilon}(k,j)$ defined in \eqref{t-es}, we obtain
\begin{equation}\label{p-bound}
\sup_{k\in \mathbb{Z}^d,\,j\in \mathbb{Z}\setminus\{0\}}|\widetilde{\mathbf{N}}_{\varepsilon}(k,j)|\leq \sup_{a\in \mathbb{R},\,t\in \mathbb{Z}_{+}} |\widetilde{\mathbf{N}}_{\varepsilon}(a,t)|\leq \sigma^{-1}C_{\beta,\mu}.
\end{equation}
It follows  that
\begin{equation*}
\begin{split}
\|\mathcal{N}_{\varepsilon}^{-1}(V)\|_{\rho,m}\leq   \sigma^{-1}C_{\beta,\mu}\|V\|_{\rho,m-2}.
\end{split}
\end{equation*}
This allows us to define
\begin{equation*}
\begin{split}
\left\|\mathcal{N}^{-1}_{\varepsilon} \right\|_{\mathcal{H}^{m-2}\rightarrow \mathcal{H}^{m}}=\sup_{k\in \mathbb{Z}^d,j\in \mathbb{Z}\setminus\{0\}}|\widetilde{\mathbf{N}}_{\varepsilon}(k,j)|.
\end{split}
\end{equation*}
That means $\varepsilon\mathcal{N}^{-1}_{\varepsilon}$ can be bounded from $\mathcal{H}^{m-2}$ to $ \mathcal{H}^{m}$.
\end{proof}

As a matter of fact,  Lemma~\ref{derivative-non} and Lemma~\ref{ppo} give that the operator $\mathcal{T}$ defined in \eqref{plo1} is analytic from the space $\mathcal{H}^{\rho,m}$ to itself.
\begin{remark}
Note that the previous Lemma~\ref{ppo} includes the case of  $\varepsilon\in \mathbb{R}$, which will be used in the later finitely differentiable case (see Lemma~\ref{ppo1}).

Note also that for the  equation \eqref{b-e1}, the nonlinearity will always be regular. Therefore, we just consider the finitely differentiable version with $m>\frac{d+1}{2}$. The analogue of the low regularity results for ODE case would be easier to consider.
\end{remark}
\subsection{Proof of Theorem~\ref{main-p}}\label{sec:mainpr}
In this section, we give the proof of Theorem~\ref{main-p}.
\subsubsection{Regularity in $\varepsilon$}\label{sec:regupde}
Since we want to obtain solutions depending analytically on $\varepsilon$, 
proceeding as in Section~\ref{sec:analyticitysolution}, we consider 
$\mathcal{T}:=\mathcal{T}_{\varepsilon}$  defined in \eqref{plo1} acting on the space $\mathcal{H}^{\rho,m,\Omega}$ consisting of analytic functions of 
$\varepsilon$ taking values in $\mathcal{H}^{\rho,m}$ with $\varepsilon$ ranging over the domain  $\Omega(\sigma, \mu)$. We endow $\mathcal{H}^{\rho,m,\Omega}$ with 
supremum norm
\begin{equation*}
\|U\|_{\rho,m,\Omega}=\sup_{\varepsilon \in \Omega(\sigma, \mu)}\|U_{\varepsilon}\|_{\rho,m},
\end{equation*}
which makes $\mathcal{H}^{\rho,m,\Omega}$ a Banach space. Moreover, it is also a Banach algebra when $m>(d+1)$.
Based on Lemma~\ref{ppo}, we show that the operator $\mathcal{T}$ maps the space $\mathcal{H}^{\rho,m,\Omega}$ into itself. The idea of the proof is similar to Lemma~\ref{diff1}, but the details are different since PDE model \eqref{b-e1} involves a space variable $x$.
\begin{proposition}\label{diff2}
 If $m>(d+5)$, then the operator $\mathcal{T}$ defined in \eqref{plo1} maps the analytic Banach space  $ \mathcal{H}^{\rho,m,\Omega}$ into itself. Precisely, if the mapping $\varepsilon \rightarrow U_{\varepsilon}: \Omega\rightarrow \mathcal{H}^{\rho,m}$ is
  complex differentiable, then,  $\varepsilon\rightarrow \mathcal{T}_{\varepsilon}
 (U_{\varepsilon}): \Omega\rightarrow \mathcal{H}^{\rho,m}$ 
 is also  complex differentiable.  
\end{proposition}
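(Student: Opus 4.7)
My plan is to mirror the proof of Lemma~\ref{diff1} in the PDE setting, decomposing $\mathcal{T}$ into a linear multiplier part and a nonlinearity that does not depend on $\varepsilon$ explicitly. Write
\[
\mathcal{T}_\varepsilon(U_\varepsilon) = \varepsilon\mathcal{N}_\varepsilon^{-1}\bigl[h(U_\varepsilon) + f\bigr], \qquad h(U)=(U^2)_{xx}.
\]
By Lemma~\ref{derivative-non}, the map $h$ is analytic from $\mathcal{H}^{\rho,m}$ to $\mathcal{H}^{\rho,m-2}$ and is independent of $\varepsilon$, so if $\varepsilon\mapsto U_\varepsilon$ is complex differentiable from $\Omega$ into $\mathcal{H}^{\rho,m}$, then $\varepsilon\mapsto h(U_\varepsilon)+f$ is complex differentiable from $\Omega$ into $\mathcal{H}^{\rho,m-2}$, with derivative $Dh(U_\varepsilon)\cdot\tfrac{d U_\varepsilon}{d\varepsilon}$. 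It is therefore enough to show: whenever $\varepsilon\mapsto V_\varepsilon$ is complex differentiable from $\Omega$ into $\mathcal{H}^{\rho,m-2}$, the map $\varepsilon\mapsto \varepsilon\mathcal{N}_\varepsilon^{-1}(V_\varepsilon)$ is complex differentiable from $\Omega$ into $\mathcal{H}^{\rho,m}$.

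For this last step I would work Fourier coefficient by Fourier coefficient, prove differentiability first into a slightly weaker space $\mathcal{H}^{\rho,m-\tau}$ using the Weierstrass $M$-test, and then invoke Lemma~\ref{bootstrap} (together with Lemma~\ref{ppo}, which puts the image into $\mathcal{H}^{\rho,m}$) to upgrade differentiability back into the target space $\mathcal{H}^{\rho,m}$. Writing $D(k,j)=-\varepsilon(k\cdot\omega)^2+\mathrm{i}(k\cdot\omega)-\varepsilon(\beta j^4-j^2)$, the key calculation is the cancellation
\[
\frac{d}{d\varepsilon}\Bigl(\frac{\varepsilon}{D(k,j)}\Bigr) = \frac{\mathrm{i}(k\cdot\omega)}{D(k,j)^2},
\]
which removes the apparently dangerous $\beta j^4$ growth from the numerator. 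Combined with the lower bound $|D(k,j)|\geq \sigma C_{\beta,\mu}^{-1}(|k|^2+j^2)$ extracted from the infimum estimate \eqref{n-value}, this reduces the formal term-by-term derivative
\[
\sum_{k\in\mathbb{Z}^d,\,j\in\mathbb{Z}\setminus\{0\}}\Bigl[\frac{\mathrm{i}(k\cdot\omega)}{D(k,j)^2}\,\widehat{V}_{k,j,\varepsilon} + \frac{\varepsilon}{D(k,j)}\,\frac{d\widehat{V}_{k,j,\varepsilon}}{d\varepsilon}\Bigr] e^{\mathrm{i}(k\cdot\theta + jx)}
\]
to two pieces whose $(k,j)$-summands, weighted for the $\mathcal{H}^{\rho,m-\tau}$-norm, are dominated by $C(\|V_\varepsilon\|_{\rho,m-2}+\|\tfrac{dV_\varepsilon}{d\varepsilon}\|_{\rho,m-2})(|k|^2+j^2+1)^{-\tau/2}$ uniformly in $\varepsilon\in\Omega$. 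Since the Fourier sum runs over the $(d{+}1)$-dimensional lattice $\mathbb{Z}^d\times\mathbb{Z}$, this majorant is summable whenever $\tau$ exceeds $d+1$, and the exponential factors $e^{\pm\rho(|k|+|j|)}$ cancel exactly as in Lemma~\ref{diff1}.

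I expect the main obstacle to be the term-by-term majorization: unlike the ODE case, the denominator $|D(k,j)|$ has genuinely different asymptotics in the several regimes of $(k,j)$ (one of $(k\cdot\omega)^2$, $\beta j^4-j^2$, or $k\cdot\omega$ can dominate), so the lower bound on $|D|$ must be invoked only after the region decomposition already used in the proof of Lemma~\ref{ppo}. The cancellation identity above is what makes the argument work at all; without it, the naive triangle-inequality bound on the derivative of $\varepsilon/D$ produces a contribution of size $|\varepsilon A|/|D|^2$ with $A=(k\cdot\omega)^2+\beta j^4-j^2$, which does \emph{not} decay as $|j|\to\infty$ when $k=0$, and would overwhelm the two-derivative smoothing supplied by $\varepsilon\mathcal{N}_\varepsilon^{-1}$. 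The margin provided by the hypothesis $m>d+5$ leaves room to pick $\tau$ satisfying $d+1<\tau\leq m$ while still keeping $m-\tau$ large enough for the intermediate space to be a Banach algebra, so that the subsequent bootstrap step and the composition with $h$ both go through in the smaller-regularity space before being transferred back to $\mathcal{H}^{\rho,m}$.
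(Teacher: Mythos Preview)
Your proposal is correct and follows the same architecture as the paper's proof: split $\mathcal{T}_\varepsilon$ into the $\varepsilon$-independent nonlinearity $h(U)=(U^2)_{xx}$ (handled by Lemma~\ref{derivative-non}) and the multiplier $\varepsilon\mathcal{N}_\varepsilon^{-1}$, establish term-by-term differentiability into a weaker space $\mathcal{H}^{\rho,m-\tau}$ via the Weierstrass $M$-test, and then apply Lemma~\ref{bootstrap} together with Lemma~\ref{ppo} to upgrade back to $\mathcal{H}^{\rho,m}$.

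The one substantive difference is your use of the algebraic identity $\tfrac{d}{d\varepsilon}(\varepsilon/D)=\mathrm{i}(k\cdot\omega)/D^2$. The paper does \emph{not} exploit this cancellation; it simply writes $\tfrac{d}{d\varepsilon}(\varepsilon \mathbf{N}_\varepsilon^{-1}\widehat V)=\mathbf{N}_\varepsilon^{-1}\widehat V+\varepsilon\tfrac{d\mathbf{N}_\varepsilon^{-1}}{d\varepsilon}\widehat V+\varepsilon\mathbf{N}_\varepsilon^{-1}\tfrac{d\widehat V}{d\varepsilon}$ and bounds the three pieces by the crude majorant $C_{\beta,\mu}\sigma^{-1}|j|^2(|\widehat V|+|\tfrac{d\widehat V}{d\varepsilon}|)$, which forces $\tau>d+5$ and hence the hypothesis $m>d+5$ in the statement. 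Your cancellation removes the $\beta j^4$ contribution from the numerator and, combined with $|D|\ge \sigma C^{-1}(|k|^2+j^2)$ from \eqref{p-bound}, yields the sharper majorant $(|k|^2+j^2+1)^{-\tau/2}$, so $\tau>d+1$ already suffices. In other words your argument actually proves more than the proposition claims.

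Two small clean-ups. First, your closing remark that $m-\tau$ must stay large enough for $\mathcal{H}^{\rho,m-\tau}$ to be a Banach algebra is unnecessary: the bootstrap Lemma~\ref{bootstrap} only needs the continuous inclusion $\mathcal{H}^{\rho,m}\hookrightarrow\mathcal{H}^{\rho,m-\tau}$, and the composition with $h$ is carried out in $\mathcal{H}^{\rho,m}$, not in the intermediate space. Second, the assertion that without the cancellation the naive bound would ``overwhelm the two-derivative smoothing'' is slightly overstated: using only \eqref{p-bound} one gets $|\varepsilon A/D^2|\le C\sigma^{-1}$, which is bounded but not decaying, so the argument would still close with $\tau>d+3$; the paper's wasteful $|j|^2$ factor is what pushes the threshold up to $d+5$.
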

\begin{proof}
From the fixed point equation \eqref{plo1}, we know that $\mathcal{T}_{\varepsilon}$  is composed by $\varepsilon\mathcal{N}^{-1}_{\varepsilon}$ and $h$ defined in Lemma~\ref{derivative-non}. Lemma~\ref{derivative-non} gives that $h(\mathcal{H}^{\rho,m,\Omega}) \subset \mathcal{H}^{\rho,m-2,\Omega}$. Hence, it suffices to verify that  $\varepsilon\mathcal{N}^{-1}_{\varepsilon}(\mathcal{H}^{\rho,m-2,\Omega})\subset \mathcal{H}^{\rho,m,\Omega}$. In the following step, we use a similar method as that in the proof of  Proposition~\ref{diff1}.

 For a fixed $\varepsilon\in \Omega$, we expand $V_{\varepsilon}(\theta,x)\in \mathcal{H}^{\rho,m-2}$ as 
\begin{equation*}
V_{\varepsilon}(\theta,x) =\sum_{k\in\mathbb{Z}^{d},\,j\in\mathbb{Z}\setminus \{0\}}\widehat{V}_{k,j,\varepsilon}e^{\mathrm{i} (k\cdot\theta+j\cdot x)}
\end{equation*}
with 
\begin{equation}\label{coeff-es2}
\begin{split}
\left|\widehat{V}_{k,j,\varepsilon}\right|\leq \left\|V_{\varepsilon}\right\|_{\rho,m-2}e^{-\rho (|k|+|j|)}(|k|^2+|j|^2+1)^{-\frac{m-2}{2}}
\end{split}
\end{equation}
and 
\begin{equation}\label{coeff-es22}
\begin{split}
\left|\frac{d}{d\varepsilon}\widehat{V}_{k,j,\varepsilon}\right|\leq \left\|\frac{d}{d\varepsilon}V_{\varepsilon}\right\|_{\rho,m-2}e^{-\rho (|k|+|j|)}(|k|^2+|j|^2+1)^{-\frac{m-2}{2}}.
\end{split}
\end{equation}
It follows from \eqref{plo} that
\begin{equation*}
\begin{split}
\varepsilon \mathcal{N}_{\varepsilon}^{-1}(V_{\varepsilon})=\sum_{k\in\mathbb{Z}^{d},\,j\in\mathbb{Z}\setminus \{0\}}
\varepsilon \mathbf{N}_{\varepsilon}^{-1}(k\cdot \omega,\,j)\widehat{V}_{k,j,\varepsilon}e^{\mathrm{i} (k\cdot\theta+j\cdot x)},
\end{split}
\end{equation*}
where
\begin{equation*}
\begin{split}
\mathbf{N}_{\varepsilon}^{-1}(k\cdot \omega,\,j)
=\frac{1}{-\varepsilon(k\cdot \omega)^2+\mathrm{i} (k\cdot \omega)
-\varepsilon(\beta j^4-j^2)}=:\mathbf{N}_{\varepsilon}^{-1}.
\end{split}
\end{equation*}
By \eqref{p-bound}, one has
\begin{equation*}
\begin{split}
&\left|\frac{d}{d\varepsilon}\left(\varepsilon \mathbf{N}_{\varepsilon}^{-1}\widehat{V}_{k,j,\varepsilon}\right)\right|\\
&\leq |\mathbf{N}_{\varepsilon}^{-1}|\left|\widehat{V}_{k,j,\varepsilon}\right|+\left|\varepsilon\frac{d}{d\varepsilon} \mathbf{N}_{\varepsilon}^{-1}\right|\left|\widehat{V}_{k,j,\varepsilon}\right|+ \left|\varepsilon \mathbf{N}_{\varepsilon}^{-1}\right|\left|\frac{d}{d\varepsilon}\widehat{V}_{k,j,\varepsilon}\right|\\
&\leq C_{\beta,\mu}\cdot \sigma^{-1}|j|^2\left(\left|\widehat{V}_{k,j,\varepsilon}\right|+\left|\frac{d}{d\varepsilon}\widehat{V}_{k,j,\varepsilon}\right|\right).
\end{split}
\end{equation*}
Together with  \eqref{coeff-es2} and \eqref{coeff-es22}, we get
\begin{equation*}
\begin{split}
&\left\|\frac{d}{d\varepsilon}\left(\varepsilon \mathbf{N}^{-1}_{\varepsilon}\widehat{V}_{k,j,\varepsilon}\right)e^{\mathrm{i} (k\cdot\theta+j\cdot x)}
\right\|_{\rho,\,m-\tau}\\
&\leq C_{\beta,\mu}\cdot\sigma^{-1}|j|^2\left(\left|\widehat{V}_{k,j,\varepsilon}\right|+\left|\frac{d}{d\varepsilon}\widehat{V}_{k,j,\varepsilon}\right|\right)
\|e^{\mathrm{i} (k\cdot\theta+j\cdot x)}\|_{\rho,\,m-\tau}\\
&\leq  C_{\beta,\mu}\cdot\sigma^{-1} |j|^2\left(\|V_{\varepsilon}\|_{\rho,m-2}+\left\|\frac{d}{d\varepsilon}V_{\varepsilon}\right\|_{\rho,m-2}
\right)e^{-\rho (|k|+|j|)}\\
&\ \ \ \ \ \ \ \ \  \ \cdot (|k|^2+|j|^2+1)^{-\frac{m-2}{2}} e^{\rho (|k|+|j|)}(|k|^2+|j|^2+1)^{\frac{m-\tau}{2}}\\
& \leq C_{\beta,\mu}\cdot \sigma^{-1}\left(\|V_{\varepsilon}\|_{\rho,m-2}+\left\|\frac{d}{d\varepsilon}V_{\varepsilon}\right\|_{\rho,m-2}
\right)(|k|^2+|j|^2+1)^{-(\frac{\tau}{2}-2)}.
\end{split}
\end{equation*}
By choosing $d+5<\tau\leq m$,  we obtain that
\begin{equation*}
\begin{split}
\sum_{k\in \mathbb{Z}^d,\,j\in\mathbb{Z}\setminus \{0\}}(|k|^2+|j|^2+1)^{-(\frac{\tau}{2}-2)}\leq C_d \sum_{\kappa=0}^{\infty}(\kappa^2+1)^{-\frac{\tau-d-4}{2}}
< \infty.
\end{split}
\end{equation*}
 As a consequence, by Weierstrass M-test, we conclude that the series
\begin{equation*}
\begin{split}
\sum_{k\in \mathbb{Z}^d,\,j\in\mathbb{Z}\setminus \{0\}} \frac{d}{d\varepsilon}\left(\varepsilon \mathbf{N}^{-1}_{\varepsilon}\widehat{V}_{k,j,\varepsilon}\right)e^{\mathrm{i} (k\cdot\theta+j\cdot x)}
\end{split}
\end{equation*}
converge uniformly on $\varepsilon\in \Omega$ in the space $\mathcal{H}^{\rho,m-\tau}$. Therefore,
\begin{equation*}
\begin{split}
\frac{d}{d\varepsilon}\left(\varepsilon \mathcal{N}_{\varepsilon}^{-1}(V_{\varepsilon})\right)=
\sum_{k\in \mathbb{Z}^d,\,j\in\mathbb{Z}\setminus \{0\}} \frac{d}{d\varepsilon}\left(\varepsilon \mathbf{N}^{-1}_{\varepsilon}\widehat{V}_{k,j,\varepsilon}\right)e^{\mathrm{i} (k\cdot\theta+j\cdot x)}.
\end{split}
\end{equation*}
In conclusion, we have that the map $\varepsilon \rightarrow \varepsilon \mathcal{N}_{\varepsilon}^{-1}(V_{\varepsilon}):\Omega\rightarrow \mathcal{H}^{\rho,m}$  is complex differentiable with derivatives in $\mathcal{H}^{\rho,m-\tau}$ by  $\mathcal{H}^{\rho,m} \subset \mathcal{H}^{\rho,m-\tau}$ and Lemma~\ref{bootstrap} in Appendix.
\end{proof}
\subsubsection{Proof of Theorem~\ref{main-p}}
\label{sec:proofmain-p}
 We now start to deal with the fixed point equation
\begin{equation*}
U(\theta,x)=\mathcal{N}_{\varepsilon}^{-1}\left[ \varepsilon (U^{2})_{xx}+\varepsilon f(\theta,x)\right] \equiv \mathcal{T}(U)(\theta,x).
\end{equation*}
in the space $\mathcal{H}^{\rho,m,\Omega}$. 
We will find a fixed point of $\mathcal{T}$ by considering a small ball $\mathbb{B}_{\mathbf{r}}(0)
\subset \mathcal{H}^{\rho,m,\Omega}$ 
with $C_{\beta,\mu}\cdot \mathbf{r}<\frac{1}{2}$
such that $\mathcal{T}(\mathbb{B}_{\mathbf{r}}(0))\subset \mathbb{B}_{\mathbf{r}}(0)$and $\mathcal{T}$ is a contraction in this ball.

It follows from Lemma~\ref{ppo}  that 
\begin{equation*}
\|\varepsilon \mathcal{N}_{\varepsilon}^{-1}\|_{\rho,m,\Omega}\leq C_{\beta,\mu}.
\end{equation*}
Hence, if $U\in \mathbb{B}_{\mathbf{r}}(0)$, Lemma~\ref{derivative-non} shows that
\begin{equation*}
\begin{split}
\|\mathcal{T}(U)\|_{\rho, m,\Omega}
&\leq\|\mathcal{T}(0)\|_{\rho, m,\Omega}+ \|\mathcal{T}(U)-
\mathcal{T}(0)\|_{\rho, m,\Omega}\\ 
&\leq\|\varepsilon\mathcal{N}_{\varepsilon}^{-1}\|_{\rho, m,\Omega}\left( \|f\|_{\rho, m,\Omega}+
\| Dh(V)U\|_{\rho, m-2,\Omega}\right) \\
&\leq C_{\beta,\mu}\left( \|f\|_{\rho, m,\Omega}+\|V\|_{\rho,m,\Omega}\|U\|_{\rho,m,\Omega}\right)\\
&\leq C_{\beta,\mu}\left( \|f\|_{\rho, m,\Omega}+\mathbf{r}^2\right)
\leq \mathbf{r},
\end{split}
\end{equation*}
provided that we impose the smallness condition for $f$ satisfying 
\begin{equation*}
\|f\|_{\rho, m,\Omega}\leq \frac{\mathbf{r}}{2C_{\beta,\mu}}.
\end{equation*}
Moreover, for $U_1,U_2\in \mathbb{B}_{\mathbf{r}}(0)$, we get that
\begin{equation*}
\begin{split}
\|\mathcal{T}(U_1)-\mathcal{T}(U_2)\|_{\rho, m,\Omega}
&=\|\varepsilon\mathcal{N}_{\varepsilon}^{-1} h(U_1)-\varepsilon\mathcal{N}_{\varepsilon}^{-1} h(U_2)\|_{\rho, m,\Omega}\\
&\leq C_{\beta,\mu}\cdot\mathbf{r}\|U_1-U_2\|_{\rho, m,\Omega}\\
&<\frac{1}{2}\|U_1-U_2\|_{\rho, m,\Omega},
\end{split}
\end{equation*}
which implies that $\mathcal{T}$ is a contraction in the ball $\mathbb{B}_{\mathbf{r}}(0)$.
 In conclusion, there is a unique  fixed point $U$ in the space $\mathcal{H}^{\rho,m,\Omega}$ for equation \eqref{plo1}. Namely, we obtain a  solution $U_{\varepsilon}$  analytic in $\varepsilon$ for equation  \eqref{ffixeq}.
 For $\varepsilon \rightarrow 0$,  we have $\varepsilon\,\rightarrow U_{\varepsilon}$ is continuous, whose  proof is similar to Lemma~\ref{continuous}.
\subsection{Proof of Theorem~\ref{pde-fi}}\label{sec:pde-finitely}
In this section, we consider 
$\mathcal{T}:=\mathcal{T}_{\varepsilon}$  defined in \eqref{plo1} acting on space $\mathcal{H}^{m,\widetilde{\Omega}}$ consisting of differentiable functions of 
$\varepsilon$ taking values in $\mathcal{H}^{m}$ with $\varepsilon$ ranging over the domain  $\widetilde{\Omega}(\sigma, \mu)$. We endow $\mathcal{H}^{m,\widetilde{\Omega}}$ with 
supremum norm
\begin{equation}\label{suprenorm-pde2}
\|U\|_{m,\Omega}=\sup_{\varepsilon \in \widetilde{\Omega}(\sigma, \mu)}\|U_{\varepsilon}\|_{m}.
\end{equation}
We only have the result that the space $\mathcal{H}^{m}$ is a Banach space and it is also a Banach algebra when $m>\frac{d+1}{2}$ but not the space $\mathcal{H}^{m,\widetilde{\Omega}}$ with the supremum norm with respect to $\varepsilon$ defined in \eqref{suprenorm-pde2}.  Consequently, the contraction mapping principle is not enough to get the solution $U_{\varepsilon}$ with optimal regularity in $\varepsilon$. We will combine with the implicit function theorem to obtain the regular solutions.

In order to use the implicit function theorem, analogous to Section ~\ref{sec:finitely},
the main issue is to study the differentiability
of the operator $\mathcal{T}(\varepsilon,U)$ in \eqref{plo1}
considered as an operator from 
$\widetilde{\Omega}\times \mathcal{H}^m$ to $\mathcal{H}^{m}$
as well as the invertibility of 
$D_2\mathcal{T}(\varepsilon, U)$.

  We first present the result with respect to the argument $U$. Since Lemma~\ref{derivative-non} and Lemma~\ref{ppo} also hold in the finitely differentiable setting,  we have the following result when we work in the space $\mathcal{H}^{m}$.
\begin{lemma}\label{ppo1}
	For a fixed $\varepsilon \in \widetilde{\Omega}(\sigma,\mu)$, the operator $\mathcal{T}_{\varepsilon}$ is analytic from the space $\mathcal{H}^{m}$ to itself.  
\end{lemma}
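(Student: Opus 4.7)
The plan is to reduce the statement to the two structural facts already established for the analytic scale, namely Lemma~\ref{derivative-non} (analyticity and boundedness of the nonlinear map $h(U)=(U^2)_{xx}$) and Lemma~\ref{ppo} (boundedness of $\varepsilon\mathcal{N}_{\varepsilon}^{-1}$ gaining two derivatives), and observe that both statements carry over verbatim to the Sobolev scale $\mathcal{H}^{m}$ under the hypothesis $m>\frac{d+1}{2}$. The author has already flagged in the remark following Lemma~\ref{ppo} that the estimate for $\varepsilon\mathcal{N}_{\varepsilon}^{-1}$ applies to real $\varepsilon\in\widetilde{\Omega}$, so the only thing to check in that direction is that the multiplier bound \eqref{p-bound} remains a bound for the operator norm $\mathcal{H}^{m-2}\to\mathcal{H}^{m}$; this follows because, exactly as in Lemma~\ref{ppo}, the Sobolev norm on $\mathcal{H}^{m}$ is read off from the Fourier coefficients and the gain factor $(|k|^2+|j|^2)$ is absorbed into the multiplier $\widetilde{\mathbf{N}}_{\varepsilon}(k,j)$.

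For the nonlinearity, first I would note that $\mathcal{H}^{m}$ is a Banach algebra under pointwise multiplication whenever $m>\frac{d+1}{2}$ (the PDE analogue of Lemma~\ref{alge}); therefore the map $U\mapsto U^{2}$ is a continuous homogeneous polynomial of degree two from $\mathcal{H}^{m}$ to $\mathcal{H}^{m}$, which is a bounded multilinear expression and hence (real) analytic. Composing with the bounded linear operator $\partial_x^2:\mathcal{H}^{m}\to\mathcal{H}^{m-2}$ shows that $h(U)=(U^2)_{xx}$ is analytic from $\mathcal{H}^{m}$ to $\mathcal{H}^{m-2}$, with the explicit derivative $Dh(U)V=2(UV)_{xx}$ inherited from Lemma~\ref{derivative-non}. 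Adding the constant $f\in\mathcal{H}^{m}\subset\mathcal{H}^{m-2}$ preserves analyticity, so $U\mapsto h(U)+f$ is analytic $\mathcal{H}^{m}\to\mathcal{H}^{m-2}$.

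Finally, I would conclude by writing
\begin{equation*}
\mathcal{T}_{\varepsilon}(U) \;=\; \bigl(\varepsilon\mathcal{N}_{\varepsilon}^{-1}\bigr)\circ\bigl(h(\cdot)+f\bigr)(U),
\end{equation*}
a composition of the (real) analytic quadratic map $\mathcal{H}^{m}\to\mathcal{H}^{m-2}$ with a bounded linear operator $\mathcal{H}^{m-2}\to\mathcal{H}^{m}$. Since bounded linear maps are trivially analytic and the composition of analytic maps between Banach spaces is analytic, $\mathcal{T}_{\varepsilon}:\mathcal{H}^{m}\to\mathcal{H}^{m}$ is analytic for each fixed $\varepsilon\in\widetilde{\Omega}(\sigma,\mu)$.

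There is really no hard step here: the content of the lemma is purely structural once Lemma~\ref{derivative-non} and Lemma~\ref{ppo} are in hand. The only point that requires any care is making explicit that the multiplier computation of Lemma~\ref{ppo}, which was carried out in the weighted analytic norm $\|\cdot\|_{\rho,m}$, produces an identical bound in the unweighted Sobolev norm (taking $\rho=0$), because neither the lower bound \eqref{n-value} on $|N_{\varepsilon}(a,t)|$ nor the identification of $\|\mathcal{N}^{-1}_{\varepsilon}\|_{\mathcal{H}^{m-2}\to\mathcal{H}^{m}}$ with $\sup_{k,j}|\widetilde{\mathbf{N}}_{\varepsilon}(k,j)|$ depends on the weight $e^{\rho(|k|+|j|)}$.
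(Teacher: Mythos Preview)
Your proposal is correct and follows essentially the same approach as the paper, which simply notes that Lemma~\ref{derivative-non} and Lemma~\ref{ppo} also hold in the finitely differentiable setting $\mathcal{H}^{m}$ (with $\rho=0$). You have just spelled out in more detail what the paper states in a single sentence: the nonlinearity $h(U)=(U^{2})_{xx}$ is analytic $\mathcal{H}^{m}\to\mathcal{H}^{m-2}$ by the Banach algebra property for $m>\frac{d+1}{2}$, and the linear smoothing operator $\varepsilon\mathcal{N}_{\varepsilon}^{-1}$ is bounded $\mathcal{H}^{m-2}\to\mathcal{H}^{m}$ because the multiplier bound \eqref{p-bound} is independent of the exponential weight.
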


Now, we give the following proposition with the result that the operator $\mathcal{T}$ in \eqref{plo1} is differentiable in the argument $\varepsilon$. Note that $\mathcal{T}$ is composed by $\varepsilon\mathcal{N}^{-1}_{\varepsilon}$ and $h$ defined in \eqref{non-term}. Since $h(\mathcal{H}^{m}) \subset \mathcal{H}^{m-2}$, we  need to  verify that the derivatives of  $\varepsilon\mathcal{N}^{-1}_{\varepsilon}$ with respect to $\varepsilon$ is bounded from the space $\mathcal{H}^{m-2}$ to the space $\mathcal{H}^{m}$.  Similar to Proposition~\ref{derivatives}, we have:
\begin{proposition}\label{derivatives1}
	Fix any $m \in \mathbb{N}$ with $m>\frac{d+1}{2}$ and $\sigma>0$. 
	We consider the map that to every $\varepsilon \in \widetilde{\Omega}$,
	$\varepsilon \mathcal{N}^{-1}_\varepsilon \in B(\mathcal{H}^{m-2}, \mathcal{H}^m)$. Moreover,
	for any $l \in \mathbb{N}$ and $\varepsilon \in \widetilde{\Omega}$, the map  $\varepsilon \rightarrow 
	\varepsilon \mathcal{N}^{-1}_\varepsilon$ is $C^l$ considered as
	a mapping from $\widetilde{\Omega}$ to $B(\mathcal{H}^{m-2}, \mathcal{H}^m)$.  Namely,
	$\frac{d^l}{d \varepsilon^l} (\varepsilon \mathcal{N}_\varepsilon^{-1})\in B(\mathcal{H}^{m-2}, \mathcal{H}^m)$.

	As a matter of fact, something stronger is true. The mapping 
	$\varepsilon \rightarrow \varepsilon  \mathcal{N}^{-1}_\varepsilon$ is 
	real analytic for $\varepsilon \in \widetilde{\Omega}$ and the radius of
	analyticity can be bounded uniformly for all $\varepsilon \in \widetilde{\Omega}$. 
\end{proposition}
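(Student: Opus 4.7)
The plan is to mirror the proof of Proposition~\ref{derivatives} from the ODE setting, replacing the ODE Fourier multipliers by the PDE ones used in Lemma~\ref{ppo}, and carefully accounting for the smoothing gain from $\mathcal{H}^{m-2}$ to $\mathcal{H}^m$ supplied by $\mathcal{N}^{-1}_\varepsilon$. The starting observation, analogous to \eqref{fou-coe}, is that $\varepsilon\mathcal{N}^{-1}_\varepsilon$ acts as a diagonal Fourier multiplier with symbol $\varepsilon \mathbf{N}^{-1}_\varepsilon(k,j)$, and its $B(\mathcal{H}^{m-2},\mathcal{H}^m)$-norm is exactly the weighted supremum $\sup_{k,j}|\varepsilon\mathbf{N}^{-1}_\varepsilon(k,j)|(|k|^2+|j|^2+1)$. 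This is the quantity $\sup|\widetilde{\mathbf{N}}_\varepsilon(k,j)|$ already estimated by $\sigma^{-1}C_{\beta,\mu}$ in \eqref{p-bound}, which gives the base bound $\|\varepsilon\mathcal{N}^{-1}_\varepsilon\|_{B(\mathcal{H}^{m-2},\mathcal{H}^m)} \leq C_{\beta,\mu}$ and the fact that $\varepsilon\mathcal{N}^{-1}_\varepsilon \in B(\mathcal{H}^{m-2},\mathcal{H}^m)$.

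Next I would expand the symbol in powers of $\delta$, in the spirit of \eqref{goodformu}. Writing $a = k\cdot\omega$, $b = \beta j^4 - j^2$, one has
\[
\mathbf{N}^{-1}_{\varepsilon+\delta}(k,j) = \mathbf{N}^{-1}_\varepsilon(k,j)\bigl(1 + \delta R_\varepsilon(k,j)\bigr)^{-1}, \qquad R_\varepsilon(k,j) := -\frac{a^2 + b}{\mathbf{N}_\varepsilon(k,j)}.
\]
For real $\varepsilon\in\widetilde{\Omega}$ the elementary inequality $|\mathbf{N}_\varepsilon(k,j)|^2 = \varepsilon^2(a^2+b)^2 + a^2 \geq \varepsilon^2(a^2+b)^2$ immediately yields the uniform bound $|R_\varepsilon(k,j)| \leq 1/|\varepsilon| \leq 1/\sigma$, valid on all $(k,j)\in\mathbb{Z}^d\times(\mathbb{Z}\setminus\{0\})$ and all $\varepsilon\in\widetilde{\Omega}$ (note that $\mathbf{N}_\varepsilon(k,j)$ does not vanish thanks to the non-resonance condition \eqref{non-res} together with $\beta\in\mathcal{O}$). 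Consequently, the geometric series $(1+\delta R_\varepsilon(k,j))^{-1} = \sum_{n\geq 0}(-\delta R_\varepsilon(k,j))^n$ converges for $|\delta| < \sigma/2$ with symbolwise bounds independent of $(k,j)$ and of the base point $\varepsilon\in\widetilde{\Omega}$. Multiplying by $(\varepsilon+\delta)$ and regrouping gives a formal expansion $(\varepsilon+\delta)\mathbf{N}^{-1}_{\varepsilon+\delta}(k,j) = \sum_{n\geq 0}\delta^n \mathbf{P}_{\varepsilon,n}(k,j)$ whose coefficients, by the same argument that produced \eqref{p-bound}, satisfy
\[
\sup_{k,j}|\mathbf{P}_{\varepsilon,n}(k,j)|\bigl(|k|^2 + |j|^2 + 1\bigr) \leq C_{\beta,\mu}\,\sigma^{-n}.
\]

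The main step, and the one I expect to be the principal subtlety, is to upgrade this symbolwise convergence to convergence in the operator norm of $B(\mathcal{H}^{m-2},\mathcal{H}^m)$. This upgrade is legitimate precisely because the $B(\mathcal{H}^{m-2},\mathcal{H}^m)$-norm of a diagonal Fourier multiplier with symbol $M_{k,j}$ is literally equal to the weighted supremum $\sup_{k,j}|M_{k,j}|(|k|^2+|j|^2+1)$, so the symbol bounds above translate verbatim into operator bounds of size $C_{\beta,\mu}\sigma^{-n}$ on the $\delta^n$-coefficient of the series. This produces a norm-convergent power series representation of $(\varepsilon+\delta)\mathcal{N}^{-1}_{\varepsilon+\delta}$ on $|\delta| < \sigma/2$, proving real analyticity of $\varepsilon\mapsto\varepsilon\mathcal{N}^{-1}_\varepsilon$ with radius of analyticity at least a fixed constant multiple of $\sigma$ uniformly in the base point in $\widetilde{\Omega}$. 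The $C^l$-regularity claim and the inclusion $\frac{d^l}{d\varepsilon^l}(\varepsilon\mathcal{N}^{-1}_\varepsilon) \in B(\mathcal{H}^{m-2},\mathcal{H}^m)$ for every $l\in\mathbb{N}$ then follow automatically by reading off the $n=l$ term of the expansion and invoking $l!$.
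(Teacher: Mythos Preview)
Your proposal is correct and follows essentially the same route as the paper: both factor the perturbed symbol as $\mathbf{N}^{-1}_{\varepsilon+\delta}=\mathbf{N}^{-1}_\varepsilon(1+\delta R_\varepsilon)^{-1}$, bound $R_\varepsilon$ uniformly in $(k,j)$ and $\varepsilon\in\widetilde{\Omega}$, expand in a geometric series, and then pass from symbol bounds to $B(\mathcal{H}^{m-2},\mathcal{H}^m)$-norms using that the operator norm of a diagonal multiplier equals the weighted supremum of its symbol. Your argument is in fact a bit more streamlined: for real $\varepsilon$ you note directly that $|\mathbf{N}_\varepsilon|^2=\varepsilon^2(a^2+b)^2+a^2\geq\varepsilon^2(a^2+b)^2$, which gives $|R_\varepsilon|\leq 1/\sigma$ in one line, whereas the paper works with the rescaled quantity $N_\varepsilon(a,t)$ from Lemma~\ref{ppo} and appeals to the case analysis there.
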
 

\begin{proof}  
The idea of the proof is similar to Proposition~\ref{derivatives}. Based on the estimates
	$|N_\varepsilon(a,t)| \geq \sigma C_{\beta,\mu}$ in \eqref{n-value} in Lemma~\ref{ppo}, 
	we now expand $N_{\varepsilon+\delta}^{-1}(a,t)$ 
 in powers of  $\delta$ as
	\begin{equation}\label{goodformu1}
	\begin{split} 
N^{-1}_{\varepsilon + \delta}(a,t)  &=\left(-(\varepsilon+\delta)\left[\frac{a^2}{t}-(1-\beta t)\right]+\mathrm{i}\frac{a}{t}\right)^{-1}\\
&= 
	\left( -\varepsilon\left[\frac{a^2}{t}-(1-\beta t)\right]+\mathrm{i}\frac{a}{t}-\delta \left[\frac{a^2}{t}-(1-\beta t)\right] \right)^{-1} \\
	&=\left(-\varepsilon\left[\frac{a^2}{t}-(1-\beta t)\right]+\mathrm{i}\frac{a}{t} \right)^{-1}\left(1-\delta\frac{\left[\frac{a^2}{t}-(1-\beta t)\right]}{-\varepsilon\left[\frac{a^2}{t}-(1-\beta t)\right]+\mathrm{i}\frac{a}{t}}\right)^{-1}. 
	\end{split} 
	\end{equation}
	
By the estimates in Lemma~\ref{ppo}, we observe that the factor 
	$ \frac{\left[\frac{a^2}{t}-(1-\beta t)\right]}{-\varepsilon\left[\frac{a^2}{t}-(1-\beta t)\right]+\mathrm{i}\frac{a}{t}}$
	is bounded uniformly in $a\in \mathbb{R},t\in \mathbb{Z}_{+}$ and 
	$\varepsilon\in \widetilde{\Omega}$.
	
	Therefore, we can expand $\left(1-\delta\frac{\left[\frac{a^2}{t}-(1-\beta t)\right]}{-\varepsilon\left[\frac{a^2}{t}-(1-\beta t)\right]+\mathrm{i}\frac{a}{t}}\right)^{-1} $ in \eqref{goodformu1} in powers of $\delta$ using  the geometric 
	series formula and the radii of convergence are bounded uniformly and 
	the values of the function are also bounded in a ball which is 
	uniform in $a \in \mathbb{R}, t\in \mathbb{Z}_{+}$ and $\varepsilon \in \widetilde{\Omega}$. That means $N_{\varepsilon}^{-1}$ is uniformly analytic in $\varepsilon$ for each  $a \in \mathbb{R}, t\in \mathbb{Z}_{+}$.
	
	In the Fourier space, we know that $\mathcal{N}_{\varepsilon}^{-1}$ is multiplier operator with the multiplier $N_{\varepsilon,k,j}^{-1}$. Precisely, for $\widehat{f}_{k,j}$ being the Fourier coefficients of function $f$ in the space $\mathcal{H}^{m-2}$, the Fourier coefficients $\widehat{(\mathcal{N}^{-1}_{\varepsilon}f)}_{k,j}$ of function $(\mathcal{N}^{-1}_{\varepsilon}f)$ in the space $\mathcal{H}^{m}$ have the structure:
	\begin{equation*}
	\widehat{(\mathcal{N}^{-1}_{\varepsilon}f)}_{k,j}=N^{-1}_{\varepsilon,k,j}\widehat{f}_{k,j}.
	\end{equation*}
	 Hence, we get that $\mathcal{N}^{-1}_{\varepsilon}$ is  analytic in $\varepsilon$.
	 
	Moreover, we  can bound $\| \mathcal{N}_{\varepsilon}^{-1} \|_{\mathcal{H}^{m-2}\rightarrow \mathcal{H}^m}$ by the norm
		defined by
		\begin{equation}\label{sup11}
		\left\|\mathcal{N}^{-1}_{\varepsilon} \right\|_{H^{m-2}\rightarrow \mathcal{H}^m}=\sup_{k\in \mathbb{Z}^{d},\,j\in \mathbb{Z}\setminus\{0\}}\|N^{-1}_{\varepsilon,k,j}\|
		\end{equation}
since the uniform boundness of 
$N^{-1}_{\varepsilon,k,j}$ in $k\in \mathbb{Z}^{d},\,j\in \mathbb{Z}\setminus\{0\}$.  Therefore, when we write $\mathcal{N}^{-1}_{\varepsilon+\delta}=\sum_{n=0}^{\infty}\mathcal{N}^{-1}_{\varepsilon,n}\delta^n$, $\| \mathcal{N}_{\varepsilon,n}^{-1} \|_{\mathcal{H}^{m-2}\rightarrow \mathcal{H}^m}$ can be bounded  by the definition in  \eqref{sup11}. That means $\frac{d^l}{d \varepsilon^l} (\varepsilon \mathcal{N}_\varepsilon^{-1})\in B(\mathcal{H}^{m-2}), \mathcal{H}^{m}$ for every $\varepsilon\in \widetilde{\Omega}$.
\end{proof}

Now, we start to prove Theorem~\ref{pde-fi} by constructing a fixed point $U_{\varepsilon_0}$ for $\varepsilon_0\in \widetilde{\Omega}$ first and then using the implicit function theorem to obtain the optimal regularity of $U_{\varepsilon}$ in $\varepsilon$. It is similar to the proof in Section~\ref{sec:fini}. We omit some details here.
\begin{proof}
	 First, when we choose a small ball $\mathbb{B}_{\mathbf{r}}(0)
	 \subset \mathcal{H}^{m,\widetilde{\Omega}}$, the similar process to Section~\ref{sec:proofmain-p} allows us to obtain a fixed point  $U_{\varepsilon_0}\in\mathcal{H}^{m}$ for some $\varepsilon_0\in \widetilde{\Omega}$ by the contraction argument in this ball.

Then, according to  Lemma~\ref{ppo1} and Proposition~\ref{derivatives1}, we obtain that the operator $\mathcal{T}$ defined on $\widetilde{\Omega}\times \mathcal{H}^{m}$ is $C^l$
	in arguments  $\varepsilon$ and $U$. Namely,
	$\mathbf{T}(\varepsilon,U):=U-\mathcal{T}(\varepsilon,U)$ is $C^l$
	in $\widetilde{\Omega}\times \mathcal{H}^{m}$.
	Based on the first step, we have
	$\mathbf{T}(\varepsilon_{0},U_{\varepsilon_0})=0$. Moreover, $D_2\mathbf{T}(\varepsilon_{0},U_{\varepsilon_0})=Id-D_2\mathcal{T}(\varepsilon_{0},U_{\varepsilon_0})=Id-\varepsilon_0\mathcal{N}_{\varepsilon_0}^{-1}Dh(U_{\varepsilon_0})$ is invertible since $\varepsilon_0\mathcal{N}_{\varepsilon_0}^{-1}Dh(U_{\varepsilon_0})$ is sufficiently small in a small domain of the origin.
	Therefore, by the  implicit function theorem, there exist an open neighborhood included in $\widetilde{\Omega}\times \mathcal{H}^{m}$ of $(U_{\varepsilon_0},\varepsilon_0)$ and 
	a $C^l$ function $U_{\varepsilon}$ 
	satisfying $\mathbf{T}(\varepsilon,U_{\varepsilon})=0$ on this neighborhood.
	\end{proof}

\appendix

\section{Some properties in analytic and finitely differentiable Banach spaces}
\label{sec:appendix} 
\subsection{Analytic functions in Banach space}
\begin{definition}\label{analy-def}
 Let  $X,\,Y$ be complex Banach spaces and $O\subset X$ is open . We say that $f:\,O\rightarrow Y$ is analytic if it is differentiable at all points of 
$O$ and there exists a function $\gamma:=\gamma_{x}(\|z\|)$, with $\frac{\gamma_{x}(\|z\|)}{\|z\|}\rightarrow 0$ 
as $\|z\|\rightarrow 0$, such that
\begin{equation*}
 \|f(x+z)-f(x)-Df(x)\cdot z \| \leq \gamma_{x}(\|z\|)
\end{equation*}
for all $x\in O$ and $z\in X$ such that $(x+z)\in O$.
\end{definition}
Note that Definition~\ref{analy-def} is a rather weak version of differentiability, but it is enough for this paper. For more analyticity of nonlinear functions in Banach spaces, we refer to \cite{Hil57,Muji86}.

The main result of this appendix is the theory of complex analytic functions in Banach space, bootstraping the meaning of derivatives of analytic functions. The result could be deduced from stronger results in \cite{Hil57,ReedS72}, but we thought it would be useful to present a self-contained proof since this lemma could be useful in other applications.
\begin{lemma}\label{bootstrap}
Let $U \subseteq \mathbb{C}$ be open and $X,\,Y$ be complex Banach spaces, $X\subseteq Y$ with continuous embedding. Let $f: U\rightarrow X$, which is differentiable in $Y$ for all $x\in U$, and

\begin{equation}\label{ydiff}
\lim_{h\rightarrow 0}\left\| \frac{f(x+h)-f(x)}{h}-f'(x)\right\|_{Y}=0.
\end{equation}
Then, $f'(x)\in X$ and
\begin{equation}\label{xdiff}
\lim_{h\rightarrow 0}\left\| \frac{f(x+h)-f(x)}{h}-f'(x)\right\|_{X}=0.
\end{equation}
\end{lemma}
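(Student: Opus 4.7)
The strategy is to bootstrap the $Y$-holomorphy of $f$ to strong $X$-holomorphy via a vector-valued Cauchy integral argument. Since $f: U \to X$ is complex differentiable in $Y$ at every point of $U$, $f$ is $Y$-holomorphic on $U$, and the standard $Y$-valued Cauchy integral formula
\begin{equation*}
f'(x_0) = \frac{1}{2\pi\mathrm{i}}\oint_{|\zeta - x_0| = r} \frac{f(\zeta)}{(\zeta - x_0)^2}\, d\zeta
\end{equation*}
holds in $Y$ for any $x_0 \in U$ and sufficiently small $r > 0$. The right-hand side is defined as the $Y$-limit of Riemann sums, each of which already lies in $X$ because $f$ takes values in $X$.

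To promote this to an $X$-valued integral, I would establish that $f$ is $X$-continuous on the contour $|\zeta - x_0| = r$, so that the integrand $\zeta \mapsto f(\zeta)/(\zeta - x_0)^2$ is $X$-Riemann integrable. A Dunford-type weak-to-strong argument accomplishes this: for every $\ell \in Y^*$ the scalar function $\ell \circ f$ is holomorphic on $U$ since $\ell$ is continuous on $Y$ and $f$ is $Y$-differentiable; the restrictions $\{\ell|_X : \ell \in Y^*\}$ form a separating family in $X^*$ by Hahn--Banach applied in $Y$. Combining scalar boundedness of these $\ell \circ f$ on compact subsets with the uniform boundedness principle and the scalar Cauchy estimates then yields local $X$-boundedness and $X$-continuity of $f$. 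Consequently the contour integral converges in $X$, its value must agree with $f'(x_0)$ by uniqueness of limits in $Y$, and so $f'(x_0) \in X$.

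Finally, \eqref{xdiff} follows from the Cauchy identity
\begin{equation*}
\frac{f(x_0 + h) - f(x_0)}{h} - f'(x_0) = \frac{h}{2\pi\mathrm{i}}\oint_{|\zeta - x_0| = r} \frac{f(\zeta)}{(\zeta - x_0)^2(\zeta - x_0 - h)}\, d\zeta,
\end{equation*}
valid for $|h| < r/2$. The right-hand side is an $X$-valued Riemann integral by the $X$-continuity established above, with $X$-norm bounded by $C|h| \cdot \sup_{|\zeta - x_0|=r}\|f(\zeta)\|_X$, which is finite by local $X$-boundedness. Letting $h \to 0$ gives the claimed $X$-convergence.

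The main obstacle is the Dunford-type weak-to-strong step, i.e., upgrading weak $X$-boundedness (immediate from scalar analyticity) to strong $X$-boundedness on a compact contour. The norming subtleties here are delicate in full generality, since $Y^*|_X$ is a priori norming only for the weaker $Y$-norm on $X$; however, in the concrete setting of Sobolev-type spaces in which the lemma is applied, this step can be verified directly using the Fourier coefficient description of the relevant norms.
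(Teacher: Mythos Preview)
Your approach coincides with the paper's in its essential strategy: derive the Cauchy integral representation from $Y$-holomorphy, then argue that the resulting contour integrals make sense in $X$ and can be differentiated there. The paper carries out the Cauchy--Goursat step from scratch for triangular contours (its Proposition~\ref{prop}) rather than citing the $Y$-valued Cauchy formula as standard, but this is only a cosmetic difference.

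Where you differ is in being more explicit about the one genuine subtlety. After establishing the Cauchy formula in $Y$, the paper simply asserts that one can ``compute the derivative with respect to $\epsilon$ in space $X$'' to obtain $f'(\epsilon)=\frac{1}{2\pi\mathrm{i}}\int_\gamma \frac{f(z)}{(z-\epsilon)^2}\,dz$, without saying why this integral exists as an $X$-valued Riemann integral. That step requires $f$ to be $X$-integrable along $\gamma$ (e.g.\ $X$-continuous, or at least locally $X$-bounded), which is not among the stated hypotheses. Your Dunford-type weak-to-strong argument is exactly the right tool here, and you correctly flag that $Y^*|_X$ need not be norming for the $X$-norm in full generality, so the step is delicate. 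In the concrete Sobolev setting $X=H^{\rho,m}$, $Y=H^{\rho,m-\tau}$ where the lemma is actually invoked (Lemma~\ref{diff1} and Proposition~\ref{diff2}), local $X$-boundedness of $\varepsilon\mapsto\varepsilon\mathcal{L}_\varepsilon^{-1}V_\varepsilon$ follows directly from the uniform multiplier bound $\|\varepsilon\mathcal{L}_\varepsilon^{-1}\|_{\rho,m}\le C_{\lambda,\mu}$, so both your argument and the paper's close without difficulty there.
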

We start proving the Cauchy-Goursat theorem for functions satisfying \eqref{ydiff}. The proof is rather straightforward. 
This will lead to a Cauchy formula, from which we can deduce \eqref{xdiff}.

\begin{proposition}\label{prop}
Let $g:U\rightarrow X\subseteq Y$, be differentiable at everywhere in the sense of $Y$ differentiable. Let $\gamma$ be a triangle contour contained in $U$. Then 
\begin{equation*}
\int_{\gamma}g(z)dz=0.	
\end{equation*}
\end{proposition}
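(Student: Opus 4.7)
The plan is to carry out the classical Goursat bisection argument, working throughout in the weaker $Y$-norm in which $g$ is differentiable. Since $Y$-differentiability implies $Y$-continuity, the contour integral $\int_{\gamma} g(z)\,dz$ is well-defined as a $Y$-valued Riemann integral of a continuous $Y$-valued function along the compact curve $\gamma$; vanishing of this integral in $Y$ is the same statement as vanishing in $X$, since the two norms agree on the range of $g$ up to the embedding.

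First, I would subdivide the triangle $\gamma$ into four congruent sub-triangles $\gamma^{(1)},\ldots,\gamma^{(4)}$ by joining the midpoints of its sides, giving each the orientation induced from $\gamma$ so that the interior edges cancel pairwise. This yields the identity $\int_\gamma g = \sum_{i=1}^{4} \int_{\gamma^{(i)}} g$ in $Y$, and the $Y$-triangle inequality picks out at least one sub-triangle, call it $T_1$, with $\bigl\|\int_{T_1} g\bigr\|_Y \geq \tfrac14 \bigl\|\int_\gamma g\bigr\|_Y$. Iterating produces a nested sequence $\gamma = T_0 \supset T_1 \supset T_2 \supset \cdots$ of closed triangles similar to $\gamma$, with diameter $d/2^n$, perimeter $p/2^n$, and $\bigl\|\int_{T_n} g\bigr\|_Y \geq 4^{-n}\bigl\|\int_\gamma g\bigr\|_Y$.

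By compactness there is a point $z_0 \in \bigcap_n T_n \subset U$. Given $\varepsilon > 0$, $Y$-differentiability at $z_0$ supplies a disk $D(z_0,\delta)$ on which $\|g(z) - g(z_0) - g'(z_0)(z - z_0)\|_Y \leq \varepsilon\,|z - z_0|$. For $n$ large enough $T_n \subset D(z_0,\delta)$; since $\int_{T_n}dz = 0$ and $\int_{T_n} z\,dz = 0$ in $\CC$ (both integrands are exact), pulling out the fixed $Y$-vectors $g(z_0)$ and $g'(z_0)$ gives $\int_{T_n}[g(z_0) + g'(z_0)(z-z_0)]\,dz = 0$ in $Y$. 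Therefore
\[
\Bigl\|\int_{T_n} g(z)\,dz\Bigr\|_Y \leq \int_{T_n} \|g(z) - g(z_0) - g'(z_0)(z - z_0)\|_Y \,|dz| \leq \varepsilon \cdot \frac{d}{2^n} \cdot \frac{p}{2^n} = \frac{\varepsilon\, d p}{4^n}.
\]
Combining with the lower bound yields $\bigl\|\int_\gamma g\bigr\|_Y \leq \varepsilon\, d p$ for every $\varepsilon > 0$, whence the integral vanishes.

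The main obstacle is essentially bookkeeping rather than conceptual: one must verify that the elementary identities used (additivity over subdivisions, exactness of $1$ and $z$ against closed contours with fixed $Y$-vector coefficients, and the vector-valued triangle inequality $\bigl\|\int h\,dz\bigr\|_Y \leq \int \|h\|_Y\,|dz|$) transfer intact from the scalar Goursat argument to $Y$-valued Riemann integrals of $Y$-continuous integrands. These are standard consequences of uniform convergence of Riemann sums of continuous Banach-space-valued functions on the compact contour, but deserve to be stated carefully here since the ambient setting is vector-valued; once they are in hand, the classical scalar Goursat proof transposes verbatim.
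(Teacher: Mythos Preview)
Your proposal is correct and follows essentially the same approach as the paper: both carry out the classical Goursat bisection argument in the $Y$-norm, subdividing into four congruent sub-triangles, extracting a nested sequence with the $4^{-n}$ lower bound, locating the limit point $z_0$, and using $Y$-differentiability at $z_0$ together with $\int_{T_n}1\,dz=\int_{T_n}z\,dz=0$ to obtain a competing upper bound that forces the integral to vanish. The only cosmetic difference is that the paper frames the endgame as a contradiction (assuming $\int_\gamma g=b\neq 0$) while you use a direct $\varepsilon$-argument; your explicit remarks on the vector-valued bookkeeping are a welcome addition but do not change the substance.
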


Of course, by the usual approximation procedures, one 
can get the result for more general paths. This will not be needed for our purposes. Note that, by the  fact that  $g$ is continuous as a function from $U$ to $Y$, we know that the integrals over paths involved can be understood as Riemann integrals.
\begin{proof}
 Suppose $\gamma$ is a triangular contour with positive orientation, we construct four positively oriented contours that are triangles obtained by  joining the midpoints of the sides of $\gamma$. Then, we have 
\begin{equation*}
\int_{\gamma}g(z)dz=\sum_{i=1}^{4}\int_{\gamma_i}g(z)dz.
\end{equation*}
Let $\gamma_1$ be selected such that
\begin{equation*}
\left| \int_{\gamma}g(z)dz\right| \leq\sum_{i=1}^{4}\left| \int_{\gamma_i}g(z)dz\right| \leq
4\left| \int_{\gamma_1}g(z)dz\right|. 
\end{equation*}
If $\int_{\gamma}g(z)dz=b\neq 0$, we get 
\begin{equation*}
\left| \int_{\gamma_1}g(z)dz\right|\geq \frac{1}{4} |b|.	
\end{equation*}
Proceeding by induction, we get a sequence of triangular contours $\{\gamma_n\}$, whose length equals $2^{-n}|\gamma|$, where $|\gamma|$ denotes the length of $\gamma$, such that
 \begin{equation}\label{lowerbound}
 \left| \int_{\gamma_n}g(z)dz\right|\geq \frac{1}{4^n} |b|.	
 \end{equation}
By the choice of $\gamma_n$, we have 
\begin{equation*}
\overline{Interior\, of\,
	 \gamma_{n+1}}\subset
 \overline{Interior\, of\,
 	\gamma_{n}}
\end{equation*}
and the length of  the sides of $\gamma_n$ goes to $0$ as $n\rightarrow \infty$. Therefore there exists a unique point 
$z_0\in \bigcap_{n}  \overline{Interior\, of\,
	\gamma_{n}}\in U$. 

Since $g$ is differentiable at $z_0$, there is a function $R$ such that 
 \begin{equation*}
 	g(z)=g(z_0)+g'(z_0)(z-z_0)+R(z,z_0),
 \end{equation*}
 where 
 \begin{equation*}
 \|R(z,z_0)\|_{Y} \leq |z-z_0|w(|z-z_0|)
 \end{equation*}
 with $w(|z-z_0|) \rightarrow 0$ when $|z-z_0|\rightarrow 0$.
Integrating $g$ along $\gamma_n$, we find that
\begin{equation*}
\begin{split}
\int_{\gamma_n}g(z)dz&=\int_{\gamma_n}g(z_0)dz+
\int_{\gamma_n}g'(z_0)(z-z_0)dz+\int_{\gamma_n}R(z,z_0)dz\\
&=[g(z_0)-g'(z_0)z_0]\int_{\gamma_n}1dz+
g'(z_0)\int_{\gamma_n}zdz+\int_{\gamma_n}R(z,z_0)dz
\\
&=\int_{\gamma_n}R(z,z_0)dz.
\end{split}
\end{equation*}
Therefore,
\begin{equation}\label{upperbound}
\begin{split}
\left\|\int_{\gamma_n}g(z)dz\right\|_{Y}&\leq
|\gamma_n|\cdot\sup_{z\in \gamma_n}\|R(z,z_0)\|_{Y}\\
&\leq |\gamma_n|\cdot \frac{|\gamma_n|}{2}\cdot
w\left(\frac{|\gamma_n|}{2}\right)\\
&\leq
\frac{|\gamma|^2}{2\cdot 4^n}w\left(\frac{|\gamma_n|}{2}\right)
\end{split}
\end{equation}
by $|z-z_0|<\frac{1}{2}|\gamma_n|$ for $z \in \gamma_n$. Comparing \eqref{lowerbound} and \eqref{upperbound}, we get
$b=0$ as desired.
\end{proof}
As a corollary, we obtain the same conclusion, but assuming only that $g$ is differentiable at all points inside of the triangle except for the center of the small triangles.

 Now we begin to prove Lemma~\ref{bootstrap}. As it is standard, for the function $f$ in Lemma~\ref{bootstrap}, fix $\epsilon$ belonging to interior of $\gamma$, we define 
\begin{equation*}
\begin{split}
g_{\epsilon}(z)=\left\{
\begin{array}{l}
\begin{split}
\frac{f(z)-f(\epsilon)}{z-\epsilon},\,\,z\neq \epsilon,
\end{split}
\\
\\
\begin{split}
f'(z),\,\,z=\epsilon,
\end{split}
\end{array}
\right.
\end{split}
\end{equation*}
which satisfies the hypothesis of the Proposition~\ref{prop} or its corollary. If $\gamma$ is an triangle centered at $\epsilon$, then
\begin{equation*}
0=\int_{\gamma}g_{\epsilon}(z)dz=
\int_{\gamma}\frac{f(z)}{z-\epsilon}dz-f(\epsilon)\int_{\gamma}\frac{1}{z-\epsilon}dz.
\end{equation*}
Hence we satisfy the formula
\begin{equation*}
f(\epsilon)=\frac{1}{2\pi\mathrm{i}}
\int_{\gamma}\frac{f(z)}{z-\epsilon}dz.
\end{equation*}
Now, we can compute the derivative with respect to $\epsilon$ in space $X$
and obtain 
 \begin{equation*}
 f'(\epsilon)=\frac{1}{2\pi\mathrm{i}}
 \int_{\gamma}\frac{f(z)}{(z-\epsilon)^2}dz.
 \end{equation*}
 Of course, since the derivative is obtained as limits of quotients, if the limit exists in $X$, it has to agree with the limit in $Y$.

\subsection{Finitely differentiable functions in Banach space}
For arbitrary Banach spaces $X_1, \cdots, X_i, Y, i\geq 1$, we denote by $A(X^{\otimes i}, Y)$ the space of symmetric continuous $i$-linear
 forms on $X^{\otimes i}:=X_1\times \cdots \times X_i$ 
taking values in $Y$. 
Now we present the converse to Taylor's theorem (see page $6$ in the book \cite{robbin67}).
\begin{lemma}
 Let $O\subset X$ be a convex set and $F:\,O\rightarrow Y,\,f_{i}:\,O\rightarrow A(X^{\otimes i}, Y),\,i=0,\cdots,r.$ For any $x\in O$ and $h\in X$
such that $(x+h)\in O$, we define $R(x,h)$ by 
\begin{equation*}
 F(x+h)=F(x)+\sum_{i=1}^r\frac{f_{i}(x)(h,\cdots,h)}{i!}+R(x,h).
\end{equation*}
If for any $0\leq i\leq r$, $f_{i}$ is continuous and for any $x\in O$, $\frac{\|R(x,h)\|_{Y}}{\|h\|^r_{X}}\rightarrow 0$ as 
$\|h\|^r_{X}\rightarrow 0$, then we say $F$ is of class $C^r$ on $O$ and $D^iF=f_i$ for any $0\leq i\leq r$.

\end{lemma}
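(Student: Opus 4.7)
The plan is to proceed by induction on $r$. The base cases are essentially tautological: for $r=0$ the hypothesis reduces to $\|R(x,h)\|_Y\to 0$ as $\|h\|_X\to 0$, which (with the convention $f_0=F$) is continuity of $F$; for $r=1$ the expansion $F(x+h)=F(x)+f_1(x)h+R(x,h)$ with $\|R(x,h)\|_Y/\|h\|_X\to 0$ is Fréchet differentiability at each $x\in O$ with $DF(x)=f_1(x)$, and continuity of $f_1\colon O\to A(X,Y)$ upgrades this to $F\in C^1(O,Y)$.

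For the inductive step from $r-1$ to $r$, the strategy is to apply the inductive hypothesis to the derivative $f_1=DF\colon O\to A(X,Y)$, so it suffices to exhibit for $f_1$ an order-$(r-1)$ Taylor expansion with continuous coefficients. The main computation is a two-way comparison: evaluate $F(x+h+tv)$ both via the hypothesized expansion centered at $x$ with increment $h+tv$ and via the expansion centered at $x+h$ with increment $tv$, and subtract. This yields the identity
\begin{equation*}
Q_h(t)-P_h(t) \;=\; R(x,h+tv)-R(x+h,tv),
\end{equation*}
where $Q_h(t)=\sum_{i=0}^{r}\tfrac{t^{i}}{i!}f_i(x+h)(v^i)$ and $P_h(t)=\sum_{i=0}^{r}\tfrac{1}{i!}f_i(x)\bigl((h+tv)^{i}\bigr)$ are polynomials in $t$ of degree at most $r$. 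Matching the coefficient of $t^{1}$ on the two sides produces the candidate expansion
\begin{equation*}
f_1(x+h)v \;=\; \sum_{i=1}^{r}\frac{1}{(i-1)!}\,f_i(x)\bigl(h^{i-1},v\bigr) + \rho(x,h,v),
\end{equation*}
which, after reinterpreting each coefficient as an element of $A(X^{\otimes l},A(X,Y))$ under the canonical isomorphism with $A(X^{\otimes(l+1)},Y)$, is exactly an order-$(r-1)$ Taylor expansion for $f_1$ with continuous coefficients $\tilde f_{l}(x)\cong f_{l+1}(x)$.

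The principal technical obstacle is the remainder estimate $\|\rho(x,h,\cdot)\|_{A(X,Y)}=o(\|h\|_X^{r-1})$ required to invoke the inductive hypothesis. To extract the $t^{1}$ coefficient on the right-hand side of the identity, I will use the elementary fact that on the finite-dimensional space of polynomials of degree $\le r$ on $[0,1]$ the coefficient-of-$t^{1}$ functional is bounded by a constant $C_r$ times the $L^{\infty}[0,1]$ norm. Combined with the little-$o$ hypothesis on $R$, this gives, after restricting to $\|v\|_X\le\|h\|_X$ and exploiting linearity in $v$, the required bound $\|\rho(x,h,\cdot)\|_{A(X,Y)}\le C_r\,\phi(\|h\|_X)\|h\|_X^{r-1}$ with $\phi(s)\to 0$. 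Convexity of $O$ is used here solely to keep the segment $x+h+tv$, $t\in[0,1]$, inside $O$ for $h,v$ sufficiently small. The induction then closes, yielding $f_1\in C^{r-1}$ with $D^{l}f_1=\tilde f_{l}$, equivalently $D^{l+1}F=f_{l+1}$ for $l=0,\ldots,r-1$; combined with $D^0F=f_0$ and $DF=f_1$, this is the desired conclusion.
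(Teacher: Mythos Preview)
The paper does not actually prove this lemma; it states the result and refers the reader to page~6 of Abraham--Robbin \cite{robbin67}. Your strategy --- induction on $r$, with the inductive step carried out by expanding $F(x+h+tv)$ about the two centers $x$ and $x+h$ and reading off the $t^{1}$ coefficient of the resulting polynomial identity --- is exactly the approach of that reference, and your identification of $c_1$ with $\rho(x,h,v)$ is correct.

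There is, however, a genuine gap in your remainder estimate. You assert that
\[
\sup_{t\in[0,1]}\bigl\|R(x,h+tv)-R(x+h,tv)\bigr\|=o(\|h\|^{r})
\quad(\|v\|\le\|h\|)
\]
follows ``from the little-$o$ hypothesis on $R$''. For $R(x,h+tv)$ this is fine: the base point $x$ is fixed and $\|h+tv\|\le 2\|h\|$. The term $R(x+h,tv)$ is the problem: its base point $x+h$ moves with $h$, and the hypothesis $\|R(y,k)\|/\|k\|^{r}\to 0$ is only pointwise in $y$. Worse, since
\[
R(x,h+tv)-R(x+h,tv)=p(t)=\sum_{j=0}^{r}\frac{t^{j}}{j!}\bigl[f_j(x+h)-\tilde f_j(x,h)\bigr](v^{j}),
\]
bounding $\sup_{t\in[0,1],\|v\|\le\|h\|}\|p(t)\|$ by $o(\|h\|^{r})$ is \emph{equivalent} to the family of estimates $\|f_j(x+h)-\tilde f_j(x,h)\|=o(\|h\|^{r-j})$ for all $j$ --- and for $j=1$ this is precisely the conclusion $\|\rho(x,h,\cdot)\|=o(\|h\|^{r-1})$ you are trying to establish. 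So the step, as written, is circular: the only coefficients for which the bound is immediate are $j=0$ (from $R(x,h)=o(\|h\|^{r})$) and $j=r$ (from continuity of $f_r$), while the intermediate $1\le j\le r-1$ are exactly the unknowns.

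The full argument in Abraham--Robbin closes this using the continuity of the $f_i$ in a more essential way than merely checking that the candidate coefficients $\tilde f_l$ are continuous; one needs either a simultaneous treatment of all the $b_j$ via a suitably scaled polynomial lemma, or to exploit the additional relation $p(t)=R(x,h+tv)+o(t^{r})$ (coming from $R(x+h,tv)=o(t^{r})$ at the \emph{fixed} point $x+h$) together with the known endpoints $b_0=R(x,h)$ and $b_r=\tfrac{1}{r!}[f_r(x+h)-f_r(x)](v^{r})$. Your outline is the right one, but this step is where the real work lies and deserves an explicit argument rather than an appeal to the pointwise little-$o$.
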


\begin{definition}
We denote by $C^{r}(O,Y)$
the space of functions $f:O \rightarrow Y$ with continuous derivatives up to order to $r$. We endow
$C^{r}(O, Y )$ with the norm of the supremum of all the
derivatives. Namely,
\begin{equation}\label{normCr}
\begin{aligned}
\|f\|_{C^{r}}=\max_{0\leq i\leq r}
\sup_{x\in O}|[D^{i}f](x)|_{X^{\otimes i}, Y}
\end{aligned}
\end{equation}
with
\begin{equation*}
\begin{aligned}
|\cdot|_{X^{\otimes i}, Y} \equiv \sup_{\|x_1\|_{X_1}=1,\ldots \|x_i\|_{X_i}=1}
\|A(x_1,\ldots, x_i)\|_Y.
\end{aligned}
\end{equation*}
As it is well known,
the norm \eqref{normCr}
makes $C^{r}(O,Y)$ a Banach space. 
\end{definition} 
\begin{definition}
	We denote by $C^{r+Lip} ( O,  Y )$
	the space of functions in  $C^{r}(O, Y )$
	whose $r-$th derivative is Lipschitz. The Lipschitz constant is
	\begin{equation*}
	\begin{aligned}
	Lip_{O,Y}D^{r}f=\sup_{x_1,\,x_2 \in X \atop x_1\neq x_2}
	\frac{|D^{r}f(x_1)-D^{r}f(x_2)
		|_{X^{\otimes r},Y}}{\|x_1-x_2\|_{X}}.
	\end{aligned}
	\end{equation*}
\end{definition}

We note that since $O$ may be not compact, this definition is
different from the Whitney definition in which the topology is
given by semi-norms of supremum in compact sets. We will not use
the Whitney definition of $C^r$ in this paper.

\begin{definition}
An open set $O$ is called a compensated domain  if   there is a constant $C$ such that
given $x, y \in O$ there is a $C^1$ path $\gamma$  contained in $O$
joining $x,y$ satisfying $|\gamma| \le C \| x - y\|$.
\end{definition}

For $O$ a  compensated domain, we have the mean value theorem
\begin{equation*}
\| f(x) - f(y) \|_Y  \le C \| f\|_{C^1(O, Y)} \| x - y\|_X.
\end{equation*}
   In particular, $C^1$ functions in  a compensated domain are
Lipschitz. It is not difficult to construct non-compensated
domains with $C^1$ functions which are not Lipschitz.

Of course a convex set is compensated and the compensation constant is
$1$. In our paper, we will just be considering domains which are
balls or full spaces. See \cite{Rafael99} for the effects of
the compensation constants in many problems of
the function theory.

\subsection{The standard Sobolev space}
As a matter of fact, we define 
\begin{equation*}
\begin{aligned}
H^m(\mathbb{T}^d):=H^m(\mathbb{T}^d,\mathbb{R}^n):=
\{U=(U_1,\cdots,U_n)|U_i\in H^m(\mathbb{T}^d,\mathbb{R}),\,i=1,\cdots,n\}
\end{aligned}
\end{equation*}
equipped with the norm 
\begin{equation}\label{sobol}
\begin{aligned}
\|U\|_{H^m}=\sum_{0\leq i\leq n}\|U_i\|_{H^m}.
\end{aligned}
\end{equation}
And 
\begin{equation*}
\begin{aligned}
H^m(\mathbb{T}^d,\mathbb{R})=
\{U\in L^2(\mathbb{T}^d,\mathbb{R}):\,D^{|\alpha|} U\in L^2(\mathbb{T}^d,\mathbb{R}), \,\,\,0\leq |\alpha|\leq m\},
\end{aligned}
\end{equation*}
where we use multi-index notation $\alpha=(\alpha_1,\cdots,\alpha_d)\in \mathbb{N}^d$, $|\alpha|=\sum_{i=1}^d \alpha_i$ and  $x=(x_1,\cdots,x_d)\in \mathbb{T}^d$,  $D^{\alpha}:=D^{\alpha}_x=D_{x_1}^{\alpha_1}\cdots D_{x_d}^{\alpha_d}$.
We define 	
\begin{equation*}
\begin{aligned}
\|U\|_{H^m(\mathbb{T}^d,\mathbb{R})}=\sum_{0\leq |\alpha|\leq
	m}\|D^{\alpha} U\|_{L^2}
\end{aligned}
\end{equation*}
with 
\begin{equation*}
\begin{aligned}
\|U\|_{L^2}=\left(\int_{\mathbb{T}^d}|U(\theta)|^2d\theta\right)^{\frac{1}{2}}.
\end{aligned}
\end{equation*}
Indeed, by Fourier transformation, the norm defined in \eqref{sobol} is equivalent to the norm defined by 
Definition~\ref{space} based on the Fourier coefficients. We refer to the book \cite{sobolev, taylor3} for more details.

\bibliographystyle{alpha}
\bibliography{modi-liouvillean-llave}

\end{document}